\documentclass[10pt]{amsart}
\usepackage{graphicx}
\usepackage[margin=1.305in,tmargin=1.305in,bmargin=1.1in,headsep=.75cm]{geometry}
\usepackage[normalem]{ulem}

\usepackage[utf8]{inputenc}
\usepackage[dvipsnames]{xcolor}
\usepackage{amsfonts, amstext, amsmath, amsthm, amscd, amssymb}
\usepackage{mathrsfs}
\usepackage{mathtools}
\usepackage{tikz}
%\usetikzlibrary{cd}
\usepackage{url}
\usepackage{multirow,arydshln}
\usepackage[shortlabels]{enumitem}

\usepackage{wrapfig}
\usepackage[font=small]{caption}
\usepackage{subcaption}
\captionsetup[subfigure]{labelfont=rm}

\usepackage{hyperref}
%\hypersetup{ colorlinks, citecolor=Red, linkcolor=Red, urlcolor=Blue}

%\definecolor{newblue}{rgb}{0.22, 0.41, 0.7}
%\backup: 
\definecolor{newblue}{rgb}{0.27, 0.45, 0.8}

%not this one\definecolor{newblue}{rgb}{0.0, 0.58, 0.71}

\hypersetup{colorlinks=true,linkcolor=newblue,citecolor=newblue, urlcolor=black}

\usepackage{enumitem}

\renewcommand{\setminus}{{\smallsetminus}}

\newcommand{\be}{\begin{equation}}
\newcommand{\ee}{\end{equation}}
\newcommand{\ol}[1]{\overline{#1}}

\newcommand{\alt}[1]{#1^{\star}}
%\text{\tiny$\bullet$}
%\ddot \smile 

   %large medium frac
 % small medium frac
 % tiny medium frac

%%% Numbering
\numberwithin{equation}{section}

%%% Theorems
\theoremstyle{plain}
\newtheorem{theorem}[equation]{Theorem}
\newtheorem{lemma}[equation]{Lemma}
\newtheorem{claim}{Claim}
\newtheorem{proposition}[equation]{Proposition}

\newtheorem{corollary}[equation]{Corollary}

\newtheorem*{nkbrunnian}{Theorem~\ref{thm:intro-nk-brunnian}}
\newtheorem*{intro-any-no-components}{Theorem~\ref{thm:intro-any-no-components}}
\newtheorem*{thm:intro-surface-links}{Theorem~\ref{thm:intro-surface-links}}

\newtheorem{lem}[equation]{Lemma}
\newtheorem{prop}[equation]{Proposition}

\theoremstyle{definition}

\newtheorem{definition}[equation]{Definition}
\newtheorem*{definition*}{Definition}

\newtheorem{procedure}[equation]{Procedure}

\theoremstyle{remark}
\newtheorem{remark}[equation]{Remark}

%\numberwithin{equation}{section}

%\renewcommand{\thefigure}{\arabic{section}.\arabic{equation}}

%\newtheoremstyle{TheoremNum}
%        {}{}              %%% space between body and thm
%        {\itshape}                      %%% Thm body font
%        {}                              %%% Indent amount (empty = no indent)
%        {\bfseries}                     %%% Thm head font
%        {.}                             %%% Punctuation after thm head
%        { }                             %%% Space after thm head
%        {\thmname{#1}\thmnote{ \bfseries #3}}%%% Thm head spec
%\theoremstyle{TheoremNum}
%\newtheorem{propn}{Proposition}
%\newtheorem{corn}{Corollary}
%\newtheorem{thmn}{Theorem}
%\newtheorem{lemman}{Lemma}

%\newtheorem*{acknowledgements}{Acknowledgements}

\def\N{\mathbb N}
\def\Z{\mathbb Z}
\def\R{\mathbb R}

\def\F{\mathbb{F}}
\def\wt#1{\widetilde{#1}}

\def\sm{\setminus}
\def\S{\Sigma}

\def\wt{\widetilde}

\def\bp{\begin{pmatrix}}
\def\ep{\end{pmatrix}}
\def\ba{\begin{array}}
\def\ea{\end{array}}
\def\bn{\begin{enumerate}}
\def\en{\end{enumerate}}

\DeclareMathOperator\Wh{Wh}
\DeclareMathOperator\BD{BD}

\DeclareMathOperator\Id{Id}

\DeclareMathOperator\coker{coker}

\DeclareMathOperator\CAT{CAT}
\DeclareMathOperator\DIFF{DIFF}
\DeclareMathOperator\TOP{TOP}
\DeclareMathOperator\HFK{{\emph{HFK}}}
\DeclareMathOperator\HFL{{\emph{HFL}}}

\DeclareMathOperator\DLink{{DLink}}
\DeclareMathOperator\Vect{{Vect}}
\DeclareMathOperator\spin{Spin}

%\makeatletter
%\newlength{\@thlabel@width}%
%\newcommand{\thmenumhspace}{\settowidth{\@thlabel@width}{\itshape1.}\sbox{\@labels}{\unhbox\@labels\hspace{\dimexpr-\leftmargin+\labelsep+\@thlabel@width-\itemindent}}}
%\makeatother

\newcommand{\red}{\textcolor{red}}
\newcommand{\blue}{\textcolor{blue}}

\usepackage{pinlabel}
\definecolor{darkpurple}{rgb}{.5,0,.5}
\definecolor{darkteal}{rgb}{0,.5,.5}
\definecolor{darkgreen}{rgb}{.15,.5,.35}

%\newcommand{\AK}[1]{[AK] \textcolor{red}{#1}}

%% ------------------------------------------------------------

\overfullrule=10mm %MM -- this is just me checking for overfull hboxes
\begin{document}

\title{Brunnian exotic surface links in the 4--ball}

\author[Hayden]{Kyle Hayden}
\address{Department of Mathematics, Rutgers University-Newark, USA}
\email{kyle.hayden@rutgers.edu}

\author[Kjuchukova]{Alexandra Kjuchukova}
\address{Department of Mathematics, University of Notre Dame, USA}
\email{akjuchuk@nd.edu}

\author[Krishna]{Siddhi Krishna}
\address{Department of Mathematics, Columbia University, USA}
\email{siddhi@math.columbia.edu}

\author[Miller]{\\ Maggie Miller}
\address{Department of Mathematics, University of Texas at Austin, USA}
\email{maggie.miller.math@gmail.com}

\author[Powell]{Mark Powell}
\address{School of Mathematics and Statistics, University of Glasgow, United Kingdom}
\email{mark.powell@glasgow.ac.uk}

\author[Sunukjian]{Nathan Sunukjian}
\address{Department of Mathematics and Statistics, Calvin University, USA}
\email{nss9@calvin.edu}

%\thanks{}

%\def\subjclassname{\textup{2020} Mathematics Subject Classification}
%\expandafter\let\csname subjclassname@1991\endcsname=\subjclassname
%\expandafter\let\csname subjclassname@2000\endcsname=\subjclassname
%\subjclass{%
%}
%\keywords{}

\def\subjclassname{\textup{2020} Mathematics Subject Classification}
\expandafter\let\csname subjclassname@1991\endcsname=\subjclassname
%\expandafter\let\csname subjclassname@2000\endcsname=\subjclassname
\subjclass{
57K40, %General topology of 4-manifolds
57K45 (primary);
%57K10, % Knot theory
57N35 (secondary). % Embeddings and immersions in topological manifolds
%57N70, % Cobordism and concordance in topological manifolds
%57R67. % surgery obstructions; Wall groups
}
%\keywords{Exotic surface links, Brunnian. }

\thanks{
KH was partially supported by NSF grants DMS-1803584 and DMS-2114837. SK was partially supported by NSF grant DMS-1745583. MM was supported by NSF grant DMS-2001675 and a fellowship from the Clay Mathematics Institute. MP was partially supported by EPSRC New Investigator grant EP/T028335/2 and EPSRC New Horizons grant EP/V04821X/2. }

\begin{abstract}
This paper investigates exotic phenomena exhibited by links of disconnected surfaces with boundary 
  properly embedded in the 4-ball. Our main results provide two different constructions of exotic pairs of surface links which are Brunnian, meaning that all their proper sublinks 
  are trivial. Furthermore, we modify these core constructions to vary the number of components in the exotic links, the genera of the components, and the number of components that must be removed before the surfaces become unlinked. 
\end{abstract}

\maketitle

\section{Introduction} \label{section:Introduction}

We construct exotic pairs of surface links in the 4-ball where every sublink is smoothly trivial. A \emph{surface link} is a smooth,  oriented, properly embedded, 2--dimensional submanifold $\Sigma \subseteq B^4$ whose connected components $\Sigma_i$ each have exactly one boundary component, which lies in $S^3$. When~$\Sigma$ is connected, it is called a \emph{surface knot}. If each component $\Sigma_i$ is a disk, we say that $\Sigma$ is a \emph{disk link} (or \emph{disk knot}, in the connected case).
We will say that two surface links $\Sigma$ and $\Sigma'$ with $\partial \Sigma = \partial \Sigma'$  form an \emph{exotic pair} if they are topologically ambiently isotopic rel.~boundary, but there is no ambient diffeomorphism of $W$ carrying $\Sigma$ to $\Sigma'$.

A fundamental open question in the study of surfaces in 4-manifolds asks whether there exists an orientable, exotic pair of surfaces in $S^4$. %Though exotic behavior is well-studied for closed surfaces, even the following basic question remains open: \textit{are there exotic closed, orientable surfaces in $S^4$?} In contrast, e
Exotic pairs of orientable surfaces have been constructed in many other 4--manifolds $X$ with $b_2(X)>0$; see e.g.\ the foundational paper by Fintushel--Stern \cite{fintushelstern}. In a related vein, $S^4$ is known to contain exotic nonorientable surfaces \cite{KreckNonorient}. However, these results rely on the topology of the ambient $4$--manifold or nonorientability of the surfaces in an essential way, such as to support the use of gauge-theoretic invariants.

%
%To detect exotic phenomena, it can be helpful to work in the relative setting. This motivates the study of properly embedded surfaces. 
Juh\'asz--Miller--Zemke  \cite{JuhaszMillerZemke} and Hayden \cite{Hayden} produced exotic, orientable, connected surface knots. This paper studies surface links, and exhibits increasingly subtle forms of exotic behavior.  
Specifically, we uncover exotic behavior among a particularly delicate class of surface links which are analogous to Brunnian links in $S^3$~\cite{Brunn}.

%, using significantly different techniques. Hayden constructed, for every $g \geq 0$, exotic pairs of properly embedded genus $g$ surfaces, while Juh\'asz--Miller--Zemke exhibited an infinite family of properly embedded genus $g\ge 1$ surfaces, any two of which form an exotic pair. 
%The main objects of study in the relative setting are surface links.
 
% \begin{definition*}
%  \hangindent \leftmargini
% \textup{(a)} \hskip \labelsep A surface link $\Sigma \subseteq B^4$ is \emph{unlinked} if $\partial \Sigma$ is an unlink in $S^3$ and $\Sigma$ is smoothly ambiently isotopic to a Seifert surface for the unlink.
% \begin{enumerate}[(a)] \setcounter{enumi}{1}
%     \item A surface link $\Sigma \subseteq B^4$ is \emph{Brunnian} if removing any component from $\Sigma$ yields an  unlinked surface. 
% \end{enumerate}
%  \end{definition*}
 
% \begin{definition*} \leavevmode
% \begin{enumerate}[(a)] 
% \item A surface link $\Sigma \subseteq B^4$ is \emph{unlinked} if $\partial \Sigma$ is an unlink in $S^3$ and $\Sigma$ is smoothly ambiently isotopic to a Seifert surface for the unlink.
%     \item A surface link $\Sigma \subseteq B^4$ is \emph{Brunnian} if removing any component from $\Sigma$ yields an  unlinked surface. 
% \end{enumerate}
%  \end{definition*}
 
 \begin{definition*}\leavevmode
 \begin{enumerate}[label=\alph*)]
\item A surface link $\Sigma \subseteq B^4$ is \emph{unlinked} if $\partial \Sigma$ is an unlink in $S^3$ and $\Sigma$ is smoothly  isotopic to a Seifert surface for the unlink in $S^3$.
\item A surface link $\Sigma \subseteq B^4$ is \emph{Brunnian} if removing any component from $\Sigma$ yields an  unlinked surface link. 
\end{enumerate}
 \end{definition*}

Our notion of Brunnian differs from the usual convention in 3-dimensions, where a Brunnian link is required to be non-split. According to our definition, an unlink is Brunnian.

 %The next theorem is the main result of this work. 

%In this language, Hayden produced exotic pairs of disk and surface knots, and Juh\'asz--Miller--Zemke found countably infinite families of pairwise exotic surface knots. 
%We extend these results to study exotic linking of surfaces in $B^4$. To highlight how linking plays an important role for exotic surfaces,

%\begin{definition}
%\end{definition}

%These are analogues of Brunnian links in $S^3$, introduced in the 19th century by Brunn~\cite{Brunn}. %To study  surface links, we leverage two other techniques originating in 3--dimensional knot theory: constructing surfaces via satellite operations (especially Bing doubling) and distinguishing surface links using covering surfaces, 
%\SK{(inspired by \cite{Cha-Kim})}.
% Theorem 1.4 was getting cut between pages so I tried to condense this phrase:
%MP: I found some other gratuitous words. 
%a technique that for us was inspired by Cha--Kim \cite{Cha-Kim}.

\begin{theorem}\label{thm:intro-any-no-components}
Let $n \geq 2$ be an integer. 
%\begin{list}{}{\leftmargin=1em \itemindent=0em}
\begin{enumerate}[$(i)$]
  \item\label{item:thm-intro-any-no-comps-i} There exists an exotic pair of $n$--component Brunnian disk links  in $B^4$.
  % forming an exotic pair.
  \item\label{item:thm-intro-any-no-comps-ii} There exists an infinite family of pairwise exotic $n$--component Brunnian surface links in $B^4$ that each consist of a single genus-one surface and $n-1$ disks.  %There exists a collection of $n$--component surface links $\{\Sigma_n(J)\mid  J \subseteq S^3 \text{ a knot}\}$  in $B^4$ consisting of a genus one surface and $n-1$ disks, such that if $\Delta_J(t)$ and $\Delta_{J'}(t)$ have different numbers of irreducible factors, then $\Sigma(J)$ and  $\Sigma(J')$ form an exotic pair.
\end{enumerate}
\end{theorem}

%We prove this theorem first for $n=2$ by Bing doubling (see Section \ref{background-bing-doubling}) a pair of exotic disk knots and showing that the resulting Brunnian links are still not smoothly equivalent by computing the smooth slice genus of a certain knot in their exterior. %operation that allows us to produce a 2-component Brunnian link from any disk knot

%To study  surface links, we leverage two other techniques originating in 3--dimensional knot theory: constructing surfaces via satellite operations (especially Bing doubling) and distinguishing surface links using covering surfaces, 
%\SK{(inspired by \cite{Cha-Kim})}.
% Theorem 1.4 was getting cut between pages so I tried to condense this phrase:
%%MP: I found some other gratuitous words. 
%a technique that for us was inspired by Cha--Kim \cite{Cha-Kim}.
%aaaaaaaa

We prove Theorem \ref{thm:intro-any-no-components} by induction on $n$, with $n=2$ the base case. Our constructions utilize the {\emph{Bing doubling}} operation on a disk knot, which we define precisely in Section \ref{background-bing-doubling}. For now, it suffices to know that the Bing doubling operation replaces one disk component of a surface link with two disjoint disks in a neighborhood of the original, as illustrated in Figure~\ref{fig:bing_doubling_disks}.

%A useful method for increasing the number of components in a surface link is the {\emph{Bing doubling}} operation. %By adapting Hayden's \cite{Hayden} examples of exotic disks in $B^4$, we produce an exotic pair of 2--component Brunnian disk links. Constructing these disk links requires Bing doubling a pair of slice disks, $D_1$ and $D_2$, in $B^4$. 
%We refer to the Bing double of a disk knot $D$ as $\BD(D)$. The Bing double of an unknotted disk is shown in Figure~\ref{fig:bing_doubling_disks}, and this operation is discussed in detail in Section~\ref{background-bing-doubling}. Bing doubling of classical links originates with Bing~\cite{Bing-Bingdoubling}, and Bing doubling of disks has also previously appeared in the literature for instance in~\cite{Daverman-Edwards,CimasoniBing,Cha-Kim,KrushkalSticky}.

%We will write $\BD(D)$ to denote the Bing double of a disk $D$.
%The underlying disks $D_1$ and $D_2$ in our construction are bounded by the same slice knot $K$, pictured in Figure~\ref{fig:slice-knot}.

%By capping off boundary components, we use Theorem \ref{thm:intro-any-no-components} to give new constructions of closed exotic surfaces in simply connected 4-manifolds (see Section \ref{section:closed-surfaces}).

\begin{figure}
\centering
\begin{subfigure}[b]{0.4\textwidth}\centering
	\includegraphics[scale=.4]{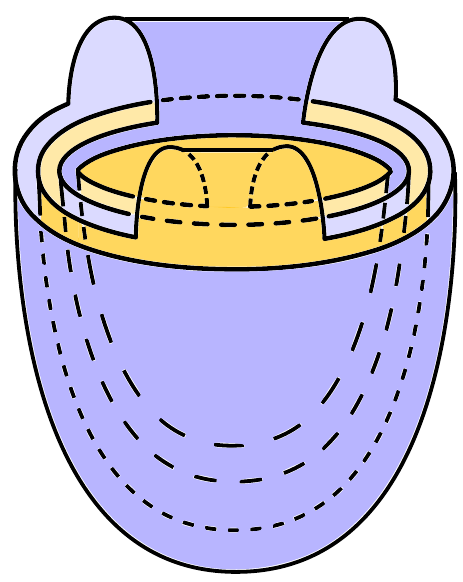}
\caption{}
\label{fig:bing_doubling_disks}
\end{subfigure}
\quad
\begin{subfigure}[b]{0.55\linewidth}
\centering
\def\svgwidth{.75\linewidth}%% Creator: Inkscape 1.0.1 (c497b03c, 2020-09-10), www.inkscape.org
%% PDF/EPS/PS + LaTeX output extension by Johan Engelen, 2010
%% Accompanies image file '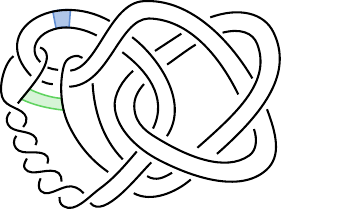' (pdf, eps, ps)
%%
%% To include the image in your LaTeX document, write
%%   \input{<filename>.pdf_tex}
%%  instead of
%%   \includegraphics{<filename>.pdf}
%% To scale the image, write
%%   \def\svgwidth{<desired width>}
%%   \input{<filename>.pdf_tex}
%%  instead of
%%   \includegraphics[width=<desired width>]{<filename>.pdf}
%%
%% Images with a different path to the parent latex file can
%% be accessed with the `import' package (which may need to be
%% installed) using
%%   \usepackage{import}
%% in the preamble, and then including the image with
%%   \import{<path to file>}{<filename>.pdf_tex}
%% Alternatively, one can specify
%%   \graphicspath{{<path to file>/}}
%% 
%% For more information, please see info/svg-inkscape on CTAN:
%%   http://tug.ctan.org/tex-archive/info/svg-inkscape
%%
\begingroup%
  \makeatletter%
  \providecommand\color[2][]{%
    \errmessage{(Inkscape) Color is used for the text in Inkscape, but the package 'color.sty' is not loaded}%
    \renewcommand\color[2][]{}%
  }%
  \providecommand\transparent[1]{%
    \errmessage{(Inkscape) Transparency is used (non-zero) for the text in Inkscape, but the package 'transparent.sty' is not loaded}%
    \renewcommand\transparent[1]{}%
  }%
  \providecommand\rotatebox[2]{#2}%
  \newcommand*\fsize{\dimexpr\f@size pt\relax}%
  \newcommand*\lineheight[1]{\fontsize{\fsize}{#1\fsize}\selectfont}%
  \ifx\svgwidth\undefined%
    \setlength{\unitlength}{167.94595433bp}%
    \ifx\svgscale\undefined%
      \relax%
    \else%
      \setlength{\unitlength}{\unitlength * \real{\svgscale}}%
    \fi%
  \else%
    \setlength{\unitlength}{\svgwidth}%
  \fi%
  \global\let\svgwidth\undefined%
  \global\let\svgscale\undefined%
  \makeatother%
  \begin{picture}(1,0.5972774)%
    \lineheight{1}%
    \setlength\tabcolsep{0pt}%
    \put(0,0){\includegraphics[width=\unitlength,page=1]{slice-knot.pdf}}%
    \put(0.8306165,0.45563877){\color[rgb]{0,0,0}\makebox(0,0)[lt]{\lineheight{1.25}\smash{\begin{tabular}[t]{l}$K$\end{tabular}}}}%
  \end{picture}%
\endgroup%

\caption{}\label{fig:slice-knot}
\end{subfigure}
\caption{(a) The Bing double of a disk consists of two disjoint disks. (b) The knot $K$, together with two bands, leading to the different slice disks $D_1$ and $D_2$, whose  Bing doubles appear in Theorem~\ref{thm:intro-disk-links}.}
\end{figure}

%We can now state the $n=2$ cases of Theorem \ref{thm:intro-any-no-components}.

\begin{theorem}\label{thm:intro-disk-links}
%The 4--ball contains an exotic pair of 2--component Brunnian disk links. In particular, 
The knot $K$ in Figure~\ref{fig:slice-knot} bounds two different slice disks $D_1$ and $D_2$ in $B^4$ $($pictured in Figure~\ref{fig:initial-disks}$)$. Their Bing doubles  $\BD(D_1)$ and $\BD(D_2)$ form an exotic pair of 2-component Brunnian disk links in $B^4$.
\end{theorem}

%Theorem~\ref{thm:intro-disk-links} proves the $n=2$ case of Theorem~\ref{thm:intro-any-no-components}~\ref{item:thm-intro-any-no-comps-i}. 

We briefly outline the strategy which distinguishes the diffeomorphism classes of the disk links in Theorem \ref{thm:intro-disk-links}. The disk links $\BD(D_1)$ and $\BD(D_2)$ have the same boundary in $S^3$, a link $L$. We identify a knot $\gamma$ in the exterior of $L$ such that any diffeomorphism of pairs sending $(B^4, \BD(D_1))$ to $(B^4, \BD(D_2))$ must preserve the isotopy class of $\gamma$. By showing that $\gamma$ is slice in the complement $B^4 \sm \BD(D_2)$ but not in $B^4 \sm \BD(D_1)$, we deduce that the disk link exteriors are not diffeomorphic. 

Next, we briefly sketch our proof of the $n=2$ case of  Theorem~\ref{thm:intro-any-no-components}~\ref{item:thm-intro-any-no-comps-ii}. 
Our construction of an infinite family of pairwise exotic 2-component Brunnian surface links relies on \textit{rim surgery}, a technique  first introduced in \cite{fintushelstern}. 
It takes as input an essential curve $\alpha$ on a surface $\Sigma$ and a knot $J$, and outputs a new surface  $\Sigma(\alpha;J)$ (see Section~\ref{subsection:rim-surgery}).

\begin{theorem}\label{thm:intro-surface-links}
Let $K$ be a strongly quasipositive topologically slice knot and let $\Sigma$ denote the surface link in Figure
%~{fig:infinitefamily-intro}.
~\ref{fig:infinitefamily}. 
For each integer $m\geq 0$, let $J_m \subseteq S^3$ be a knot whose mod 2 Alexander polynomial $\Delta_{J_m}(t) \in \mathbb{F}_2[t^{\pm 1}]$ has $m$ irreducible factors $($counted with multiplicity$)$. 
The 2--component Brunnian surface links 
$\{\Sigma(\alpha;J_m)\}_{m=0}^{\infty}$, each
consisting of a disk and a genus one surface, are pairwise exotic. 
\end{theorem}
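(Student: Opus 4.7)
My plan is to verify three properties of the family $\{\Sigma(\alpha;J_n)\}$ in sequence: (1) each member is Brunnian, (2) any two members are topologically ambiently isotopic rel.\ boundary, and (3) no two distinct members are smoothly equivalent.

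For (1), rim surgery along $\alpha$ is supported in a tubular neighborhood of $\alpha$, which lies on the genus one component of $\Sigma$. Removing the genus one component therefore leaves only the untouched disk, which is already smoothly unknotted by hypothesis. Removing the disk component leaves the rim-surgered genus one surface alone; because $\Sigma$ itself is Brunnian, this component considered alone is smoothly unknotted in $B^4$, and any essential simple closed curve on a standardly embedded genus one surface with unknotted boundary bounds an embedded disk in the complement. Rim surgery along a curve that bounds an embedded disk in the complement is trivial, so the surface remains unknotted, and each $\Sigma(\alpha;J_n)$ is Brunnian.

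For (2), I would apply the topological rim surgery theorem of Finashin (and Kim--Ruberman), which says that rim surgery along a curve null-homotopic in the topological complement of the surface preserves the ambient locally flat isotopy class. Since $K$ is topologically slice, the genus one component of $\Sigma$ can be topologically capped off by a locally flat disk bounded by $K$; this capping makes $\alpha$ null-homotopic in the topological complement of $\Sigma$. The theorem then implies that the $\Sigma(\alpha;J_n)$ are all topologically ambiently isotopic to one another.

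For (3), I would adopt the covering surfaces philosophy of Cha--Kim, adapted to four dimensions. Since the disk component of $\Sigma$ has unknotted boundary, the $k$-fold cyclic branched cover of $B^4$ over that disk is diffeomorphic to $B^4$. The genus one component lifts to a surface $\wt\Sigma_n \subseteq B^4$, and the rim surgery on $\Sigma$ with $J_n$ lifts to simultaneous rim surgeries on $\wt\Sigma_n$ along $k$ parallel lifts of $\alpha$, each using $J_n$. I would then distinguish the covering surfaces by an Alexander-type invariant of the complement of $\wt\Sigma_n$, such as the elementary ideals of the Alexander module of its infinite cyclic cover, whose irreducible factorization detects the number of distinct factors of $\Delta_{J_n}(t)$. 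The strongly quasipositive hypothesis on $K$ enters as a rigidity tool, controlling the smooth $4$-genus of certain auxiliary knots in the cover and preventing the $J_n$-contributions from being absorbed by the intrinsic Alexander data of $\wt\Sigma_n$.

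I expect the principal obstacle to be step (3): steps (1) and (2) should follow from established principles, but the covering-surface analysis must separate the infinite family $\{J_n\}$ by \emph{counting irreducible factors} of Alexander polynomials. Tracking how the factors of $\Delta_{J_n}(t)$ survive in the Alexander module of the cover without cancellation is precisely where the strongly quasipositive hypothesis on $K$ and Hayden-type smooth rigidity results for strongly quasipositive surfaces must do essential work; this is the step where I anticipate the argument to be the most technically demanding.
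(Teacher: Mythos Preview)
Your steps (1) and (2) are in the right spirit and roughly match the paper, though both rely on the fact that the rim surgery is \emph{1-twisted}: the paper shows that $\alpha$ bounds a framed smooth disk in the complement of the genus-one component and a framed locally flat disk in the complement of the whole link, then invokes Zeeman's theorem that 1-twist spins are unknotted (Lemma~\ref{lemma:when-rim-surgery-doesnt-change-anything}). Your appeal to a Finashin/Kim--Ruberman ``null-homotopic implies topologically standard'' statement is not quite the mechanism used here.

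Step (3), however, has a genuine gap. Any Alexander-type invariant of the complement of a covering surface---elementary ideals, Alexander modules, etc.---is a \emph{topological} invariant. Since you have already argued in step (2) that the $\Sigma(\alpha;J_n)$ are pairwise topologically ambiently isotopic, so are any covering surfaces constructed from them, and therefore their Alexander invariants all coincide. The covering-surface/Cha--Kim philosophy appears in the paper only in Section~\ref{section:covering-maps}, where it is used to propagate an already-established smooth inequivalence through Bing doubling; it is not the tool that separates the $\Sigma(\alpha;J_n)$ in the first place.

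The paper's actual obstruction is the link Floer cobordism map and the $\Omega$-invariant of \cite{JuhaszMillerZemke}. One attaches a specific band $b$ so that $\Sigma\cup b$ becomes the quasipositive genus-one Seifert surface for $\Wh_+(\Wh_+(K))$; strong quasipositivity of $K$ then forces $F_{B^4,\Sigma}$ to be nonvanishing via the composition law. The Juh\'asz--Zemke formula $F_{B^4,\Sigma(\alpha;J)}(1)=\Delta_J(z)\cdot F_{B^4,\Sigma}(1)$ gives $\Omega(\Sigma_n\cup b)=n$, and a diffeomorphism $(B^4,\Sigma_n)\cong(B^4,\Sigma_m)$ would carry $b$ to some band $b'$ with $\Omega(\Sigma_m\cup b')\in\{-\infty\}\cup\Z_{\ge m}$, a contradiction. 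This is where the strongly quasipositive hypothesis actually enters: it guarantees nonvanishing of a Floer-theoretic map, not rigidity of an Alexander module.
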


\begin{figure}
\labellist
  \pinlabel $S_2$ at 20 20
  \pinlabel {$\textcolor{darkpurple}{S_1}$} at 0 200
  \pinlabel {\textcolor{red}{$\alpha$}} at 290 190
\endlabellist
\includegraphics[width=40mm]{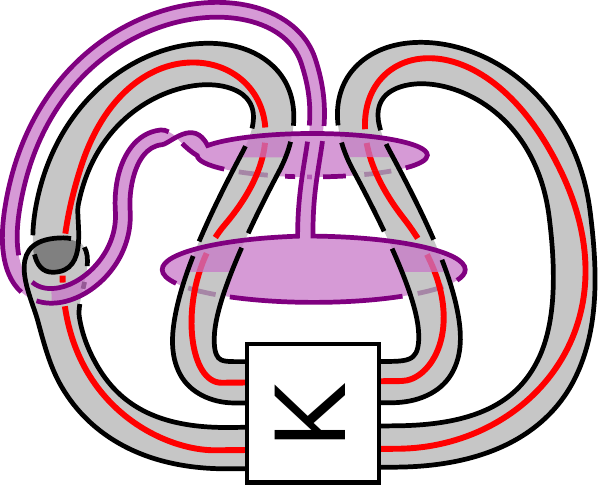}
\caption{The surface link $\S=S_1\sqcup S_2$. At the bottom of the figure, two bands in $S_2$ are knotted through $K$. The diagram of $K$ should be taken to have writhe zero.}\label{fig:infinitefamily}
\end{figure}

% \begin{figure}
% \labellist
%   \pinlabel $S_2$ at 20 20
%   \pinlabel $\textcolor{darkpurple}{S_1}$ at 0 200
% \endlabellist
% \includegraphics[width=40mm]{infinitefamily}
% \caption{The surface link $\S=S_1\sqcup S_2$. At the bottom of the figure, two bands in $S_2$ are knotted through $K$. The diagram of $K$ should be taken to have writhe zero.}\label{fig:infinitefamily-intro}
% \end{figure}

%% Figure can be uncommented here if we want to include it in the intro. 

%Rim surgery replaces the annulus $\alpha \times I \subseteq \Sigma$ by a knotted annulus determined by $J$; see Section~\ref{subsection:rim-surgery} for a precise description. 
The surface links of Theorem~\ref{thm:intro-surface-links} are constructed explicitly in Section~\ref{section:infinitely-many-surface-links}. 
A key ingredient in the construction is a choice of curve $\alpha \subseteq \Sigma$ such that:
\begin{enumerate}[(i)]
    \item $\alpha$ bounds a locally flat disk with interior in the complement of $\Sigma$,
    \item $\alpha$ bounds a smooth disk with interior in the complement of each component $\Sigma_i$, but
    \item the interior of every smooth disk with boundary $\alpha$ intersects~$\Sigma$.
    \end{enumerate}
This ensures that rim surgery along $\alpha$ preserves both the topological isotopy type of the link and the smooth isotopy type of each component, while potentially changing the smooth isotopy type of the link itself. We note that for the last item to hold, $\alpha$ must be essential, so the construction cannot work for disk links. 

To obstruct smooth equivalence of each pair, we use the link Floer cobordism maps induced by each surface link, following the methods of Juh\'asz--Miller--Zemke \cite{JuhaszMillerZemke},
%SK: the sentences below were in an old version, but we commented them out. I put them back in, since this provides some more details about the proof strategy for Theorem 1.3, as requested by the referee. 
and use link Floer cobordism maps. First, they showed that the cobordism map induced by a quasipositive surface is non-trivial, and then applied 
 a theorem of Juh\'{a}sz--Zemke
\cite{juhaszzemkeconcordancesurgery} to show rim surgery can potentially change the smooth isotopy type of a quasipositive surface. While the surface $\Sigma$ in Theorem~\ref{thm:intro-surface-links} is not quasipositive, we may compose it with another link cobordism and obtain a quasipositive surface. The composition laws for cobordism maps guarantee that the map associated to $\Sigma$ is non-vanishing.

To prove Theorem \ref{thm:intro-any-no-components}, and transition from the $n=2$ cases in Theorems \ref{thm:intro-disk-links} and \ref{thm:intro-surface-links} to $n \geq 3$, we use a technique inspired by work of Cha--Kim \cite{Cha-Kim} on covering links in $S^3$; this is done in Section~\ref{section:covering-maps}.
%
%Our proof of Theorem \ref{thm:intro-any-no-components} for $n \geq 3$ from the $n=2$ cases given in Theorems \ref{thm:intro-disk-links} and \ref{thm:intro-surface-links} is inspired by work of Cha--Kim \cite{Cha-Kim} on covering links in $S^3$.  
%
%
%
%we develop and apply the notion of a \emph{covering surface link} in $B^4$. 
%By viewing $\BD$ as an operator on ordered surface links, we can perform an iterated Bing doubling of an ordered surface link. We let $\BD^{k}(\S)$ denote the surface link obtained after Bing doubling \emph{the first component} of $\Sigma$ a total of $k$ times.
%By investigating the branched cover of $B^4$ along an unknotted disk component of our constructed exotic surface links,
We show that given an exotic pair $\Sigma$ and $\Sigma'$ of Brunnian 2--component surface links constructed as in Theorem~\ref{thm:intro-disk-links} or Theorem~\ref{thm:intro-surface-links}, iteratively Bing doubling one disk component of each of $\Sigma$ and $\Sigma'$ yields an exotic pair of $n$--component surface links. We prove this inductively by investigating the 2-fold covers of $B^4$ branched along a trivial disk component in each of these iterated Bing doubles. % $\BD^{n-2}(\Sigma)$ and $\BD^{n-2}(\Sigma')$ are an exotic pair of $n$--component surface links.%with orderings such that the first components are disks. Then $\BD^{n-2}(\Sigma)$ and $\BD^{n-2}(\Sigma')$ are an %Bing doubling the first disk component of $\Sigma$ and of $\Sigma'$ a total of $n-1$ times each yields a pair of
%exotic pair of $n$--component surface links.
%
%More precisely, consider any exotic pair $\Sigma$ and $\Sigma'$ of Brunnian $2$--component surface links from either Theorem~\ref{thm:intro-disk-links} or Theorem~\ref{thm:intro-surface-links}, with an ordering on $\Sigma$ and $\Sigma'$ for which one of the $($unknotted$)$ disks is $\Sigma_1$, respectively $\Sigma'_1$.   Then $\BD^{n-2}(\S)$ and $\BD^{n-2}(\S')$ is an $n$--component Brunnian exotic pair of surface links.
%\end{theorem}

%Theorem~\ref{thm:intro-any-no-components} is established by induction: under    (minor) technical conditions, we show that $\S$ arises as a covering surface link of $\BD(\S)$, and that a hypothetical diffeomorphism of pairs sending $\BD(\S)$ to $\BD(\S')$ would lift to a diffeomorphism of pairs sending $\S$ to $\S'$. However, we already proved that the latter cannot exist. 

%\subsection{Increasing the genus}\label{subsec:increasing-genus}

%Next, we state some of the other results in this paper.

\subsection{Increasing the genera}
We modify the pairs of surface links from Theorem~\ref{thm:intro-surface-links} to increase the genera of the surface components in the 2-component surface link case.
%In Section~\ref{section:surfaces-of-higher-genus}, we modify the exotic pairs of 2--component links from Theorem~\ref{thm:intro-surface-links} to increase the genus of the components.

\begin{theorem}\label{thm:increasing-the-genus}
Fix a pair of  integers $r \ge 0$ and $s\ge 1$.  There exists an infinite family of Brunnian 2--component surface links in $B^4$ consisting of a genus $r$ surface and a genus $s$ surface, any two of which form an exotic pair.

In particular, let  $\S^{r,s}_0$ be the surface link constructed in 
Figure~\ref{fig:genusmn}.
%Section~\ref{section:surfaces-of-higher-genus}.   
For each $n\geq 0$, let $J_n \subseteq S^3$ be a knot whose mod 2 Alexander polynomial has $n$ irreducible factors $($counted with multiplicity$)$. 
Then the 2--component Brunnian surface links  
$\{\Sigma^{r,s}_0(\alpha;J_n)\}$, each consisting of a genus $r$ surface and a genus $s$ surface, are pairwise exotic. 
%
%
%The infinite collection $\{\Sigma_\alpha(J_n)\}$
%The collection of $1$-twist rim surgered surface links $\{\S^{r,s}(J_n)\}$, each consisting of a genus $r$ surface and a genus $s$ surface, are Brunnian and pairwise exotic. 
%
%
%and are all topologically ambiently isotopic to one another rel.\ boundary.
%  Moreover if $\Delta_J(t)$ and $\Delta_{J'}(t)$ have different numbers of irreducible factors, then $\S^{r,s}(J)$ and  $\S^{r,s}(J')$ form an exotic pair.
\end{theorem}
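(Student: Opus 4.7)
The plan is to model the proof on that of Theorem~\ref{thm:intro-surface-links}, generalizing the genus $(0,1)$ examples to genus $(r,s)$ via stabilization. Concretely, I would take the surface link $\Sigma$ from Theorem~\ref{thm:intro-surface-links}, choose a small 4--ball in $B^4$ disjoint from the distinguished curve $\alpha$, and inside it attach $r$ trivial 1--handles to the disk component and $s-1$ trivial 1--handles to the genus-one component. Call the resulting surface link $\Sigma^{r,s}_0$; up to isotopy this should agree with the explicit construction sketched in Figure~\ref{fig:genusmn}. The boundary in $S^3$ is unchanged, each component attains the prescribed genus, and the remainder of the setup from Section~\ref{subsec:2-comp-examples} carries over verbatim.

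The first task is to verify that $\alpha$ still satisfies properties (1)--(3) of Section~\ref{subsec:2-comp-examples} for $\Sigma^{r,s}_0$. Since the stabilizations occur in a ball disjoint from $\alpha$, the topological slice disk and the smooth per-component slice disks from the $(0,1)$ case persist, giving (1) and (2). For (3), any smooth slice disk for $\alpha$ in the complement of $\Sigma^{r,s}_0$ could be made disjoint from the stabilization ball by a small isotopy, after which the trivial 1--handles could be locally destabilized to produce a smooth slice disk for $\alpha$ in the complement of the original $\Sigma$, contradicting the $(0,1)$ case. With (1)--(3) in hand, rim surgery preserves the topological isotopy class of the link and the smooth isotopy class of each individual component, so $\{\Sigma^{r,s}_0(\alpha;J_n)\}$ is an entire family of topologically isotopic, 2--component Brunnian surface links with components of genera $r$ and $s$.

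To distinguish members of the family smoothly, I would follow the link Floer cobordism map strategy of Juh\'asz--Miller--Zemke. By the Juh\'asz--Zemke concordance surgery formula applied to $\alpha$, the cobordism map induced by $\Sigma^{r,s}_0(\alpha;J_n)$ differs from that of $\Sigma^{r,s}_0$ by multiplication by the Alexander polynomial $\Delta_{J_n}$ acting on an appropriate module of local coefficients. Provided the cobordism map of $\Sigma^{r,s}_0$ is nonzero, the number of irreducible factors of $\Delta_{J_n}$ becomes a smooth invariant of $\Sigma^{r,s}_0(\alpha;J_n)$, forcing distinct values of $n$ to give pairwise exotic surface links. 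To obtain nontriviality of the map for $\Sigma^{r,s}_0$, I would compose with a cobordism that caps off the newly added 1--handles and then with the same cobordism to a quasipositive surface used in the proof of Theorem~\ref{thm:intro-surface-links}, so that functoriality of link Floer maps reduces the question to the nontriviality already established in the $(0,1)$ setting.

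The main obstacle is this final nontriviality step: stabilization of a cobordism can introduce factors that act as zero on relevant parts of link Floer homology, so one must select the 1--handles carefully (e.g.\ so that each added handle is bounded by a disk in the chosen cap-off cobordism) and verify that the composition with this cap-off acts as the identity on the summand detecting the $(0,1)$ map. Once this bookkeeping is in place, the argument is a direct analogue of the proof of Theorem~\ref{thm:intro-surface-links} and the pairwise--exotic conclusion follows.
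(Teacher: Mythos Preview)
Your overall strategy---rim surgery along $\alpha$, the properties (1)--(3), and the Juh\'asz--Miller--Zemke $\Omega$ machinery---matches the paper's. But your construction of $\Sigma^{r,s}_0$ via \emph{trivial} stabilization is not the paper's construction, and the difference is exactly the obstacle you flag at the end.

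First, the surface in Figure~\ref{fig:genusmn} is not a trivial stabilization of $\Sigma_0$: the paper bands in split copies of a specific nontrivial Brunnian link $R^g$ (Figure~\ref{fig:linkR}), and the boundary link in $S^3$ genuinely changes. So your claim that ``up to isotopy this should agree with the explicit construction'' and that ``the boundary in $S^3$ is unchanged'' are both false for the paper's $\Sigma^{r,s}_0$.

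Second, and more seriously, your proposed ``cap off the newly added 1--handles'' step does not exist in the link cobordism category. The trivial handles live in the interior of $B^4$; once you have a surface from $\emptyset$ to $L$, no subsequent cobordism $L\to L'$ can lower its genus. After banding your trivially stabilized link via $b$, you obtain a genus $r+s$ surface bounding $\Wh_+(\Wh_+(K))$, a genus one knot. Since $\widehat{\HFK}$ detects Seifert genus, the induced map on $\widehat{\HFL}$ vanishes whenever the surface genus exceeds the Seifert genus of the boundary, so $\Omega=-\infty$ and the argument collapses.

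The paper's key idea is to sidestep this entirely: the pieces $R^g$ are chosen so that, after the band $b^g_R$, each $R^g$ becomes a \emph{positive} Seifert surface for $\#_g(\text{right-handed trefoil})$ (Proposition~\ref{prop:gettrefoil}). Consequently, banding the two components of $\Sigma^{r,s}_0$ along the specific band $b$ yields a genuine quasipositive Seifert surface for $\Wh_+(\Wh_+(K))\,\#\,(\#_{r+s-1}\text{trefoil})$ (Proposition~\ref{prop:genussqp}), a knot whose Seifert genus is exactly $r+s$. Now Remark~\ref{rem:usefulprop}\eqref{prop2} applies directly, $\Omega(\Sigma^{r,s}_0\cup b)=0$, and the rest of the $\Omega$ argument (including the passage from ``isotopy rel boundary'' to ``diffeomorphism of pairs'' via the unknown band $b'=f(b)$) goes through as in Theorem~\ref{thm:infinitefamily}.
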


The primary constructive technique requires banding the surface from Theorem \ref{thm:intro-surface-links} with another simple Brunnian surface link of controlled genus. An analogous application of the link Floer argument from Theorem \ref{thm:intro-surface-links} obstructs a diffeomorphism of pairs.

\subsection{\texorpdfstring{$\boldsymbol{(n,k)}$}--Brunnian disk links}\label{subsection:n-k-brunnian-disk-links}
We study a natural generalization of Brunnian links.

\begin{definition}
An $n$--component surface link is \textit{$(n,k)$--Brunnian} if every sublink of fewer than $k$ components is an unlink, but every sublink of at least $k$ components is nontrivial.
\end{definition}

The notion of $(n,k)$--Brunnian was introduced by Debrunner~\cite{Debrunner} for links in $S^3$.
Debrunner's original definition requires 
all sublinks of at least $k$ components to be nonsplit; in our adaptation to dimension four, we ask only for these sublinks to be nontrivial. 
By combining his construction of $(n,k)$--Brunnian links with our construction of exotic disk links, we prove the following:

\begin{theorem}\label{thm:intro-nk-brunnian}
For any integers $n$ and $k$ with $n \geq 2$ and $1\le k\le n$, there exists a pair of $(n,k)$--Brunnian disk links in $B^4$ forming an exotic pair.
\end{theorem}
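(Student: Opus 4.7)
The plan is to combine Debrunner's combinatorial construction of $(n,k)$-Brunnian links in $S^3$ with the exotic Brunnian disk link constructions already established. The extremal case $k = n$ is immediate, since an $n$-component link is $(n,n)$-Brunnian precisely when it is Brunnian, so Theorem~\ref{thm:intro-any-no-components}$(i)$ directly furnishes the desired exotic pair.

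For $1 \leq k < n$, I would start from an exotic pair $(\Sigma_0, \Sigma_0')$ of $k$-component Brunnian disk links in $B^4$, supplied by Theorem~\ref{thm:intro-any-no-components}$(i)$ when $k \geq 2$ (and, in the degenerate case $k = 1$, by a single exotic disk knot of Hayden). I would then perform a Debrunner-style augmentation: pick a distinguished component of $\Sigma_0$ that bounds a smoothly flat disk in $B^4$ (available in the Bing-doubling construction used to prove Theorem~\ref{thm:intro-any-no-components}, where the added components are standardly embedded) and, in a small ball around this flat disk, introduce $n-k$ additional disk components arranged according to Debrunner's \cite{Debrunner} linking pattern realizing $(n,k)$-Brunnian-ness in $S^3$. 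Performing the identical local modification on $\Sigma_0'$ produces $\Sigma'$.

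Verifying that $\Sigma$ and $\Sigma'$ are $(n,k)$-Brunnian would then amount to running Debrunner's case analysis inside $B^4$: sublinks of size strictly less than $k$ isotope to sublinks of $\Sigma_0$ together with standardly arranged auxiliary disks, and so are unlinked; sublinks of size at least $k$ contain either a copy of the original $k$-Brunnian $\Sigma_0$ or a nontrivial Debrunner subconfiguration, and so are nontrivial. Topological ambient isotopy of $\Sigma$ and $\Sigma'$ follows from the topological isotopy of $(\Sigma_0, \Sigma_0')$ together with the fact that the local augmentation is performed identically in both. To obstruct smooth equivalence I would adapt the covering surface link framework of Theorem~\ref{thm:intro-any-no-components}: the pair $(\Sigma_0, \Sigma_0')$ is recovered from $(\Sigma, \Sigma')$ as a covering surface link, and any ambient diffeomorphism $(B^4, \Sigma) \to (B^4, \Sigma')$ would lift to an ambient diffeomorphism $(B^4, \Sigma_0) \to (B^4, \Sigma_0')$, contradicting Theorem~\ref{thm:intro-any-no-components}$(i)$.

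The main technical obstacle is implementing Debrunner's $S^3$ recipe cleanly for disk links in $B^4$ while simultaneously preserving the $(n,k)$-Brunnian property \emph{and} the covering surface lift. For intermediate values of $k$ (say $k$ near $n/2$), a naive ``parallel push-off'' augmentation fails — $n-k+1$ parallel copies of a flat disk form an unlink, violating the nontriviality requirement on $k$-sublinks — so one must honestly implement Debrunner's more delicate linking pattern. One must then check that the augmentation components are distinguishable from the original $\Sigma_0$ components under any hypothetical ambient diffeomorphism of pairs, so that the covering surface lift to $(\Sigma_0, \Sigma_0')$ actually goes through.
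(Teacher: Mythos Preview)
Your proposal has a genuine gap in the construction step. Debrunner's $(n,k)$-Brunnian pattern is not obtained by locally adding $n-k$ components to a single $k$-Brunnian link; it requires $\binom{n}{k}$ \emph{copies} of a $k$-Brunnian link---one for each $k$-element subset of $\{1,\dots,n\}$---with the components labeled by the same element then fused together. The paper implements exactly this: it takes $\binom{n}{k}$ split copies of $\BD^{k-1}(D)$, colors each copy's components with a distinct $k$-subset of an $n$-color palette, and bands together all components of each fixed color to produce an $n$-component disk link $\Sigma_{n,k}(D)$. Your ``introduce $n-k$ additional disk components in a small ball around a flat disk'' is a different recipe, and as you yourself observe it cannot produce the required nontriviality of all $k$-sublinks for intermediate $k$.

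For the smooth obstruction, the paper's argument is also rather different from, and substantially cleaner than, your proposed covering-surface lift. The key structural fact established about $\Sigma_{n,k}(D)$ is that \emph{every} $k$-component sublink is smoothly isotopic to $\BD^{k-1}(D)$. Applying this with $D=D_1$ and $D=D_2$: any diffeomorphism $(B^4,\Sigma_{n,k}(D_1)) \to (B^4,\Sigma_{n,k}(D_2))$ carries some $k$-sublink to some $k$-sublink, hence induces a diffeomorphism $(B^4,\BD^{k-1}(D_1)) \to (B^4,\BD^{k-1}(D_2))$, contradicting Theorem~\ref{thm:intro-any-no-components}. No covering-space argument and no delicate component-distinguishing analysis are needed at this stage---the homogeneity of the $k$-sublinks absorbs all the ambiguity about which components the hypothetical diffeomorphism permutes.
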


Heuristically, the index $k$ measures the extent to which the link components are entangled. %The index $k$ can be thought of as measuring the depth of the entanglement of the $(n,k)$--Brunnian link. 
At one extreme are $(n,n)$--Brunnian links: these are the nontrivial $n$--component Brunnian links. 
%-- they become unlinks after deleting a single component)}}; 
The $(n,1)$--Brunnian links lie at the other extreme: all of their nonempty sublinks are nontrivial. %$(n,1)$--Brunnian link is trivial). 
Thus Theorem~\ref{thm:intro-nk-brunnian} encapsulates a wide spectrum of exotic behavior. 

The extreme cases of Theorem~\ref{thm:intro-nk-brunnian} follow immediately from the above:
%We remark that in these extreme cases Theorem~\ref{thm:intro-nk-brunnian} follows immediately from our previous work: 
for $k=n$, take the disk links
%$\BD^{n-1}(D_1)$ and $\BD^{n-1}(D_2)$ arising 
from  Theorem~\ref{thm:intro-any-no-components}; %\KH{applied to the examples of Theorem~\ref{thm:intro-disk-links}},
 for $k=1$, consider a split union of $n$ copies of~$D_1$ and the same of~$D_2$, then apply~\cite{Hayden}. 
The intermediate cases require a new construction.

%This is a generalization of a Brunnian link studied by Debrunner \cite{Debrunner}. An $n$--component link is $(n,k)$-Brunnian if each of its sublinks with fewer than $k$ components is an unlink but each of its sublinks with at least $k$ components is nontrivial. (Note that Debrunner actually asks for proper sublinks not to be split, but this notion is slightly inconvenient when considering surface links.) Roughly, the $k$ index indicates the ``tangledness" of an $(n,k)$-Brunnian link. An $(n,n)$-Brunnian link becomes trivial if even a single component is deleted, suggesting that $(n,n)$-Brunnian links are as delicately entangled as possible. On the other hand, an $(n,1)$-Brunnian link has no nonempty trivial sublinks, so in some sense an $(n,1)$-Brunnian link is extremely tangled.

%In Section~\ref{section:nkBrunnian} we give a slight variation of Debrunner's construction of $(n,k)$-Brunnian links, making use of the exotic Brunnian links of many components of Theorem~\ref{thm:intro-any-no-components}. We conclude the following theorem.

%\begin{theorem}\label{thm:intro-nk-brunnian}
%Let $n$ and $k$ be integers with $n\ge 2$ and $1\le k\le n$. There exists a pair of $(n,k)$--Brunnian disk links in $B^4$ forming an exotic pair.
%\end{theorem}

\subsection{Exotic closed surfaces}\label{subsec:closed}
Finally, in Section~\ref{section:closed-surfaces}, we show that exotic surface links in $B^4$ can be used to construct exotic links of closed surfaces in larger 4-manifolds.  This is illustrated using the disk links $\BD(D_1)$ and $\BD(D_2)$ from Theorem~\ref{thm:intro-disk-links}. We first consider the 4--manifold $X$  obtained from $B^4$ by attaching 0-framed 2-handles along the link in $S^3$ bounding the iterated Bing doubles $\BD^k(D_1)$ and $\BD^k(D_2)$. Capping off these disk links with the cores of the 2-handles yields a pair of exotic sphere links in $X$. When $k=1$, we then show that $X$ further embeds into a closed 4--manifold in which these sphere links remain exotic. In each case, we distinguish the exotic sphere links by considering the effect of surgering the ambient 4--manifold along these spheres.

\subsection{Open problems}
Here are some natural further questions that remain open.

\begin{enumerate}
\item  Construct exotic pairs/families of $n$--component Brunnian surface links where the components have arbitrary genera.
%Increase the genus and the number of components simultaneously.
\item Construct examples of infinitely many Brunnian disk links that are pairwise exotic.
\item\label{bingquestion} Does there exist a distinct pair of surface links (or knots) in $B^4$ with equivalent Bing doubles? This can be interpreted in both categories, or asked purely for exotic pairs.  
%\footnote{MP: I thought this is actually a pretty interesting  general question (which makes sense in both categories, hence the deliberate hedging here), we provide some nice evidence that the answer is no. Does anyone know a counterexample?}
\end{enumerate}

The results of this article could be viewed as evidence towards a negative answer to \eqref{bingquestion}.
%\SK{
%The results of this article suggest a negative answer to (3).}%is can be viewed as evidence for a negative answer to the final problem.}
%\footnote{MP: I found the wording that the results suggest anything too strong. }

%Let $X$ be a smooth, compact 4-manifold and let $S \subseteq X$ be a smoothly embedded surface link with 2--components. Fix an exotic pair of 2--component surface links from one of our constructions.
%Modify $S$ in a  4-ball in two ways, using these two surface links. Under what conditions does this yield an exotic pair in $X$?

\subsection{Organization}

In Section~\ref{background-bing-doubling}, we %establish foundations on 
recall the technique of 
Bing doubling disks. Section~\ref{section:exotic-pair-disk-links} constructs the exotic pair of Brunnian 2--component disk links promised in Theorem~\ref{thm:intro-disk-links}.  In Section~\ref{section:infinitely-many-surface-links}, we recall the necessary Heegaard Floer theory and construct the pairwise exotic Brunnian surface links of Theorem~\ref{thm:intro-surface-links}. Section~\ref{section:surfaces-of-higher-genus} extends this construction %of Section~\ref{section:infinitely-many-surface-links} 
to produce an infinite family of 2-component Brunnian links with arbitrary genera.
%infinite exotic family of Brunnian 2--component surface links consisting of a pair of surfaces of any given pair of genera $(r,s)$. 
Covering surfaces are introduced in Section~\ref{section:covering-maps}; this section also contains the proof of Theorem~\ref{thm:intro-any-no-components}.
%and uses it to show that, under mild assumptions, Bing doubling a disk component in each of an exotic pair of Brunnian surface links again gives rise to an exotic, Brunnian pair, enabling us to construct examples with any given number of components. 
Section~\ref{section:nkBrunnian} 
%uses the work from Sections~\ref{section:exotic-pair-disk-links} and~\ref{section:covering-maps} to 
constructs exotic pairs of $(n,k)$-Brunnian disk links. 
Finally, Section~\ref{section:closed-surfaces} produces our examples of %some results on
exotic closed surfaces. 
The appendices detail the computer-assisted calculations used to prove Theorems~\ref{thm:intro-any-no-components} and \ref{thm:intro-disk-links}.

\subsection{Conventions}
We provide some of our conventions.
\begin{itemize}[leftmargin=.8cm]
\item $J$, $K$, and $L$ denote knots or links in $S^3$, while $\Sigma$, $S$, and $D$ denote surfaces in $B^4$, with $D$ always a disk.
\item We work %either 
in the category $\DIFF$ of smooth manifolds with smooth embeddings and smooth isotopies, and the category $\TOP$ of topological manifolds with locally flat embeddings and topological ambient isotopies. We  specify whether an isotopy or surface is in $\DIFF$ or $\TOP$.
\item Given a smooth or locally flat proper submanifold $Y \subseteq X$, $\nu Y$ will denote an open tubular neighborhood of $Y$.  In the smooth case, the existence of tubular neighborhoods is standard differential topology. In the locally flat case, existence follows from~\cite[Theorem~9.3]{FreedmanQuinn} when $X$ is a 4-manifold.
%For $X$ a 4-manifold (our typical scenario) in the locally flat case the existence is from \cite[Theorem~9.3]{FreedmanQuinn}.  
A closed tubular neighborhood is denoted by $\ol{\nu}Y$.
\end{itemize}

\subsection{Acknowledgements}

This work is the product of a research group formed under the auspices of the American Institute for Mathematics (AIM) in their virtual Research Community on 4-dimensional topology. We are grateful to AIM, and especially to the program organizers Miriam Kuzbary, MM, Juanita Pinz\'on-Caicedo, and Hannah Schwartz. We thank Slava Krushkal for providing valuable feedback on an earlier draft of this paper. SK thanks Jen Hom and John Etnyre for helpful conversations. AK and MP are grateful to the Max Planck Institute for Mathematics in Bonn, where they were visitors while this paper was written. We used KLO \cite{klo}, developed by Frank Swenton, for initial exploration.

%%%%%%%%%%%%%%%%%%%%%%%%%%%%%
\section{Background on Bing doubling}\label{background-bing-doubling}

In this section, we review Bing doubling, the primary tool used throughout to produce surface links with arbitrarily many components. %The Bing doubling operation replaces one disk component of a surface link with two disjoint disks in a neighborhood of the original, as illustrated in Figure~\ref{fig:bing_doubling_disks}.

We obtain the \emph{Bing double} of a slice disk $D$ in $B^4$ with $\partial D = K$ as follows.  First, define a model Bing double of the core disk $D^2 \times \{0\} \subseteq D^2 \times D^2$ from four parallel copies of the disk, joined with two bands, as depicted in Figure~\ref{fig:BingDouble_Banding}.
The result is two disjoint slice disks in $D^2 \times D^2 \cong B^4$ for the Bing double of $S^1 \times \{0\} \subseteq S^1 \times D^2 \subseteq \partial(D^2 \times D^2) \cong S^3.$
Now, for a slice disk $D$ in $B^4$, choose an orientation-preserving diffeomorphism of a tubular neighborhood $\nu D$ with $D^2 \times D^2$ such that $D$ maps to $D^2 \times \{0\}$ via an orientation-preserving diffeomorphism.  Embed the model Bing double in $B^4$ using the inverse of this identification to yield the pair of disks  $\BD(D) \subseteq \nu D$. Their boundary is the Bing double $\BD(K) \subseteq \nu K$ of the knot~$K$.  Since any two orientation-preserving diffeomorphisms of $D^2 \times D^2$ are isotopic (not necessarily rel.\ boundary), the isotopy class of $\BD(D)$ is independent of the choice of identification $\nu D \cong D^2 \times D^2$.

\begin{figure}[h]\center
\labellist
\pinlabel {$K$} at 210 128
\pinlabel {$K$} at 620 128
\pinlabel {$K$} at 1045 128
\pinlabel {$K$} at 1470 128
\tiny
\pinlabel {attach} at 338 150
\pinlabel {bands} at 338 110
\pinlabel {resolve} at 760 150
\pinlabel {isotopy} at 1180 150
\endlabellist
\includegraphics[scale=.275]{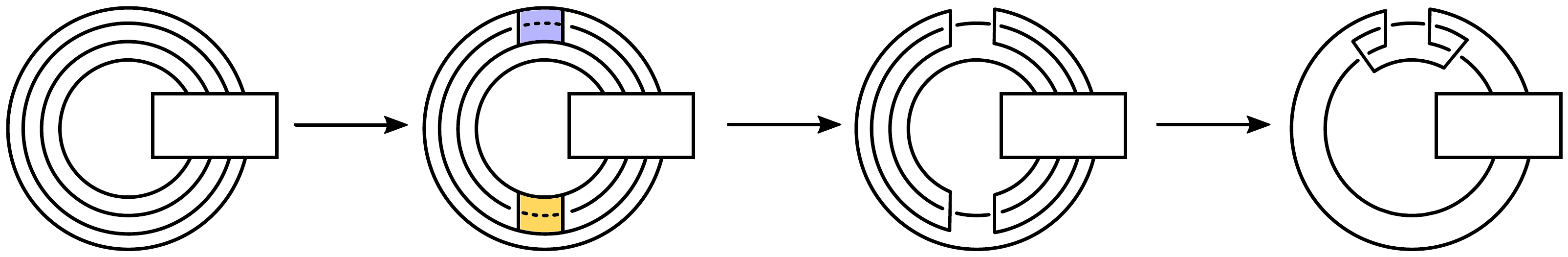}
\caption{Constructing $\BD(D)$, the Bing double of a slice disk $D$, from four copies of $D$ and two bands.}
\label{fig:BingDouble_Banding}
\end{figure}

Going a step further, we define Bing doubling as an operation on ordered surface links.
Let $\Sigma = \Sigma_1 \sqcup \cdots \sqcup \Sigma_k$ be a surface link, with $\S_1 \cong D^2$.  To perform the Bing doubling operation and obtain a new surface link $\BD(\S)$, replace $\Sigma_1$ with $\BD(\S_1)$, and label the two new disks $\BD(\Sigma)_1$ and $\BD(\S)_2$; then label $\BD(\S)_i := \S_{i-1}$ for $3 \leq i \leq k+1$.  This defines $\BD(\S)$ as an ordered surface link obtained from $\S$ via Bing doubling.  For an ordered surface link $\Sigma$, we will also write $\BD(\S)$, where the doubling operation is always performed on the first component. 

Viewing $\BD$ as an operator on ordered surface links, we may perform an iterated Bing doubling. We write $\BD^{k}(\S)$ for the surface link obtained after Bing doubling \emph{the first component} $k$ times. (The ordering of the two components of  $\BD(\S_1)$, assigned arbitrarily at each stage, does not affect the construction.) This operation is integral to the proof of Theorem~\ref{thm:intro-any-no-components}. 

%s~\ref{thm:intro-disk-links} and~\ref{thm:intro-surface-links} to any number of connected components.

We prove some foundational lemmas about Bing doubling surfaces.
%When $\CAT=\TOP$, we assume that all embeddings and isotopies are locally flat.

\begin{lemma}\label{lem:doubling-preserves-isotopy}
For $\CAT$ either $\DIFF$ or $\TOP$, let $\Sigma = \Sigma_1 \sqcup \cdots  \sqcup \Sigma_n$ and $\Sigma' = \Sigma'_1 \sqcup \cdots  \sqcup \Sigma'_n$ be surface links in $B^4$ with $\partial \Sigma = \partial \Sigma' \subseteq S^3$. Suppose that $\Sigma $ and $\Sigma'$ are $\CAT$ ambiently isotopic rel.\ boundary as ordered surface links. Assume that $\Sigma_1 \cong D^2 \cong \Sigma_1'$ and that $\partial(\BD(\Sigma_1))= \partial(\BD(\Sigma_1'))$. Then the surface links  $\BD(\Sigma) = \BD(\Sigma_1) \sqcup \Sigma_2 \sqcup \cdots \sqcup \Sigma_n$ and $\BD(\Sigma') = \BD(\Sigma_1') \sqcup \Sigma_2' \sqcup \cdots  \sqcup \Sigma_n'$, obtained by Bing doubling the first component of each surface link, are also $\CAT$ ambiently isotopic rel.\ boundary.
\end{lemma}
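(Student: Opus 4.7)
The plan is to start with an ambient isotopy realizing the hypothesis $\Sigma \simeq \Sigma'$, apply it to $\BD(\Sigma)$, and then correct the resulting pair of disks near $\Sigma_1'$ by appealing to the relative uniqueness of framed tubular neighborhoods.  The core input is that, given a disk $\Sigma_1' \subseteq B^4$ with a prescribed framing on $\partial \Sigma_1'$, the ambient Bing double is well-defined up to $\CAT$-ambient isotopy rel.\ boundary.

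By hypothesis, there is a $\CAT$-ambient isotopy $\Phi_t\colon B^4 \to B^4$ with $\Phi_0 = \id$, $\Phi_t|_{S^3} = \id$, and $\Phi_1(\Sigma_i) = \Sigma_i'$ for each $i$.  Applying $\Phi_t$ to the link $\BD(\Sigma)$ gives a $\CAT$-ambient isotopy rel.\ boundary between $\BD(\Sigma)$ and
\[
\Phi_1(\BD(\Sigma_1)) \,\sqcup\, \Sigma_2' \,\sqcup\, \cdots \,\sqcup\, \Sigma_n'.
\]
Since $\Phi_1$ is an orientation-preserving $\CAT$-homeomorphism sending $\Sigma_1$ to $\Sigma_1'$, the pair $\Phi_1(\BD(\Sigma_1))$ is itself a Bing double of $\Sigma_1'$, built from the transported tubular neighborhood $\Phi_1(\nu\Sigma_1)$ of $\Sigma_1'$ together with the pushed-forward trivialization.

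It remains to show that this transported Bing double is $\CAT$-ambient isotopic rel.\ boundary to $\BD(\Sigma_1')$ via an isotopy supported in a neighborhood of $\Sigma_1'$ disjoint from $\Sigma_2', \ldots, \Sigma_n'$.  Because $\Phi_1|_{S^3} = \id$, the two pairs $\Phi_1(\BD(\Sigma_1))$ and $\BD(\Sigma_1')$ have identical boundary in $S^3$ by hypothesis, so the two trivializations of the respective tubular neighborhoods of $\Sigma_1'$ induce the same framing on $\partial \Sigma_1'$ up to isotopy in $S^3$.  Relative uniqueness of (framed) tubular neighborhoods of the disk $\Sigma_1'$ in $B^4$, combined with the vanishing of $\pi_2(O(2))$, then produces a $\CAT$-ambient isotopy fixing $S^3$ and supported in an arbitrarily small neighborhood of $\Sigma_1'$ that carries one framed tubular neighborhood onto the other and identifies $\Phi_1(\BD(\Sigma_1))$ with $\BD(\Sigma_1')$.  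In $\DIFF$ this is the standard relative tubular neighborhood theorem; in $\TOP$ we invoke \cite[Theorem~9.3]{FreedmanQuinn}.  Choosing this neighborhood to be disjoint from $\Sigma_2', \ldots, \Sigma_n'$ and concatenating with the isotopy from the previous step finishes the proof.

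The main obstacle is this correction step: executing the relative uniqueness statement uniformly for $\CAT = \DIFF, \TOP$ and packaging it so the resulting ambient isotopy simultaneously fixes $S^3$, carries $\Phi_1(\BD(\Sigma_1))$ to $\BD(\Sigma_1')$, and is disjoint from the other components.  The $\TOP$ case is the more delicate piece, as smoothing is not available and one must appeal directly to the normal bundle theory of \cite{FreedmanQuinn} for locally flat surfaces in $4$--manifolds.
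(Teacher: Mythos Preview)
Your proposal is correct and follows essentially the same approach as the paper's proof. Both arguments transport the given ambient isotopy, then correct near $\Sigma_1'$ using uniqueness of tubular neighborhoods/normal bundles (with \cite[Chapter~9]{FreedmanQuinn} for the $\TOP$ case); the paper phrases the correction as first matching the base parametrization of the disk via Smale's theorem and isotopy extension, then matching the normal bundle trivialization, whereas you package both steps as ``relative uniqueness of framed tubular neighborhoods'' together with $\pi_2(O(2))=0$.
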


\begin{proof}
Let $F_t \colon B^4 \to B^4$, $t \in [0,1]$, be a $\CAT$ ambient isotopy with $F_t|_{S^3} = \Id_{S^3}$ for all $t$, such that $F_0=\Id_{B^4}$ and $F_1(\Sigma) = \Sigma'$.  Let $G \colon D^2 \times D^2 \xrightarrow{\cong} \ol\nu \Sigma_1$ and $G' \colon D^2 \times D^2 \xrightarrow{\cong} \ol\nu \Sigma_1'$ be two identifications, used for the definitions of $\BD(\S_1)$ and $\BD(\S'_1)$ respectively.

Every homeomorphism of $D^2$ is smoothable, and every orientation preserving rel.\ boundary diffeomorphism of $D^2$ is isotopic rel.\ boundary  to the identity by Smale's theorem \cite{Smale-diffeo-D2}. Therefore, by the isotopy extension theorem \cite{EdwardsKirby}, we arrange that the isotopy between $\Sigma$ and $\Sigma'$ sends $\Sigma_1$ to $\Sigma_1'$ respecting fixed choices of parametrizations $D^2 \to \Sigma_1$ and $D^2 \to \Sigma_1'$.
  In other words, extending $F_t$ with a further ambient isotopy supported in a neighborhood of $\Sigma_1$, we arrange that the composition
  \[D^2 \times \{0\}  \xrightarrow{G} \Sigma_1 \xrightarrow{F_1} \Sigma_1'  \xrightarrow{(G')^{-1}} D^2 \times \{0\}\]
is the identity.
Similarly, by uniqueness of normal bundles for 2-dimensional submanifolds of 4-manifolds (which holds for $\CAT=\TOP$ by \cite[Chapter~9]{FreedmanQuinn}), we may assume that $F_1$ sends a given parametrization of  $\ol\nu \Sigma_1$ as $D^2 \times D^2$ to a given such parametrization of $\ol\nu \Sigma_1'$.  The disks $\BD(\Sigma_1) \subseteq \ol\nu \Sigma_1$ are therefore sent to the disks $\BD(\Sigma_1') \subseteq \ol\nu \Sigma_1'$ by the new $F_1$.
\end{proof}

\begin{figure}[b]\center
 \def\svgwidth{\linewidth}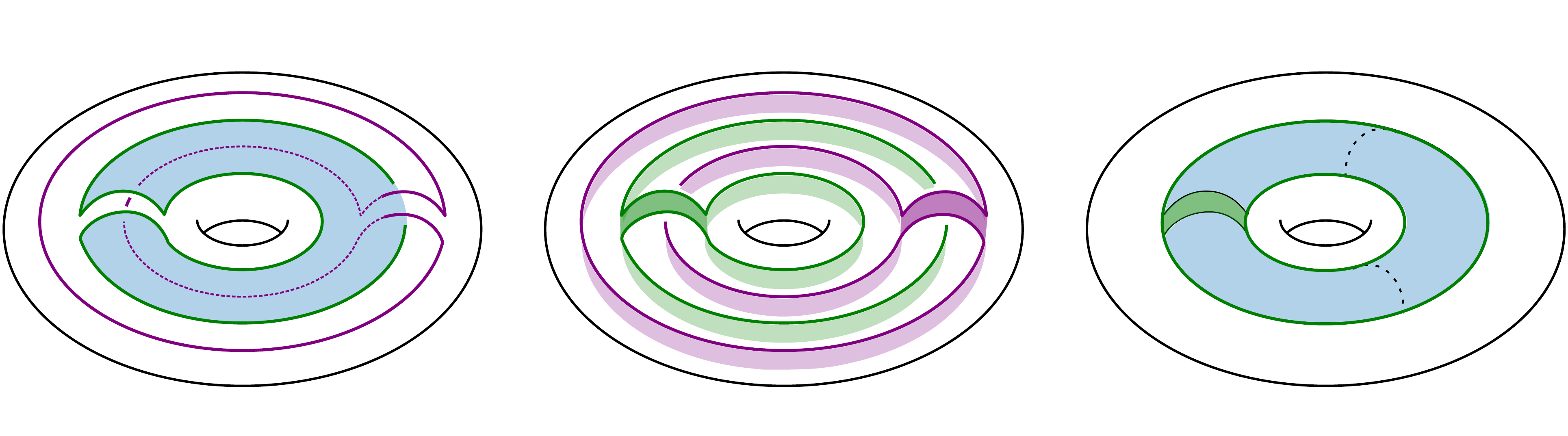
\caption{Throughout, we see $\partial \Sigma_1$ (resp. $\partial \Sigma_2$) as the green (resp. purple) curves inside $U \approx (\partial D^2) \times D^2 \subset S^3$.   {Left:} The disk $D \subset U$ with $\partial D \approx \partial \Sigma_1$.   {Middle:} Both $\Sigma_1$ and $\Sigma_2$ are formed from two disk fibers of $D^2 \times (\partial D^2)$ and a band inside $U$. The band $b$ joins $D^2 \times \{x\}$ and $D^2 \times \{y\}$ to form $\Sigma_1'$.   {Right:} The band $b$ and the disk $D$ together form an annulus $A$ inside the solid torus $U$.}
\label{fig:pieces}
\end{figure}

\begin{lemma}\label{lem:doubling-preserves-brunnian}
Let  $\Sigma = \Sigma_1 \sqcup \cdots \sqcup \Sigma_n \subseteq B^4$ be a Brunnian surface link with $\Sigma_1 \cong D^2$. Then $\BD(\Sigma) := \BD(\Sigma_1) \sqcup \Sigma_2 \sqcup \cdots \sqcup \Sigma_n$ is Brunnian.
\end{lemma}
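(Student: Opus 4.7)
My approach is to verify the Brunnian condition for $\widehat{\Sigma}$ by considering separately the two types of components that can be removed, and in each case producing an ambient isotopy of $B^4$ to a standard Seifert surface for an unlink. If one of the two disks of $\BD(\Sigma_1)$ is removed, leaving $D^*\sqcup\Sigma_2\sqcup\cdots\sqcup\Sigma_n$, I would start by applying the Brunnian property of $\Sigma$: deleting $\Sigma_1$ from $\Sigma$ produces an unlinked surface, so there is an ambient isotopy $F_t$ of $B^4$ carrying $\Sigma_2\sqcup\cdots\sqcup\Sigma_n$ to a standard Seifert surface for an unlink (disjoint standard disks in disjoint $4$-balls). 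By choosing $\nu\Sigma_1$ thin enough to be disjoint from the other $\Sigma_j$, its image $F_1(\nu\Sigma_1)\cong D^2\times D^2$ remains disjoint from these standardized components. The remaining step is to isotope $F_1(D^*)$ inside $F_1(\nu\Sigma_1)$ to a small standard disk lying in a $4$-ball near the boundary, using the explicit local form of the Bing double: each component of $\BD(\Sigma_1)$, viewed inside the model $\nu\Sigma_1\cong D^2\times D^2$, is isotopic to a small unknotted slice disk contained in a collar of the boundary. The result is a standard Seifert surface for an unlink.

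If instead some $\Sigma_i$ with $i\ge 2$ is removed, I would invoke the Brunnian property of $\Sigma$ to obtain an ambient isotopy $G_t$ of $B^4$ taking $\Sigma_1\sqcup\Sigma_2\sqcup\cdots\widehat{\Sigma_i}\cdots\sqcup\Sigma_n$ to a standard Seifert surface, so that in particular $G_1(\Sigma_1)$ is a standard disk. Applying Lemma~\ref{lem:doubling-preserves-isotopy}, the ambient isotopy $G_t$ carries $\BD(\Sigma_1)$ to a surface link ambient isotopic to $\BD(G_1(\Sigma_1))$. Assembled with the other standardized $G_1(\Sigma_j)$'s in disjoint $4$-balls, this gives a standard Seifert surface for an unlink, once one knows that the Bing double of a standard disk is itself unlinked.

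The hard part, needed in both cases, is this local claim: $\BD$ applied to a standard slice disk for the unknot in $D^2\times D^2$ is ambient isotopic to a pair of disjoint standard disks in small $4$-balls near the boundary (and each component individually is isotopic to such a small disk). I would verify this by a direct manipulation of the four-parallels-and-two-bands model of Figure~\ref{fig:BingDouble_Banding}: in the standard case, the two banded disks can be pulled apart inside $D^2\times D^2$ and pushed into disjoint collars of $S^1\times D^2\subseteq\partial(D^2\times D^2)$, after which they are evidently standard. This local, model-level calculation is the main technical point in the proof.
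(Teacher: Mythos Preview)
Your proposal is correct and follows essentially the same approach as the paper's proof: both split into the two cases (removing a component of $\BD(\Sigma_1)$ versus removing some $\Sigma_i$ with $i\ge 2$), invoke Lemma~\ref{lem:doubling-preserves-isotopy} for the second case, and rely on the same local fact that the Bing double of a standard disk is unlinked (and each component individually is an unknotted split disk). The paper's version is more terse---in the first case it simply observes that the surviving Bing-double component is an unknotted disk split from $\Sigma_2\sqcup\cdots\sqcup\Sigma_n$, rather than threading through an explicit ambient isotopy $F_t$---but the content is the same.
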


\begin{proof}
Removing $\Sigma_i$ for $i \geq 2$ from $\BD(\S)$ yields a surface link smoothly isotopic to an unlinked surface with one component Bing doubled, by Lemma~\ref{lem:doubling-preserves-isotopy} with $\CAT=\DIFF$.  This is also unlinked. On the other hand, removing one component of $\BD(\Sigma_1)$ renders the second one an unknotted disk, split from $\Sigma_2 \sqcup \cdots \sqcup \Sigma_n$. Since the latter surface link is also unlinked by the Brunnian hypothesis, $\BD(\Sigma)$ is indeed Brunnian as desired.
\end{proof}

\begin{lem}\label{lem:view}
Let $\Sigma = \Sigma_1 \cup \Sigma_2$ denote the Bing double of the core disk $D^2 \! \times \! \{0\}$ in $D^2 \! \times \! D^2$,  and let $\Sigma_1'$ be a properly embedded disk in $D^2 \! \times \! D^2$ obtained as the push-in of a standard Seifert disk $D \subset S^3$ for $\partial \Sigma_1$, depicted in Figure~\ref{fig:pieces}(a). The surface links $\Sigma = \Sigma_1 \cup \Sigma_2$ and $\Sigma' = \Sigma_1' \cup \Sigma_2$ are smoothly isotopic rel.~boundary in $D^2 \! \times \! D^2$.
\end{lem}

\begin{proof}
We will show that the 2-sphere $\Sigma_1 \cup D$ bounds a 3-ball $\Delta$ in $D^2 \! \times \! D^2$ whose interior is disjoint from $\Sigma_2$. This implies that, in the complement of $\Sigma_2$, the disk $\Sigma_1$ is isotopic (rel.~boundary) to the push-in of the disk $D$, as desired.

 Let $U$ denote the solid torus $(\partial D^2) \! \times \! D^2\subset S^3$ containing $\partial \Sigma$. Note that each circle fiber  $(\partial D^2) \! \times \! \{pt\}$ bounds a disk  $D^2 \! \times \! \{pt\}$ inside $D^2 \! \times \! D^2$, and that distinct circle fibers bound disjoint disks. Each of $\Sigma_1$ and $\Sigma_2$ is formed from a pair of such disks by joining them via a band  inside the solid torus $U$. In particular, suppose that $\Sigma_1$ is formed from two disk fibers  $D^2 \! \times \! \{x\}$ and $D^2  \! \times  \! \{y\}$  via a band $b$, as in Figure~\ref{fig:pieces}(b). Also note that $D$ and $\Sigma_2$ meet along a single ribbon intersection.

 Together, the band $b\subset \Sigma_1$ and the Seifert disk $D$ form an annulus $A$ cobounded by the circle fibers $\partial D^2 \! \times \! \{x\}$ and $ \partial D^2  \! \times  \! \{y\}$. We see the annulus $A$ in Figure~\ref{fig:pieces}(c). Moreover, we observe that all of $A$ decomposes as a union of circle fibers. Taking the union of the disjoint disks in $D^2 \! \times \! D^2$ bounded by these circle fibers  yields a 3-ball $\Delta$ with boundary
$$\partial \Delta =  \left(D^2 \! \times \! \{x\}\right) \, \cup \, A \, \cup \,  \left( D^2  \! \times  \! \{y\}\right),$$
as depicted in  Figure~\ref{fig:decompose}. Finally, by rewriting $A$ as $D \cup b$ and noting that $$\Sigma_1 = \left(D^2 \! \times \! \{x\}\right) \, \cup \, b \, \cup \,  \left( D^2  \! \times  \! \{y\}\right),$$
we conclude that $\partial \Delta = \Sigma_1 \cup D$, as desired; see the right side of Figure~\ref{fig:decompose}. Moreover, we see that $\Delta$ meets $\Sigma_2$ only along the ribbon intersection between $\Sigma_2$ and $D$. 
\end{proof}

\begin{figure}[h] \center
  \def\svgwidth{.85\linewidth}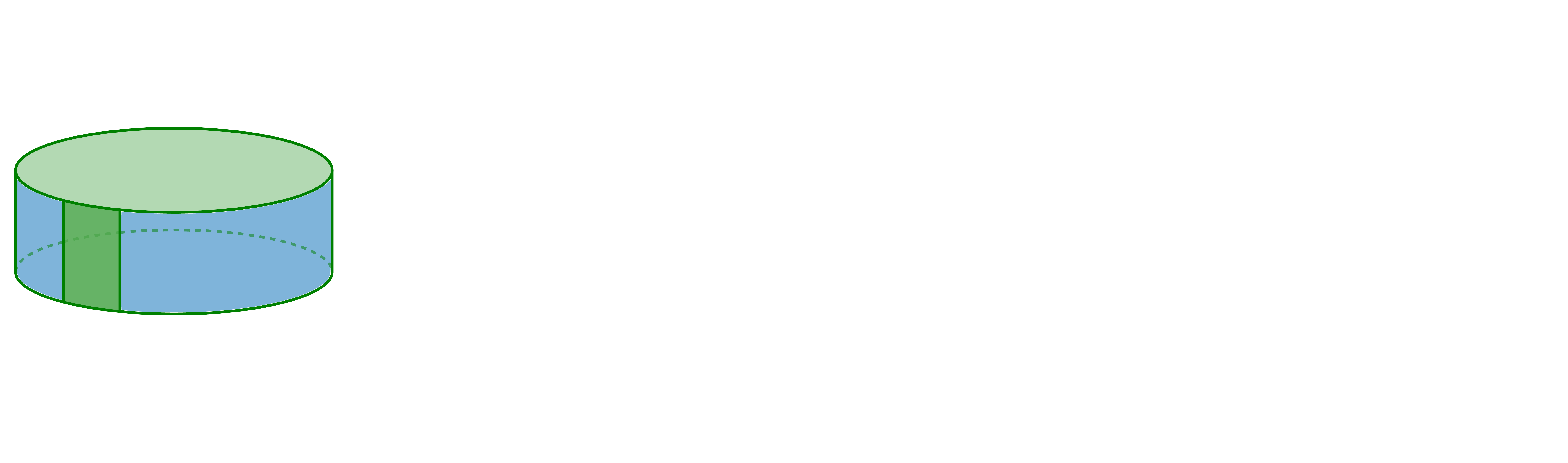
\caption{  {Left:} The 3-ball $\Delta$ is realized as $D^2 \times D^1$.   {Middle:} The boundary of the 3-ball, $\partial \Delta$, is decomposed as $(D^2 \times \{x\}) \cup A \cup (D^2 \times \{y\})$.   {Right:} By realizing $A$ as $b \cup D$, and the band $b$ as part of $\Sigma_1$, we deduce that $\partial \Delta$ is exactly $D \cup \Sigma_1$.}
\label{fig:decompose}
\end{figure}

\section{An exotic pair of disk links }\label{section:exotic-pair-disk-links}

We begin with the pair of disks $D_1$ and $D_2$ in $B^4$ depicted in Figure~\ref{fig:initial-disks}; these are obtained using the construction from \cite[Section 2]{Hayden} (as applied to the link $L14n_{40949}$), but are distinct from the examples used in that paper.  The figure shows two handle diagrams for $B^4$. To see this, note that by erasing the gray disk from either picture, the remaining dotted and $0$-labeled components form a Hopf link, corresponding to a cancelling 1- and 2- handle pair. These disks are disjoint from the 1-handle curves and all intersections between the disks and the 2-handles' attaching regions occur in the disks' interiors. Therefore, after pushing the disks' interiors into the 0-handle, the disks are indeed embedded in $B^4$. %disks $D_i$ intersect the attaching regions of the 2-handles only where shown in each picture, so each disk is indeed embedded in $B^4$.

By construction, these disks are bounded by the same knot $K$ in $S^3$, which we redraw in the standard diagram for $S^3$ in Figure~\ref{fig:slice-knot}.

\begin{figure}[h]\center
\def\svgwidth{\linewidth}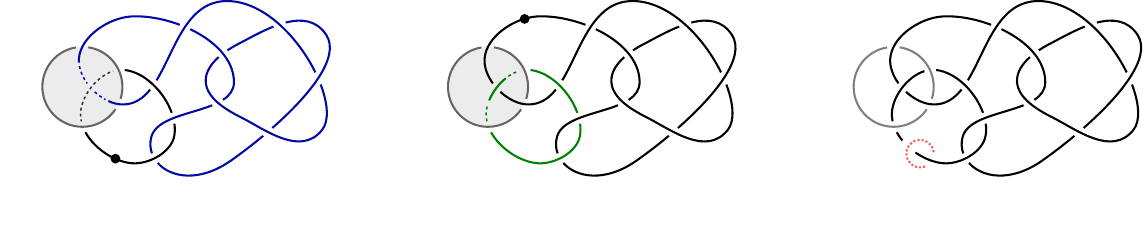
\caption{Parts (a) and (b) depict disks $D_1$ and $D_2$ in non-standard handle diagrams of $B^4$. Part (c)  depicts the slice knot $K=\partial D_1 = \partial D_2$ and a distinguished curve  $\gamma \subseteq S^3 \sm K$, drawn in a non-standard surgery description of $S^3$.}\label{fig:initial-disks}
\end{figure}

%\begin{figure}[h!]\center
%\includegraphics[scale=.7]{KyleDiskPair.pdf}
%\caption{A pair of exotic disks, $D_1$ and $D_2$, in an unusual picture of $B^4$.}
%\label{fig:Kyle'sPair}
%\end{figure}

\begin{figure}\center
\def\svgwidth{.27\linewidth}%% Creator: Inkscape 1.0.1 (c497b03c, 2020-09-10), www.inkscape.org
%% PDF/EPS/PS + LaTeX output extension by Johan Engelen, 2010
%% Accompanies image file '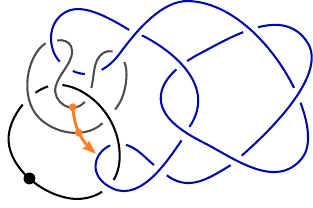' (pdf, eps, ps)
%%
%% To include the image in your LaTeX document, write
%%   \input{<filename>.pdf_tex}
%%  instead of
%%   \includegraphics{<filename>.pdf}
%% To scale the image, write
%%   \def\svgwidth{<desired width>}
%%   \input{<filename>.pdf_tex}
%%  instead of
%%   \includegraphics[width=<desired width>]{<filename>.pdf}
%%
%% Images with a different path to the parent latex file can
%% be accessed with the `import' package (which may need to be
%% installed) using
%%   \usepackage{import}
%% in the preamble, and then including the image with
%%   \import{<path to file>}{<filename>.pdf_tex}
%% Alternatively, one can specify
%%   \graphicspath{{<path to file>/}}
%% 
%% For more information, please see info/svg-inkscape on CTAN:
%%   http://tug.ctan.org/tex-archive/info/svg-inkscape
%%
\begingroup%
  \makeatletter%
  \providecommand\color[2][]{%
    \errmessage{(Inkscape) Color is used for the text in Inkscape, but the package 'color.sty' is not loaded}%
    \renewcommand\color[2][]{}%
  }%
  \providecommand\transparent[1]{%
    \errmessage{(Inkscape) Transparency is used (non-zero) for the text in Inkscape, but the package 'transparent.sty' is not loaded}%
    \renewcommand\transparent[1]{}%
  }%
  \providecommand\rotatebox[2]{#2}%
  \newcommand*\fsize{\dimexpr\f@size pt\relax}%
  \newcommand*\lineheight[1]{\fontsize{\fsize}{#1\fsize}\selectfont}%
  \ifx\svgwidth\undefined%
    \setlength{\unitlength}{150.11510053bp}%
    \ifx\svgscale\undefined%
      \relax%
    \else%
      \setlength{\unitlength}{\unitlength * \real{\svgscale}}%
    \fi%
  \else%
    \setlength{\unitlength}{\svgwidth}%
  \fi%
  \global\let\svgwidth\undefined%
  \global\let\svgscale\undefined%
  \makeatother%
  \begin{picture}(1,0.66235546)%
    \lineheight{1}%
    \setlength\tabcolsep{0pt}%
    \put(0,0){\includegraphics[width=\unitlength,page=1]{slide.pdf}}%
    \put(0.69737982,0.01761345){\color[rgb]{0,0.03529412,0.63921569}\makebox(0,0)[lt]{\lineheight{1.25}\smash{\begin{tabular}[t]{l}$0$\end{tabular}}}}%
    \put(-0.00756256,0.42550195){\color[rgb]{0.30196078,0.30196078,0.30196078}\makebox(0,0)[lt]{\lineheight{1.25}\smash{\begin{tabular}[t]{l}$K$\end{tabular}}}}%
  \end{picture}%
\endgroup%

\caption{Redrawing $K$ in the standard diagram of $S^3$.}\label{fig:slide}
\end{figure}

\begin{prop}\label{prop:initial-disks}
The disks $D_1$ and $D_2$ are topologically ambiently isotopic rel.\ boundary.
\end{prop}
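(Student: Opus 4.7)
The approach is to invoke the topological uniqueness theorem for slice disks with infinite cyclic exterior group: if $D, D' \subseteq B^4$ are two smooth slice disks for a knot $K \subseteq S^3$ with $\pi_1(B^4 \smallsetminus \nu D) \cong \Z \cong \pi_1(B^4 \smallsetminus \nu D')$, then $D$ and $D'$ are topologically ambiently isotopic rel.\ boundary. This is due to Conway--Powell and ultimately rests on Freedman's disk embedding theorem (applicable because $\Z$ is a good group). The task therefore reduces to verifying that each disk exterior $X_i := B^4 \smallsetminus \nu D_i$ has infinite cyclic fundamental group.

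\textbf{Step 1: Convert to exterior diagrams.} The diagrams in Figure~\ref{fig:initial-disks}(a),(b) exhibit each $D_i$ as a properly embedded disk inside the handlebody $B^4$, which is built from a canceling dotted/$0$-framed Hopf pair. To compute $\pi_1(X_i)$, reinterpret each picture as a Kirby diagram for $X_i$ itself: the gray slice disk becomes the co-core of a $1$-handle of the exterior, obtained by replacing the $0$-framed $2$-handle that caps the boundary meridian of $D_i$ with a dotted $1$-handle (equivalently, by removing a collar of $D_i$). This standard surgery on the Kirby diagram produces a handle decomposition of $X_i$ with only $0$--, $1$--, and $2$--handles, from which a Wirtinger-type presentation of $\pi_1(X_i)$ may be extracted.

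\textbf{Step 2: Verify $\pi_1(X_i) \cong \Z$.} In the resulting presentation, the $1$-handles give generators and the $2$-handle attaching circles give relators. Since $H_1(X_i) \cong \Z$ (by Alexander duality, as $D_i$ is a slice disk), any surjection $\pi_1(X_i) \twoheadrightarrow \Z$ factors through abelianization, and it suffices to show the presented group is abelian, generated by a single element represented by a meridian of $D_i$. This is a finite combinatorial manipulation; because the diagrams are intricate, it is natural (and consistent with the authors' use of an appendix for computer-assisted calculations) to verify the reductions with the aid of a computer algebra system. Carrying out this check for both $i=1$ and $i=2$ completes the hypothesis of the Conway--Powell theorem, from which Proposition~\ref{prop:initial-disks} follows.

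\textbf{Main obstacle.} The conceptual framework is entirely standard; the real work lies in the presentation simplification for $\pi_1(X_i)$ from the nonstandard handle diagrams. The diagrams were specifically engineered (following Hayden's recipe) so that this computation comes out to $\Z$, but checking it by hand would be error-prone, so the key technical effort is the computer-assisted simplification.
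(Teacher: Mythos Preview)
Your proposal is correct and follows essentially the same strategy as the paper: invoke Conway--Powell by verifying $\pi_1(B^4\setminus \nu D_i)\cong\Z$ from a handle diagram of the disk exterior. The only divergence is in execution: you anticipate an intricate, computer-assisted group simplification, but in fact the paper carries this out by hand. After a short sequence of diagram manipulations (Figure~\ref{fig:pi1}), each exterior is presented with two $1$-handles and a single $2$-handle, yielding two-generator one-relator presentations
\[
\langle x,y \mid x^{-1}xy^{-1}x^{-1}y\rangle \quad\text{and}\quad \langle x,y \mid xx^{-1}x^{-1}xyx^{-1}y^{-1}\rangle,
\]
which collapse immediately to $\Z$. (The computer-assisted appendices in the paper concern hyperbolicity and isometry groups via SnapPy, not this $\pi_1$ computation.)
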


\begin{proof}
The disks are smoothly embedded and are bounded by the same knot in $S^3$, so it suffices to show that these disks' exteriors have $\pi_1 \cong \Z$ \cite[Theorem~1.2]{ConwayPowell}. Handle diagrams for these disk exteriors are drawn in parts (a-1) and (b-1) of Figure~\ref{fig:pi1}; the remaining parts of Figure~\ref{fig:pi1} manipulate and decorate these diagrams to simplify the $\pi_1$ calculation. By tracing the 2-handle curves from parts (a-2) and (b-5) starting at the labeled arrows, we obtain the following presentations:
\begin{align*}
\pi_1(B^4 \setminus D_1) &= \langle x,y \mid x^{-1} x y^{-1} x^{-1} y =1 \rangle = \langle x,y \mid y^{-1} x^{-1} y =1 \rangle  \cong \Z \\
\pi_1(B^4 \setminus D_2) &= \langle x,y \mid x x^{-1} x^{-1} x y x^{-1} y^{-1}=1 \rangle = \langle x,y \mid y x^{-1} y^{-1}=1 \rangle  \cong \Z
\end{align*}
It follows that the disks are topologically ambiently isotopic rel.\ boundary.
% But they are smoothly inequivalent \cite{Hayden}, hence form an exotic pair. (We will not directly use the latter fact.)
\end{proof}

\begin{figure}\center
\def\svgwidth{\linewidth}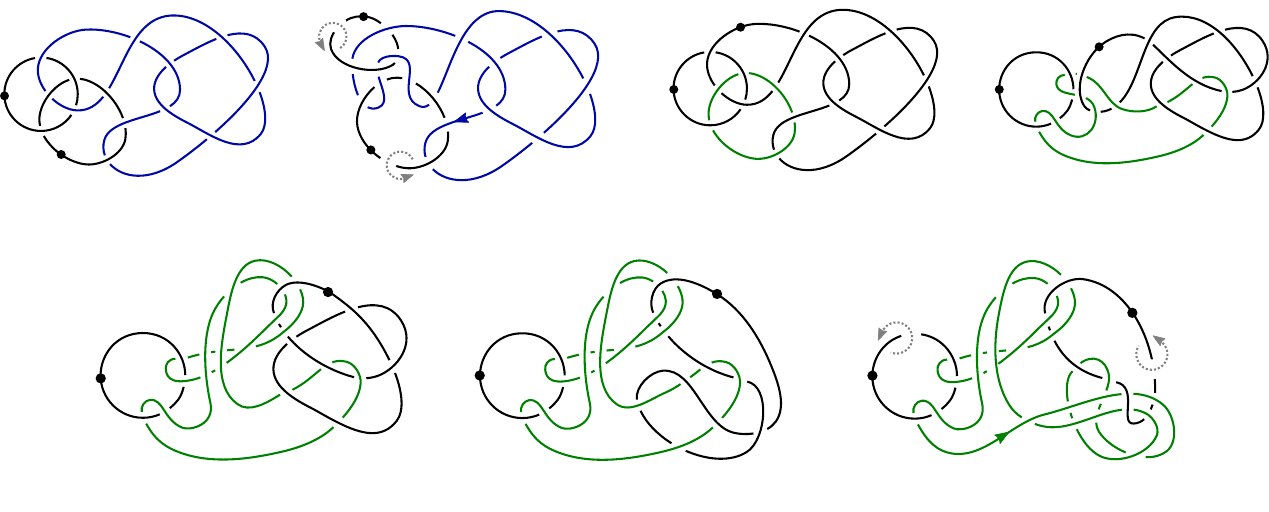
\caption{Redrawing the disk exteriors to compute their fundamental groups.}\label{fig:pi1}
\end{figure}

\begin{figure}\center
\def\svgwidth{.75\linewidth}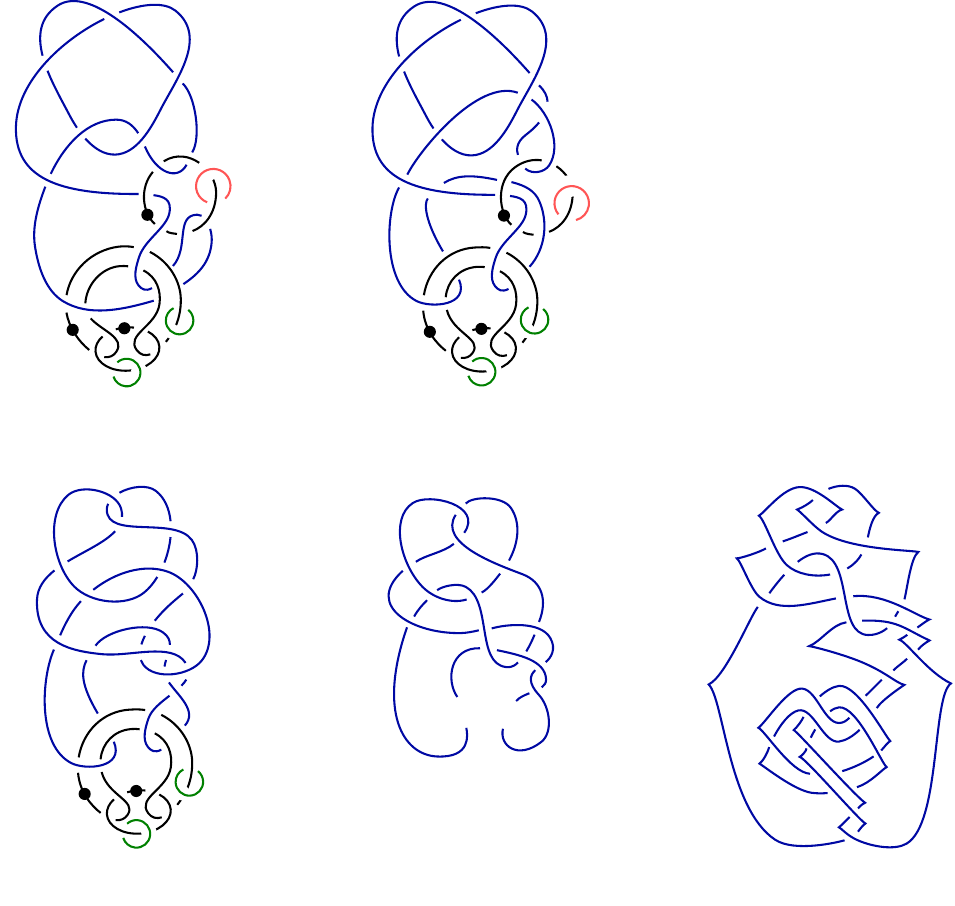
\caption{Attaching three 2-handles to the exterior of $\BD(D_1)$ and simplifying. All steps except (b) to (c) and (e) to (f) are isotopies. From (b) to (c), we cancel the red 2-handle with a 1-handle, introducing a full twist into all blue strands passing through the 1-handle. Diagrams (e) and (f) are related as in Figure~\ref{fig:BDhandles}.}\label{fig:add-handles}
\end{figure}

\begin{figure}\center
\def\svgwidth{\linewidth}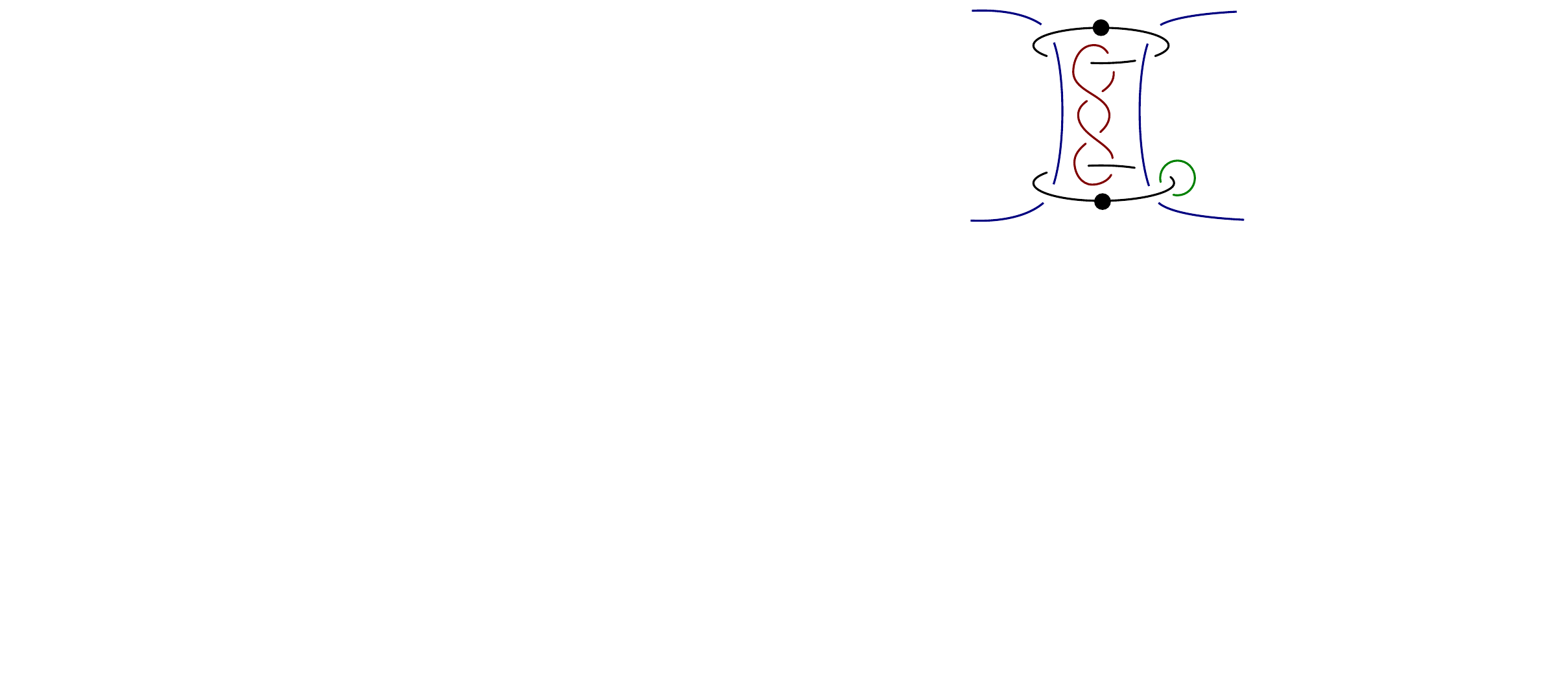
\caption{The process of simplifying (e) to (f) in Figure~\ref{fig:add-handles}. From (a) to (b), we add a cancelling handle pair. From (b) to (c), we remove a cancelling pair. From (c) to (d) to (e) is simply isotopy of the diagram. From (e) to (f), we remove a cancelling pair, and then isotope to obtain (g). From (g) to (h), we do one handle slice, and then from (h) to (i) we remove a cancelling pair and further isotope to obtain (j).}\label{fig:BDhandles}
\end{figure}

% have the same boundary $K$ in $S^3$, that is $\partial D_1 = K = \partial D_2$.

Now take the Bing doubles of the disks $D_1$ and $D_2$. Note that each of $\BD(D_1)$ and $\BD(D_2)$ are bounded by the 2--component link $\BD(K)$.  

\begin{corollary}
  The disk links $\BD(D_1)$ and $\BD(D_2)$ are topologically ambiently isotopic rel.\ boundary.
\end{corollary}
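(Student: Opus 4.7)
The plan is to invoke Lemma~\ref{lem:doubling-preserves-isotopy} directly, with $\CAT=\TOP$ and $n=1$, viewing each $D_i$ as a one-component ordered surface link. Proposition~\ref{prop:initial-disks} supplies exactly the hypothesis needed: a $\TOP$-ambient isotopy of $B^4$ rel.\ boundary carrying $D_1$ to $D_2$. The conclusion of the lemma is then precisely the statement of the corollary, so once we confirm that the remaining hypothesis $\partial \BD(D_1) = \partial \BD(D_2)$ is satisfied, we are done.

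The only point to check is therefore that the two Bing doubles agree on $S^3$. Since $\partial D_1 = \partial D_2 = K$, a tubular neighborhood of $K$ in $S^3$ admits a framing, and we may choose the identifications $\nu D_1 \cong D^2\times D^2 \cong \nu D_2$ used to construct the Bing doubles so that they restrict to this common framing on $S^3$. With these compatible choices, $\partial \BD(D_1)$ and $\partial \BD(D_2)$ are each equal to the same Bing double $\BD(K)\subseteq \nu K \subseteq S^3$. This choice is made without loss of generality, because the isotopy class of each $\BD(D_i)$ is independent of the tubular neighborhood identification used in its construction (see the discussion at the end of Section~\ref{background-bing-doubling}). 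Applying Lemma~\ref{lem:doubling-preserves-isotopy} now completes the proof; no genuine obstacle is expected, since the lemma already packages the work of extending the ambient topological isotopy of $D_1$ to $D_2$ across a tubular neighborhood and hence across the four parallel copies and two bands making up each Bing double.
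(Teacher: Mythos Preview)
Your proof is correct and follows exactly the paper's approach: the paper's proof is the single sentence ``This follows from combining Proposition~\ref{prop:initial-disks} and Lemma~\ref{lem:doubling-preserves-isotopy} with $\CAT=\TOP$.'' Your additional care in verifying the boundary hypothesis $\partial\BD(D_1)=\partial\BD(D_2)$ is a reasonable elaboration of a point the paper leaves implicit.
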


\begin{proof}
This follows by combining Proposition~\ref{prop:initial-disks} and Lemma~\ref{lem:doubling-preserves-isotopy} with $\CAT=\TOP$.
%, $\BD(D_1)$ and $\BD(D_2)$ are topologically ambiently isotopic rel.\ boundary.
\end{proof}

By Lemma~\ref{lem:doubling-preserves-brunnian}, $\BD(D_1)$ and $\BD(D_2)$ are Brunnian.  To prove Theorem~\ref{thm:intro-disk-links}, it therefore remains to establish the following proposition.

\begin{prop}\label{prop:base-case}
  The disk links $\BD(D_1)$ and $\BD(D_2)$ are not smoothly equivalent.
\end{prop}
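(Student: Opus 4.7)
My plan follows the strategy flagged in the introduction: identify a curve $\gamma$ in the boundary $S^3 \smallsetminus L$, where $L := \partial \BD(D_1) = \partial \BD(D_2)$, whose isotopy class is preserved by any hypothetical diffeomorphism of pairs $(B^4, \BD(D_1)) \to (B^4, \BD(D_2))$, and then to show that $\gamma$ bounds a smooth disk in $B^4 \smallsetminus \BD(D_2)$ but not in $B^4 \smallsetminus \BD(D_1)$. The candidate curve is the red curve $\gamma$ in Figure~\ref{fig:initial-disks}(c); it is disjoint from (a representative of) $K$, and after choosing such a representative inside the neighborhood of $K$ used to form $\BD(K)$, we may regard $\gamma \subseteq S^3 \smallsetminus L$. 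The first task is to argue that $\gamma$ is characterized intrinsically in the pair $(S^3,L)$ up to isotopy and orientation reversal---for instance via its primitive homology class in $H_1(S^3 \smallsetminus L)$ together with its (small) geometric intersection number with a natural Seifert surface for $L$, combined with the symmetry of $(S^3,L)$ that interchanges the two components of $L$. Once this uniqueness is verified, any diffeomorphism of pairs carries $\gamma$ to a curve isotopic to $\gamma$ in $S^3 \smallsetminus L$, which extends to an ambient isotopy of the 4-ball.

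The second step is to exhibit an explicit smooth disk bounded by $\gamma$ in $B^4 \smallsetminus \BD(D_2)$. The handle diagram of $(B^4, D_2)$ in Figure~\ref{fig:initial-disks}(b) should reveal an obvious obvious (or easily isotoped) smooth disk for $\gamma$ disjoint from $D_2$; since Bing doubling takes place inside an arbitrarily small tubular neighborhood of $D_2$, the same disk, pushed off appropriately, remains disjoint from $\BD(D_2)$. The asymmetry with $D_1$ that makes the slice disk exist for $D_2$ but not for $D_1$ is inherited from the original Hayden construction \cite{Hayden}.

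The main obstacle is the third step: obstructing a smooth disk in $B^4 \smallsetminus \BD(D_1)$ with boundary $\gamma$. My plan is to pass to the double cover of $B^4$ branched along $\BD(D_1)$. Since each component of $\BD(D_1)$ is a disk, the branched cover is a smooth 4-manifold whose boundary is the double cover of $S^3$ branched along $L$; a hypothetical smooth slice disk for $\gamma$ in the exterior of $\BD(D_1)$ would lift to a smooth surface in the branched cover with controlled topology, bounded by a concrete knot $\widetilde\gamma$ obtained by lifting $\gamma$. A Khovanov-theoretic sliceness obstruction (typically Rasmussen's $s$-invariant) applied to $\widetilde\gamma$ should then force a contradiction. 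The essential difficulty is twofold: first, to draw a workable surgery/handle diagram in which $\widetilde\gamma$ is presented as a classical knot in $S^3$ after trading the branched-cover structure for surgery; second, to keep the resulting diagram small enough that the $s$-invariant can actually be computed. I expect both to be achieved via the computer-assisted calculations promised in the appendices, building on the input diagrams in Figure~\ref{fig:initial-disks} and using software such as \texttt{KnotJob} or Bar-Natan's Khovanov program to finish the computation.
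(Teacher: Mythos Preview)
Your overall strategy matches the paper's, but both of the two substantive steps diverge from what the paper actually does, and in each case your version has a genuine gap.

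\textbf{Fixing $\gamma$ under diffeomorphisms.} Your proposed characterization of $\gamma$---via its homology class in $H_1(S^3\smallsetminus L)$ and an intersection number with a Seifert surface---cannot pin down an isotopy class of a curve; there are infinitely many non-isotopic curves sharing any such data. The paper instead uses 3-manifold rigidity. The JSJ decomposition of $S^3\smallsetminus\nu\BD(K)$ is $(S^3\smallsetminus\nu K)\cup(\text{Borromean exterior})$, so by uniqueness of JSJ any self-diffeomorphism of $(S^3,\BD(K))$ restricts (up to isotopy) to a self-diffeomorphism of $S^3\smallsetminus\nu K$. Lemma~\ref{lem:hyperbolic} (a SnapPy/Sage computation) shows $S^3\smallsetminus K$ is hyperbolic with \emph{trivial} isometry group; Mostow rigidity together with Waldhausen and Hatcher then forces every self-diffeomorphism of $S^3\smallsetminus\nu K$ to be isotopic to the identity. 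Hence $\gamma$ is fixed. This is the content of the computer calculation in Appendix~\ref{appendixa}, not an $s$-invariant computation.

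\textbf{Obstructing the slice disk for $\gamma$ in the exterior of $\BD(D_1)$.} Your branched-cover plan has a structural problem: the double branched cover of $B^4$ along the two-component disk link $\BD(D_1)$ is not $B^4$, and its boundary is not $S^3$, so Rasmussen's $s$-invariant does not apply to the lift $\widetilde\gamma$ without substantial further work. The paper takes a completely different route: it attaches three $(-1)$-framed 2-handles to $B^4\smallsetminus\nu\BD(D_1)$ (one along $\gamma$) and simplifies the resulting handle diagram to a single $(-1)$-framed 2-handle on a Legendrian knot $J$ with $tb(J)=0$. By Eliashberg the resulting $W$ is Stein, and by Lisca--Mati\'c a Stein domain contains no smooth $(-1)$-sphere. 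A slice disk for $\gamma$ in the exterior would cap off with the core of the $\gamma$-handle to give exactly such a sphere, a contradiction. This Stein/adjunction argument is the key idea you are missing.
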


The proof of the proposition uses the following technical lemma.

\begin{lemma}\label{lem:hyperbolic}
The knot complement $S^3 \sm K$ has a hyperbolic structure with trivial isometry group.
\end{lemma}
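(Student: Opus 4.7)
The plan is to carry out a rigorous computer-assisted verification using SnapPy. Starting from the diagram of $K$ in Figure~\ref{fig:slice-knot}, or equivalently from the surgery description of $(S^3,K)$ in Figure~\ref{fig:initial-disks}(c), I would first build an ideal triangulation $\mathcal{T}$ of the knot exterior $S^3 \sm K$ and import it into SnapPy.

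To establish hyperbolicity, the plan is to numerically solve Thurston's gluing and completeness equations for $\mathcal{T}$ to produce an approximate geometric solution (all shape parameters having positive imaginary part), and then certify this solution using SnapPy's verified computation routines. These routines apply the interval Newton / Krawczyk test to produce small complex intervals around the approximate shapes that provably contain an exact simultaneous solution of the gluing and completeness equations. The existence of such a solution implies that $\mathcal{T}$ is a geometric ideal triangulation and hence that $S^3 \sm K$ admits a complete finite-volume hyperbolic structure.

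To establish that the isometry group is trivial, I would invoke Mostow--Prasad rigidity: since $S^3 \sm K$ is a cusped hyperbolic 3-manifold, its isometry group is finite and equal to the group of self-homeomorphisms of $(S^3, K)$ modulo isotopy. This group can be computed rigorously by SnapPy's \texttt{symmetry\_group} method, which constructs the canonical (Epstein--Penner) cell decomposition of $S^3 \sm K$ and enumerates its combinatorial self-isomorphisms; using verified interval arithmetic, the canonical decomposition itself can be certified, making the resulting isometry group computation rigorous. The expected output on our $K$ is the trivial group.

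The main obstacle is ensuring that the verification is actually a proof rather than a floating-point heuristic. Plain SnapPy computations return only numerical approximations, so at every stage I must use its verified (interval arithmetic) counterparts: verified shapes for hyperbolicity, and a verified canonical retriangulation for the symmetry group. This technology is now standard, but the details, including the precise triangulation of $S^3 \sm K$ used and the verification script, will be relegated to the appendix that the paper already reserves for computer-assisted calculations.
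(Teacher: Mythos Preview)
Your proposal is correct and essentially matches the paper's own proof: the authors likewise invoke SnapPy (run inside Sage) with verified hyperbolicity and a verified canonical retriangulation, then count its combinatorial self-isomorphisms to conclude the isometry group is trivial, deferring the script and DT code to an appendix. The only cosmetic difference is that they call \texttt{isomorphisms\_to} on the verified canonical retriangulation rather than \texttt{symmetry\_group}, but the underlying method is the same Epstein--Penner based computation you describe.
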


\begin{proof}
This is verified using SnapPy \cite{snappy} and Sage \cite{sagemath}; see Appendix~\ref{appendixa} for additional documentation regarding this calculation.
\end{proof}

\begin{proof}[Proof of Proposition~\ref{prop:base-case}]
  To distinguish $\BD(D_1)$ and $\BD(D_2)$, we will examine the simple closed curve $\gamma \subseteq S^3 \sm K$ shown in Figure~\ref{fig:initial-disks}(c). This curve is redrawn in the exteriors of $\BD(D_1)$ and $\BD(D_2)$ in Figure~\ref{fig:BD-exteriors}. The curve $\gamma$ bounds a smooth disk in the exterior of $\BD(D_2)$, since it bounds an obvious disk in the diagram from Figure~\ref{fig:BD-exteriors}(b) that only intersects the 2-handle attaching circle (and not the 1-handle curve).

\begin{figure}\center
\def\svgwidth{.7\linewidth}%% Creator: Inkscape 1.0.1 (c497b03c, 2020-09-10), www.inkscape.org
%% PDF/EPS/PS + LaTeX output extension by Johan Engelen, 2010
%% Accompanies image file '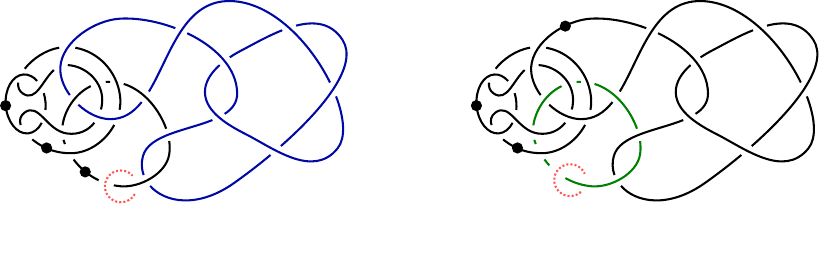' (pdf, eps, ps)
%%
%% To include the image in your LaTeX document, write
%%   \input{<filename>.pdf_tex}
%%  instead of
%%   \includegraphics{<filename>.pdf}
%% To scale the image, write
%%   \def\svgwidth{<desired width>}
%%   \input{<filename>.pdf_tex}
%%  instead of
%%   \includegraphics[width=<desired width>]{<filename>.pdf}
%%
%% Images with a different path to the parent latex file can
%% be accessed with the `import' package (which may need to be
%% installed) using
%%   \usepackage{import}
%% in the preamble, and then including the image with
%%   \import{<path to file>}{<filename>.pdf_tex}
%% Alternatively, one can specify
%%   \graphicspath{{<path to file>/}}
%% 
%% For more information, please see info/svg-inkscape on CTAN:
%%   http://tug.ctan.org/tex-archive/info/svg-inkscape
%%
\begingroup%
  \makeatletter%
  \providecommand\color[2][]{%
    \errmessage{(Inkscape) Color is used for the text in Inkscape, but the package 'color.sty' is not loaded}%
    \renewcommand\color[2][]{}%
  }%
  \providecommand\transparent[1]{%
    \errmessage{(Inkscape) Transparency is used (non-zero) for the text in Inkscape, but the package 'transparent.sty' is not loaded}%
    \renewcommand\transparent[1]{}%
  }%
  \providecommand\rotatebox[2]{#2}%
  \newcommand*\fsize{\dimexpr\f@size pt\relax}%
  \newcommand*\lineheight[1]{\fontsize{\fsize}{#1\fsize}\selectfont}%
  \ifx\svgwidth\undefined%
    \setlength{\unitlength}{392.84570409bp}%
    \ifx\svgscale\undefined%
      \relax%
    \else%
      \setlength{\unitlength}{\unitlength * \real{\svgscale}}%
    \fi%
  \else%
    \setlength{\unitlength}{\svgwidth}%
  \fi%
  \global\let\svgwidth\undefined%
  \global\let\svgscale\undefined%
  \makeatother%
  \begin{picture}(1,0.32840294)%
    \lineheight{1}%
    \setlength\tabcolsep{0pt}%
    \put(0,0){\includegraphics[width=\unitlength,page=1]{BD-disk-exteriors.pdf}}%
    \put(0.10269412,0.07403866){\color[rgb]{1,0.33333333,0.33333333}\makebox(0,0)[lt]{\lineheight{1.25}\smash{\begin{tabular}[t]{l}$\gamma$\end{tabular}}}}%
    \put(0.64450636,0.08417077){\color[rgb]{1,0.33333333,0.33333333}\makebox(0,0)[lt]{\lineheight{1.25}\smash{\begin{tabular}[t]{l}$\gamma$\end{tabular}}}}%
    \put(0.30229297,0.08301148){\color[rgb]{0,0.03529412,0.63921569}\makebox(0,0)[lt]{\lineheight{1.25}\smash{\begin{tabular}[t]{l}$0$\end{tabular}}}}%
    \put(0.73083962,0.07474268){\color[rgb]{0,0.50196078,0}\makebox(0,0)[lt]{\lineheight{1.25}\smash{\begin{tabular}[t]{l}$0$\end{tabular}}}}%
    \put(0.16178889,0.00890623){\makebox(0,0)[lt]{\lineheight{1.25}\smash{\begin{tabular}[t]{l}(a)\end{tabular}}}}%
    \put(0.75798196,0.00890623){\makebox(0,0)[lt]{\lineheight{1.25}\smash{\begin{tabular}[t]{l}(b)\end{tabular}}}}%
  \end{picture}%
\endgroup%

\caption{Handle diagrams for the exteriors of the disk links $\BD(D_1)$ and $\BD(D_2)$ in parts (a) and (b), respectively.}\label{fig:BD-exteriors}
\end{figure}

%We claim that under any putative diffeomorphism $F \colon (B^4,\BD(D_2)) \xrightarrow{\cong} (B^4,\BD(D_1))$, the image $F(\gamma)$ does not bound a smooth disk in $B^4 \sm \BD(D_1)$.

On the other hand, we claim that $\gamma$ does not bound a smoothly embedded disk in the exterior of $\BD(D_1)$. To prove this, we begin by eliminating the 1-handles in the exterior of $BD(D_1)$ by attaching three additional $-1$-framed 2-handles along meridians to the dotted 1-handle curves as shown in Figure~\ref{fig:add-handles}, yielding a larger 4-manifold $W$.
%we begin by attaching three additional $-1$-framed 2-handles to the exterior of $\BD(D_1)$ as shown in Figure~\ref{fig:add-handles}, yielding a larger 4-manifold $W$. 
Note that, if $\gamma$ were to bound a smoothly embedded disk in the exterior of $\BD(D_1)$, then this disk could be capped off with the core of the $-1$-framed 2-handle attached along $\gamma$ to produce a smoothly embedded 2-sphere of square $-1$ in $W$. We will show that no such 2-sphere can exist. After simplifying the handle diagram (Figures~\ref{fig:add-handles}-\ref{fig:BDhandles}), we see that the resulting 4-manifold $W$ is obtained from $B^4$ by attaching a single $-1$-framed 2-handle along a knot $J \subseteq S^3$. In Figure~\ref{fig:add-handles}(f), the knot $J$ has been drawn as a Legendrian knot with Thurston-Bennequin number $tb=0$. (The Thurston-Bennequin number is computed from a Legendrian knot diagram as the diagram's writhe minus the number of right cusps.) Since the 2-handle is attached with framing $-1= tb-1$,  $W$ admits a Stein structure \cite{yasha:stein}. By \cite{lisca-matic}, it follows that $W$ cannot contain any smoothly embedded 2-sphere of square $-1$. It follows that $\gamma$ cannot bound a smoothly embedded disk in the exterior of $\BD(D_1)$.

This proves that $\BD(D_1)$ and $\BD(D_2)$ are not smoothly equivalent rel.\ boundary. To obtain the stronger conclusion stated in the theorem, we will show that any supposed diffeomorphism $F \colon (B^4,\BD(D_1)) \to (B^4,\BD(D_2))$ must fix $\gamma$ up to isotopy. Observe that, along the boundary, $F$ restricts to a self-diffeomorphism of $(S^3,\BD(K))$. It follows that it further restricts to a self-diffeomorphism of $S^3 \setminus \nu \BD(K)$. By Lemma~\ref{lem:hyperbolic}, $K$ has a hyperbolic complement (with trivial isometry group, to be used below). 
A JSJ decomposition of $S^3 \sm \nu \BD(K)$ is therefore given by \[S^3 \sm \nu \BD(K) = (S^3 \sm \nu K )\cup (S^1 \!\! \times \! D^2 \sm \BD(S^1 \! \times \! \{0\})).\]
The latter piece is diffeomorphic to the complement of the Borromean rings.
By uniqueness of JSJ decompositions of 3-manifolds \cite{jaco1979seifert, johannson1979homotopy} (see also Theorem~\ref{thm:key-JSJ-fact}), up to isotopy any diffeomorphism of pairs must restrict on $S^3 \sm \nu K \subseteq S^3 \sm \nu \BD(K)$ to a self-diffeomorphism of $S^3 \sm \nu K$. 
%\SK{there is definitely at least one comma missing from the previous sentence, but I'm not sure where it should go...?}
We need to show that every diffeomorphism of $S^3 \sm \nu K$ is isotopic to the identity to conclude that $\gamma$ is fixed up to isotopy.
By Mostow rigidity \cite{Mostow}, every homotopy equivalence of a hyperbolic 3-manifold is homotopic to an isometry.
Waldhausen's \cite{Waldhausen} work, combined with Hatcher's proof of the Smale conjecture \cite{Hatcher-smale-conj}, shows that for compact, orientable 3-manifolds $N \neq B^3$ with nonempty boundary, homotopic diffeomorphisms are isotopic.
   Combining these two statements, it follows that every diffeomorphism of a hyperbolic knot complement is isotopic to an isometry.
Since the isometry group of $S^3 \sm \nu K$ is trivial, this completes the proof of the claim. \end{proof}

\smallskip

\section{Infinitely many exotic surface links}\label{section:infinitely-many-surface-links}

In this section, we construct the 2--component surface links promised in Theorem~\ref{thm:intro-surface-links}.  In Section~\ref{sec:knotfloer}, we begin with some background on Heegaard Floer cobordism maps, which provides our diffeomorphism obstructions.  In Section~\ref{subsection:rim-surgery}, we review the rim surgery construction. Section~\ref{sec:infinitefamily} constructs infinitely many pairwise exotic Brunnian surface links.

\subsection{Heegaard Floer cobordism maps}\label{sec:knotfloer}

In this section, the obstruction to smooth isotopy comes from maps on link Floer homology. Link Floer homology was introduced by Ozsv\'ath--Szab\'o \cite{osknot,oslink}, while these cobordism maps were later defined by Juh\'asz \cite{juhasz}. 
We refer the reader to e.g.~\cite{JuhaszMillerZemke} for a more detailed description; for our purposes, the following brief description suffices.

\begin{definition}
Let $l$ be a link in a closed, oriented, connected 3-manifold $M$. A \emph{multi-pointed link} $\ell$ is the link $l$ with two basepoints,  labeled $w$ and $z$, chosen per component of $l$.
%Given a link $l$ in a closed, oriented, connected 3-manifold $M$, let $\ell$ denote $l$ with two basepoints (labeled $w$ and $z$) chosen per component of $l$. We call $\ell$ a {\emph{multi-pointed link.}}
\end{definition} 

Let $\DLink^{\times}$ be the groupoid of multi-pointed links in closed, oriented, connected 3-manifolds, where the morphisms are smooth multi-pointed isotopies, considered up to isotopy of isotopies.  
Let $\Vect^{\times}$ be the groupoid of $\mathbb{F}_2$-vector spaces with linear isomorphisms.
Link Heegaard Floer homology gives rise to a functor 
\[\widehat{\HFL} \colon \DLink^{\times} \to\Vect^{\times}.\]
  Given the input of a multi-pointed link $\ell$ in a 3-manifold $M$, we call $\widehat{\HFL}(M,\ell)$ the {\emph{link Floer homology}} of $\ell$.

%\footnote{MP: Actually we should also be considering the isotopies up to isotopy here, otherwise associativity fails and it's not a category (I assume we don't want to get into 2-categories, or worse). Also the claimed invertibility of the morphisms implicit in calling it a groupoid uses isotopy classes. }

%Given a multi-pointed link $\ell$ in $M^3$, there exists a multigraded abelian group 
%\widehat{\HFL}(M,\ell) is %\quad\text{ and }\quad\HFL^-(M,\ell)
%that is an invariant of $\ell$ up to smooth, pointed isotopy.

\begin{definition}
Let $M_0$ and $M_1$ be closed, oriented, connected 3-manifolds.  A {\emph{decorated cobordism}} between multi-pointed links $\ell_0\subseteq M_0$ and $\ell_1\subseteq M_1$ consists of a compact, oriented surface $\S$ properly embedded in a compact, connected, oriented 4-manifold $W$ with:
\begin{enumerate}
\item $\partial W=M_0\sqcup-M_1$;
\item $\partial \S=l_0\sqcup -l_1$; and
\item a collection of arcs $\mathcal{A}$ properly embedded in $\S$ such that:
\begin{itemize}
\item $\mathcal{A}$ does not meet any of the $w$ or $z$ basepoints of $\ell_i$;
\item each component of $\ell_i-\{w$ and $z$ basepoints$\}$ meets exactly one endpoint of $\mathcal{A}$;
\item the components of $\S\setminus\mathcal{A}$ can be sorted into two subsurfaces $\S_w$ and $\S_z$ of $\S$ so that all $w$ basepoints are in $\S_w$ and all $z$ basepoints are in $\S_z$.
\end{itemize}
\end{enumerate}

We may view a properly embedded surface $\S$ in a 4-manifold $W$ with connected boundary as a cobordism from $(\emptyset,\emptyset)$ to the boundary $(M,l)$. In this setting, we can decorate $\S$ after choosing $w$ and $z$ basepoints on each  component of $l$ in an essentially canonical way: take $\S_w$ to consist of small bigons including each $w$ point and $\S_z=\S\setminus \S_w$. We call this a {\emph{trivial decoration of $\S$.}} If no other decoration is specified, then we always assume a surface is trivially decorated. 
\end{definition}

Juh\'asz \cite{juhasz} showed that decorated link cobordisms give rise to an extension of the functor $\widehat{\HFL}$. 
Let $\DLink$ denote the category of multi-pointed links $(M,\ell)$, where the morphisms are smooth ambient isotopy classes of decorated cobordisms $[(W,\S,\mathcal{A})]$. Let $\Vect$ denote the category of $\mathbb{F}_2$-vector spaces with linear transformations as morphisms.  
Since we can consider a multi-pointed isotopy as a decorated cobordism, $\DLink^{\times} \subseteq \DLink$ and $\Vect^{\times} \subseteq \Vect$  are subcategories that contain all the objects but fewer morphisms. Juh\'asz constructed a functor 
\[\widehat{\HFL} \colon \DLink \to \Vect\]
with the same value $\widehat{\HFL}(M,\ell)$ on multi-pointed links  as the functor $\widehat{\HFL} \colon \DLink^{\times}\to\Vect^{\times}$ introduced above; this explains why we use the same notation for both functors. 

%We must understand the following useful properties of these link cobordism maps.

Next, we make explicit the above formalism, and recall a key computation from \cite{JuhaszMillerZemke}.

%In the first two items of the next remark, we make explicit what is meant by the above formalism. The final item recalls a key computation of the cobordism maps from \cite{JuhaszMillerZemke}.

\begin{remark}\label{rem:usefulprop}
\nopagebreak \
\begin{enumerate}%\thmenumhspace 
\item Given a decorated link cobordism $(W,\S,\mathcal{A})$ from $(M_0,\ell_0)$ to $(M_1,\ell_1)$ there is an induced map 
\[F_{W,\S}\colon \widehat{\HFL}(M_0,\ell_0)\to\widehat{\HFL}(M_1,\ell_1).\]
%,\quad F^-_{W,\S}:\HFL^-(M_0,\ell_0)\to\HFL^-(M_1,\ell_1).\]
We will omit the arcs $\mathcal{A}$ from the notation for cobordism maps.  
This map is well-defined up to smooth ambient isotopy of $\S$ rel.\ boundary. 
Given a surface $\S$ whose boundary is a multi-pointed link $\ell$ in a 3-manifold $M$, we may view $\S$ as having trivial decoration; moreover, $\S$ naturally induces a map 
\[F_{W,\S}\colon \widehat{\HFL}(\emptyset)\to\widehat{\HFL}(M,\ell).\] 
The vector space $\widehat{\HFL}(\emptyset)$ is a copy of $\mathbb{F}_2$, so $F_{W,\S}$ is determined by the image of the single generator.
\item\label{prop1} According to \cite[Theorem~11.3]{juhasz}, link cobordism maps behave well under composition. That is, if a decorated cobordism $(W,\S)$ from $(M_0,\ell_0)$ to $(M_1,\ell_1)$ can be split as the composition of decorated cobordisms $(W_0,\S_0)$ from $(M_0,\ell_0)$ to $(M',\ell')$, followed by $(W_1,\S_1)$ from $(M',\ell')$ to $(M_1,\ell_1)$, then \[F_{W,\S}=F_{W_1,\S_1}\circ F_{W_0,\S_0}.\]
%This property follows from functoriality, but we wish to emphasize it explicitly.
\item\label{prop2} By \cite[Corollary 8.4]{JuhaszMillerZemke}, if $\S\subseteq B^4$ is obtained by pushing the interior of a quasipositive Seifert surface for a knot in $S^3$ into $B^4$ then $F_{B^4,\S}$ is nonvanishing. (In fact, $\Omega(\S)=0$; see Section \ref{sec:omega}.)
\end{enumerate}
\end{remark}

The composition law of Remark~\ref{rem:usefulprop}~\eqref{prop1} provides a means of decomposing a complicated cobordism into simpler ones. Remark~\ref{rem:usefulprop}~\eqref{prop2} is useful because in practice, link cobordism maps are difficult to compute; this gives a large family of examples where we at least know the induced map is nontrivial. For example, the genus one Seifert surface for any iterated positive untwisted Whitehead double of a positive knot $K$ (i.e.\ $\Wh_+(\Wh_+(\cdots(\Wh_+(K))\cdots))$) is quasipositive \cite{rudolphsqp}.

\subsection{The \texorpdfstring{$\boldsymbol{\Omega}$}{Omega}-invariant}\label{sec:omega}
%Blue {I added {Omega} in the texorpdfstring, this was missing.} 
%I realise this won't show up blue but since you guys are obsessed with colour! 
The link cobordism map $F_{W,\S}$ is an invariant of $\S$ only up to isotopy rel.\ boundary. To obstruct two surfaces from being smoothly equivalent, we use the invariant $\Omega$ from \cite[Section 6]{JuhaszMillerZemke}, defined for surfaces embedded in $B^4$.

Given an oriented, properly embedded surface $\S$ in $B^4$ with positive genus and connected boundary, %t least one component of positive genus,
there exists an invariant $\Omega(\S)\in\Z^{\ge 0}\cup\{-\infty\}$ with the property that if $(B^4,\S)$ is diffeomorphic to $(B^4,\S')$, then $\Omega(\S)=\Omega(\S')$. The invariant $\Omega(\S)$ is defined to be $-\infty$ if and only if $F_{B^4,\S}$ vanishes.

In~\cite{JuhaszMillerZemke}, the surface $\S$ is assumed to have connected boundary; to avoid re-writing that material for the disconnected boundary case, we continue to use $\Omega$ in the connected boundary setting. The details of the construction of $\Omega$ are beyond the scope of this paper, so we provide a heuristic description of $\Omega(\S)$ when $\S$ has genus $g$.
\begin{itemize}
    \item We can view $F_{B^4,\S}(1)$ as an element of $\widehat{\HFL}(S^3,\ell)\otimes\mathbb{F}_2[\mathbb{Z}^{2g}]$.
    \item For each element $a$ of $\mathbb{F}_2[\mathbb{Z}^{2g}]$, $\Omega(a)$ denotes the number of irreducible factors of $a$, counted with multiplicity (here we use the fact that $\mathbb{F}_2[\mathbb{Z}^{2g}]$ is a UFD).
    \item  
    Now let
    $\Omega(\S)=\max\{\Omega(a)\mid F_{B^4,\S}(1)=a\cdot y$ for some $y\}$.
\end{itemize}

Note that this is a description and not the definition, which requires deformed knot Floer homology; see Remark \ref{rem:twistedfloer} for a longer discussion and  \cite{JuhaszMillerZemke} for even more details.

%components whose genera sum to $g$. (This is extremely imprecise: in fact, we use a deformed version $F_{B^4,\S;\omega}$ of the cobordism maps involving a choice $\omega$ of closed 2-forms that are an integral basis of $H^2(B^4\setminus \nu S ,S^3\setminus \nu l ;\mathbb{R})$. A diffeomorphism taking $\S$ to a surface $\S'$ need not fix $\omega$, so we cannot directly compare these maps, but it turns out that the number of irreducible factors (defined appropriately) of $F_{B^4,\S;\omega}(1)$ does not depend on the choice of $\omega$. To experts, we remark that while \cite[Definition 6.2]{JuhaszMillerZemke} refer to the boundary of $\S$ as a knot since the surfaces in their setting are connected, the word ``knot" could have simply been replaced with ``link." (with $\S$ still decorated trivially).)

\subsection{Rim surgery}\label{subsection:rim-surgery}

We briefly recall the technique \textit{rim surgery}. This well known method for producing potentially exotic pairs of surfaces in 4-manifolds was invented by Fintushel-Stern \cite{fintushelstern}, and generalizes the twist-spin construction of Zeeman \cite{zeeman}.

\begin{definition}
Let $X$ be a 4-manifold and let $\S \subseteq X$ be a smoothly embedded surface. Let $\alpha \subseteq \Sigma$ be a simple closed curve with $w_1^{\Sigma}(\alpha) =0$, 
%whose neighborhood in $\Sigma$ is an annulus
and $J \subseteq S^3$ be a knot. %Let $\ol\nu\alpha$ be a fixed closed tubular neighborhood of $\alpha$ in $X$ and 
(Here, $w_1^{\Sigma}$ is the first Steifel-Whitney class of $\Sigma$, %which is also sometimes denoted by $w_1(\Sigma)$. As $w_1^{\Sigma}$ is an element of $H^1(\Sigma;\mathbb{Z}/2) \cong \operatorname{Hom}(H_1(\Sigma;\mathbb{Z}/2),\mathbb{Z}/2)$, writing $w_1^{\Sigma}(\alpha) \in \mathbb{Z}/2$ denotes the evaluation of $w_1^\Sigma$ on the curve $\alpha$, considering the latter as a class in $H_1(\Sigma;\mathbb{Z}/2)$. The 
so the condition $w_1^{\Sigma}(\alpha) =0$ requires that $\alpha$ admits a neighbourhood homeomorphic to an annulus.) 
%The rim surgery construction removes an annulus, and replaces it with another, which is why we require that $w_1^\Sigma(\alpha) =0$.)

Choose a framing $\eta$ of the 2-subbundle of the normal bundle of $\alpha$ that is normal to $\Sigma$. %, i.e.\ $\nu\Sigma|_{\alpha}$. %Fix an identification of $\ol\nu\alpha$ with $B^3\times S^1$.
Now we can identify $\nu\alpha$ with $B^3\times S^1$; 
%the framing $\eta$ identifies the subbundle with $\alpha \times D^2$, and  
choose this identification so that $\alpha=\{0\} \times S^1$, $\S\cap\nu\alpha=I\times S^1$ for a fixed vertical arc $I\subseteq B^3$, and $\eta$ restricts to the same pair of vectors in $T_0 B^3$ for each $\{0\} \times \theta$. We construct the \emph{rim surgered surface} $\S(\alpha;J) \subseteq X$ as follows: the surface agrees with $\S$ outside of $\ol\nu\alpha$. Note that $(\ol\nu\alpha,\S\cap \ol\nu\alpha)=(B^3\times S^1,I\times S^1)$, where $I$ is an unknotted arc in $B^3$. To obtain $\S(\alpha;J)$, replace each $(B^3, I)$ with $(B^3, \mathring{J})$, i.e.\ the tangle obtained from $(S^3,J)$ by deleting a small ball from $S^3$ centered at a point on $J$.
\end{definition}

As written, $\S(\alpha;J)$ depends on our choice of framing $\eta$. There are an integers' worth of such framings that induce a fixed orientation on the subbundle.  Choosing a different $\eta$ twists $B^3\times\theta$ an integer number of times about a fixed axis with boundary $\partial\mathring{J}$, as $\theta$ goes from $0$ to $2\pi$ (see Figure~\ref{fig:twisting}, second row vs.\ third row).

One setting in which it is easy to specify a framing $\eta$ is in the case that $\alpha$ bounds a framed locally flat disk $\Delta$ in the complement of $\Sigma$. Here, the disk $\Delta$ is \emph{framed} if a section of the normal bundle of $\alpha$ that lies in $T\Sigma$ extends to a nonvanishing section over all of $\Delta$. We can then specify that for each $x\in\alpha$, the first coordinate of $\eta(x)$ points into $\Delta$. This ensures that $\Delta$ intersects $(\nu\alpha)$ in an annulus of the form (arc)$\times S^1$. 
Here, we say that replacing $\S\cap \ol\nu\alpha$ with $\mathring{J}\times S^1$ yields $\S(\alpha;J,0)$. We call this {\emph{0-twisted rim surgery}} (relative to $\Delta$). If we twist the copy of $\mathring{J}$ a total of $n$ times, as above, we instead obtain a surface that we call $\S(\alpha;J,n)$. We call this {\emph{$n$-twisted rim surgery}}.
When $\Delta$ has been implicitly specified, we write $\S(\alpha;J,n)$ as a well-defined surface without reference to $\Delta$. In Figure~\ref{fig:twisting}, we illustrate a neighborhood of $\alpha$ intersecting $\S$, $\S(\alpha;T,0)$, and $\S(\alpha;T,1)$ for $T$ the right-handed trefoil.

\begin{figure}
\labellist
  \pinlabel \footnotesize{$B^3\times0$} at 27 200
  \pinlabel \footnotesize{$B^3\times\pi/2$} at 99 200
  \pinlabel \footnotesize{$B^3\times\pi$} at 170 200
  \pinlabel \footnotesize{$B^3\times3\pi/2$} at 240 200
  \pinlabel \footnotesize{$B^3\times2\pi$} at 310 200
  \pinlabel \footnotesize{$B^3\times0$} at 27 95
  \pinlabel \footnotesize{$B^3\times\pi/2$} at 99 95
  \pinlabel \footnotesize{$B^3\times\pi$} at 170 95
  \pinlabel \footnotesize{$B^3\times3\pi/2$} at 240 95
  \pinlabel \footnotesize{$B^3\times2\pi$} at 310 95
  \pinlabel \footnotesize{$B^3\times0$} at 27 -12
  \pinlabel \footnotesize{$B^3\times\pi/2$} at 99 -12
  \pinlabel \footnotesize{$B^3\times\pi$} at 170 -12
  \pinlabel \footnotesize{$B^3\times3\pi/2$} at 240 -12
  \pinlabel \footnotesize{$B^3\times2\pi$} at 310 -12
\pinlabel \textcolor{blue}{$\Delta$} at 115 250
\pinlabel \textcolor{red}{$\alpha$} at 20 242
\pinlabel $\S\cap \ol\nu\alpha$ at -25 240
\pinlabel $\S(\alpha;T,0)\cap \ol\nu\alpha$ at -45 133
\pinlabel $\S(\alpha;T,1)\cap \ol\nu\alpha$ at -45 28
\endlabellist
\hspace{.5in}\includegraphics[width=100mm]{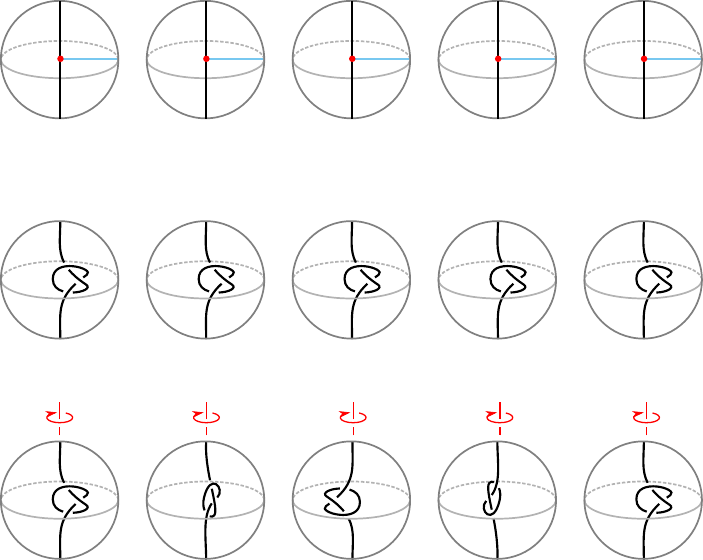}
\vspace{.2in}
\caption{  {Top row:} we draw a neighborhood $\ol\nu\alpha$ of $\alpha$, a simple closed curve in $\S$ with annular neighborhood.  In blue, we draw the portion of a framed disk $\Delta$ bounded by $\alpha$ that intersects $\ol\nu\alpha$. We draw $\ol\nu\alpha$ as $B^3\times S^1$, with $\alpha$ as $0\times S^1$. The surface $\S$ intersects $\ol\nu\alpha$ in (vertical arc)$\times S^1$ as illustrated. The parameterization of $\ol\nu\alpha$ is chosen so that $\Delta$ intersects it in (arc from $0$ to $\partial B^3$)$\times S^1$, as illustrated.   {Middle row:} replacing $\S\cap \ol\nu\alpha=I\times S^1$ with $\mathring{T}\times S^1$ yields $\S(\alpha;T,0)$.   {Bottom row:} replacing the $I\times \theta$s of $\S\cap \ol\nu\alpha$ with copies of $\mathring{T}$ that rotate once about a vertical axis as $\theta$ runs from $0$ to $2\pi$ yields $\Sigma(\alpha;T,1)$.}\label{fig:twisting}
\end{figure}

\begin{remark}\label{rem:zeeman}
Let $\Sigma$ be an unknotted sphere in $S^4$ obtained by doubling $(B^4,D)$, where $D$ is a boundary-parallel disk in $B^4$. Let $\alpha=\partial D$, so $\alpha$ is a curve in $\Sigma$ that bounds a framed disk (a parallel copy of $D$) into the complement of $\S$. Then $\S(\alpha;J,n)$ is the $n$-twist spin of the knot $J$ as constructed by Zeeman \cite{zeeman}.  Zeeman also showed that the 1-twist spin of any knot $J$ is an unknotted $S^2 \subseteq S^4$. 
\end{remark}

Next, we 
%The next lemma, proven in \cite{JuhaszMillerZemke}, 
give a criterion for rim surgery to preserve the isotopy class.% have no essential effect.

\begin{lemma}$($\cite[Corollary 2.7]{JuhaszMillerZemke}$)$\label{lemma:when-rim-surgery-doesnt-change-anything}
  If $\alpha$ bounds a $\CAT$-embedded framed disk $D$ whose interior lies in $B^4 \sm \S$, then $\Sigma(\alpha;J,1)$ and $\S$ are $\CAT$ ambiently isotopic rel.\ boundary. 
  %\SK{This is the right citation from JMZ, yes? It wasn't originally provided, but I thought the citations for Lemma 4.6 and Lemma 4.7 should be parallel.}
\end{lemma}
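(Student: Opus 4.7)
The plan is to reduce the claim to Remark~\ref{rem:zeeman} via a locality argument, so that the result follows from Zeeman's theorem that the $1$-twist spin of any knot is the unknotted $2$-sphere in $S^4$. The key point is that the hypothesis of a framed disk $D$ gives us, in a suitable neighborhood of $\alpha \cup D$, exactly the local setup appearing in the $1$-twist spin construction.

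First, I set up the local model. Since $D$ is a $\CAT$-embedded framed disk in $B^4 \sm \mathring\Sigma$ with $\partial D = \alpha \subseteq \Sigma$, a $\CAT$-collar of $\alpha$ in $\Sigma$ can be taken to meet $D$ transversally along $\alpha$. Take a regular neighborhood $U$ of $\alpha \cup D$ in $B^4$. Since $\alpha \cup D$ is a disk, $U \cong B^4$. Shrinking the collar if necessary, arrange that $U$ contains the support $\nu\alpha$ of the rim surgery and that $\Sigma \cap U$ is a thin annular neighborhood $\alpha \times [0,\epsilon]$ of $\alpha$ in $\Sigma$. After smoothing corners in the $\DIFF$ case, $D_0 := (\Sigma \cap U) \cup_\alpha D$ is a $\CAT$ properly embedded disk in $U$.

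Next, I identify the local picture with a standard model. Using the framing of $D$ and the framing of $\alpha$ in $\Sigma$, fix a parametrization $U \cong D^2 \times D^2$ so that $D_0 = D^2 \times \{0\}$, $D$ is a concentric subdisk $\{|x| \leq 1/2\} \times \{0\}$, and $\alpha = \{|x| = 1/2\} \times \{0\}$, while the framing $\eta$ used to define the rim surgery matches the product framing. Under this identification, the $1$-twisted rim surgery replaces the annulus $D_0 \sm \mathring D$ by the corresponding annular portion of the $1$-twist spin of $J$; in particular, $\Sigma(\alpha;J,1) \cap U = D_0'$, where $D_0' := (\Sigma(\alpha;J,1) \cap U) \cup_\alpha D$ (after corner smoothing) is the disk in $U$ whose double in $S^4 := U \cup_\partial (-U)$ is precisely Zeeman's $1$-twist spin sphere of $J$. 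By construction $\partial D_0' = \partial D_0 = \alpha'$ on $\partial U$, and the surgery is supported well away from $\partial U$.

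Finally, I invoke Zeeman's theorem. In the double $S^4$, the sphere $D_0 \cup (-D_0)$ is unknotted, and so is $D_0' \cup (-D_0')$ by \cite{zeeman}. The standard unknottedness argument produces an ambient isotopy of $S^4$ that takes $D_0'$ to $D_0$ and can be arranged (by a standard relative argument, using that Zeeman's isotopy is supported in a compact region that we may place in $\mathrm{int}(U)$) to be the identity on $\partial U \subseteq S^4$. Extending by the identity on $B^4 \sm U$ yields the desired $\CAT$-ambient isotopy of $B^4$, rel.\ boundary, sending $\Sigma(\alpha;J,1)$ to $\Sigma$. The $\TOP$ case is handled identically, replacing smooth normal framings by locally flat ones. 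The main obstacle is bookkeeping: verifying that the framing $\eta$ specified by $D$ realizes the $+1$-twist in Zeeman's construction rather than some other $n$-twist (for which the spun sphere need not be unknotted), and upgrading the isotopy of the double to one that fixes $\partial U$ pointwise.
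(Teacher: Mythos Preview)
Your strategy—localizing near $\alpha\cup D$ and reducing to Zeeman's theorem on $1$-twist spins—is the same idea the paper uses, but your setup contains an inconsistency that breaks the argument as written.

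You ask that $U$ contain the rim-surgery support $\nu\alpha$. By the definition of rim surgery, $\nu\alpha\cong B^3\times S^1$ with $\alpha=\{0\}\times S^1$ and $\Sigma\cap\nu\alpha=I\times S^1$ for an arc $I$ with $0$ in its \emph{interior}; hence $\Sigma\cap\nu\alpha$ is a two-sided annular neighborhood of $\alpha$ in $\Sigma$. But you also require $\Sigma\cap U=\alpha\times[0,\epsilon]$, a one-sided collar with $\alpha$ on its boundary. These two conditions cannot both hold. If instead you take the honest two-sided annulus for $\Sigma\cap U$, then $D_0=(\Sigma\cap U)\cup_\alpha D$ is no longer a $2$-manifold: three sheets meet along $\alpha$, so your doubling-to-$S^4$ picture collapses. (There is also a slip in the displayed definition of $D_0'$: as written it asserts $D\subseteq\Sigma(\alpha;J,1)\cap U$, which is false since $\mathring D$ is disjoint from $\Sigma$.)

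The paper's proof resolves this by compressing $\Sigma$ along \emph{two} parallel copies $D',D''$ of the framed disk $D$, one on each side of $\alpha$. This produces a genuine closed $2$-sphere component $U$ containing $\alpha$, unknotted in the complement of the rest of the surface; Zeeman's theorem then applies directly to $U(\alpha;J,1)$ as a statement about spheres, with no rel-boundary upgrade required. The compressions are then undone by surgering along two framed arcs, which one checks can be carried through the isotopy using that the neighborhood of the sphere is simply connected. This is precisely the step you flag as ``upgrading the isotopy of the double to one that fixes $\partial U$ pointwise,'' but arranged so that only Zeeman's original closed-surface statement is needed.
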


%In Lemma~\ref{lemma:when-rim-surgery-doesnt-change-anything}, when we say that $D$ is {\emph{framed}}, we mean that the 1-dimensional subbundle of the normal bundle of $\partial D$ that is parallel to $\S$ extends to a 1-dimensional subbundle of the normal bundle of all of $D$. This condition ensures that there exists a thickening $D\times I$ of $D$ meeting $\S$ in $(\partial D)\times I$.

\begin{proof}
%[Proof of Lemma~\ref{lemma:when-rim-surgery-doesnt-change-anything}]
This proof is based on Zeeman's \cite{zeeman} work on twist-spun knots. Since $D$ is framed, there is a thickening of $D$ to $D\times I$ meeting $\S$ in $(\partial D)\times I$ (i.e.\ we can take parallel copies of $D$ that are disjoint from $D$ and have boundary on $\S$). %$\S$ can be compressed along $D$.
Let $D'$ and $D''$ be two parallel copies of $D$, pushed to have boundary curves on opposite sides of $\alpha$ in $\S$, and compress $\S$ along both $D'$ and $D''$ to obtain a surface $\S'$. Thus, $\S'$ has a 2-sphere component $U$ bounding $D\times I$, and $U$ is $\CAT$-unknotted in the complement of the rest of $\S'$.

The surface $\Sigma$ can be obtained from $\Sigma'$ by surgery along two framed arcs $\eta'$ and $\eta''$ (undoing the compressions along $D'$ and $D''$, respectively), as in Figure~\ref{fig:rimnothing}.
Additionally, the curve $\alpha$ exists in both $\Sigma$ and $\Sigma'$, so rim surgery can be performed on both surfaces along $\alpha$. By the same logic, we see that $\Sigma(\alpha;J,1)$ can be obtained from $\Sigma'(\alpha;J,1)$ by surgery along $\eta'$ and $\eta''$. Note that one component of $\Sigma'(\alpha;J,1)$ is the rim-surgered 2-sphere $U(\alpha;J,1)$. As in Remark~\ref{rem:zeeman},  Zeeman~\cite{zeeman} showed that the 1-twist spin $U(\alpha;J,1)$ is  $\CAT$-unknotted for any $J$, so $\Sigma'(\alpha;J,1)$ is $\CAT$-isotopic to $\Sigma'$. Since $\eta'$ and $\eta''$ are arcs each meeting $\nu U$ in only one interval, we may arrange for this isotopy to take $\eta'$ and $\eta''$ to $\eta'$ and $\eta''$, respectively; this uses the fact that $\nu U \cong S^2 \times D^2$ is simply connected, and that every pair of embedded arcs in a 4-manifold which are homotopic to one another rel.\ endpoints are isotopic to one another rel.\ endpoints. Therefore, $\Sigma(\alpha;J,1)$ is $\CAT$-isotopic to the result of surgering $\S'$ along $\eta'$ and $\eta''$, which we already noted is isotopic to $\Sigma$. Thus  $\Sigma(\alpha;J,1)$ is $\CAT$-isotopic to $\S$ as desired.
\end{proof}

%MP: some small edits here, I still felt it wasn't as clear as it could be. 

\begin{figure}
\labellist
  \pinlabel $\textcolor{blue}{D}$ at 35 84
  \pinlabel $\textcolor{blue}{D'}$ at 37.5 48
  \pinlabel $\textcolor{blue}{D''}$ at 38 121
\pinlabel $\textcolor{red}{\alpha}$ at -10 83
\pinlabel $U$ at 190 70
\pinlabel $\eta'$ at 153 36
\pinlabel $\eta''$ at 154 132
\endlabellist
\includegraphics[width=50mm]{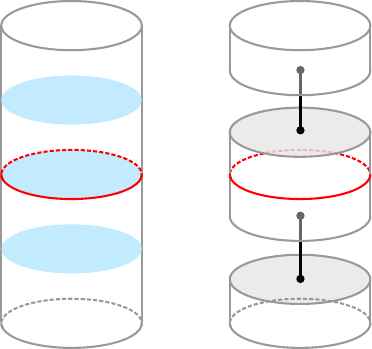}
\caption{  {Left:} a neighborhood of a framed disk $D$ with boundary $\alpha$ on $\S$. Since $D$ is framed, we may find two parallel copies $D'$ and $D''$ of $D$ on opposite sides of $D$ (in $\Sigma$) with boundary on $\Sigma$.   {Right:} compressing $\S$ along $D'$ and $D''$ yields a disconnected surface $\S'$ with an unknotted 2-sphere component $U$. Surgering $\S'$ along the arcs $\eta',\eta''$ (with framing chosen to yield an surface orientable in the pictured neighborhood of $D$) yields $\S$ again.}\label{fig:rimnothing}
\end{figure}

On the other hand, rim surgery does often change the isotopy type of a surface when working outside the setting of Lemma~\ref{lemma:when-rim-surgery-doesnt-change-anything}. One potential obstruction to smooth isotopy comes from the link cobordism maps of Section~\ref{sec:knotfloer}: if surfaces $\S_1$ and $\S_2$ in $X$ are isotopic rel.\ boundary, then $F_{X,\S_1}=F_{X,\S_2}$ (where we choose $w,z$ basepoints of $\partial \S_1=\partial \S_2$ to agree).

\begin{lemma}[{\cite[Theorem 1.1]{juhaszzemkeconcordancesurgery} (see also \cite[Theorem 5.1]{JuhaszMillerZemke})}]\label{lem:effectofrim}
Let $\S$ be a smooth surface properly embedded in $B^4$. %, and let $\boldsymbol{\omega}$ be a basis of closed 2-forms in $H^2(B^4\setminus\nu\S,S^3\setminus\nu(\partial S);\mathbb{R})$. We write 
Let $\alpha$ be an essential, non-separating simple closed curve on $\Sigma$ and $J$ a knot. Then 
\[F_{B^4,\S(\alpha;J)}(1)=\Delta_J(y_\alpha)F_{B^4,\S}(1)\ \in \widehat{\HFL}(S^3,\partial \Sigma).\]
In particular, if $F_{B^4,\S}$ is nonvanishing, and $J$ has nontrivial Alexander polynomial, then $\S$ and $\S(\alpha;J)$ are not smoothly isotopic rel.\ boundary. More generally, if $F_{B^4,\S}$ is nonvanishing and if $J_1$ and $J_2$ are knots with Alexander polynomials that are distinct up to multiplication with a monomial $\pm t^k$, then the surfaces $\S(\alpha; J_1)$ and $\S(\alpha;J_2)$ are not smoothly isotopic rel.\ boundary.

For $\partial \S$ connected and a non-negative integer $k$ 
%\sout{counting} \sout{ $k$ equal to}} 
equal to 
%\footnote{MM: no possibly ambiguity in saying ``equal to," right? Why ``counting"?}
the number of irreducible factors of the mod 2 Alexander polynomial $\Delta_J(t) \in \mathbb{F}_2[t^{\pm 1}]$ of $J$  $($counted with multiplicity$)$, we conclude $\Omega(\S(\alpha;J))=\Omega(\S)+k$. %If $F_{B^4,\S}$is  nonvanishing (i.e.\ $\Omega(\S)\neq-\infty$), we find that $(B^4,\S)\not\cong(B^4,\S(\alpha;J))$.
\end{lemma}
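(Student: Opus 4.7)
The plan is to derive the formula as a direct application of the concordance-surgery formula of Juh\'asz--Zemke \cite{juhaszzemkeconcordancesurgery}, after which the remaining statements follow from standard facts about the polynomial ring $\mathbb{F}_2[\mathbb{Z}^{2g}]$.

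First I would identify rim surgery as a special case of concordance surgery. A tubular neighborhood $\nu\alpha\cong B^3\times S^1$ meets $\Sigma$ in the trivial annulus $I\times S^1$, and $\Sigma(\alpha;J)$ is obtained by replacing this with $\mathring{J}\times S^1$, which is exactly the local replacement that concordance surgery performs. The concordance-surgery formula then reads
\[
F_{B^4,\Sigma(\alpha;J)}(1) \;=\; \Delta_J(z)\, F_{B^4,\Sigma}(1),
\]
where $z$ is the variable in the deformed link Floer module dual to the homology class $[\alpha]\in H_1(\Sigma;\Z)$. The non-separating hypothesis on $\alpha$ is essential: it guarantees $[\alpha]\neq 0$, so $z$ remains an honest indeterminate rather than being specialized to $1$, and in particular $\Delta_J(z)$ retains its $k$ irreducible factors.

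For the two distinguishing statements, I would use that $\mathbb{F}_2[\mathbb{Z}^{2g}]$ is an integral domain whose units are the monomials. If $F_{B^4,\Sigma}(1)\neq 0$ and $\Delta_J\neq 1$, then $\Delta_J(z)$ is a non-unit and $\Delta_J(z)F_{B^4,\Sigma}(1)\neq F_{B^4,\Sigma}(1)$; similarly, if $\Delta_{J_1}$ and $\Delta_{J_2}$ differ by more than a monomial then $\Delta_{J_1}(z)F_{B^4,\Sigma}(1)$ and $\Delta_{J_2}(z)F_{B^4,\Sigma}(1)$ are distinct. Since the cobordism map is invariant under smooth ambient isotopy rel.\ boundary (see Remark~\ref{rem:usefulprop}), this obstructs smooth isotopy.

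Finally, for the $\Omega$ statement the key algebraic input is that $\mathbb{F}_2[\mathbb{Z}^{2g}]$ is a UFD, so irreducible factor counts add under multiplication. Choosing a factorization $F_{B^4,\Sigma}(1)=a\cdot y$ realizing $\Omega(a)=\Omega(\Sigma)$ yields $F_{B^4,\Sigma(\alpha;J)}(1) = (\Delta_J(z)\, a)\cdot y$ with $\Omega(\Delta_J(z)\, a)=\Omega(\Sigma)+k$, hence $\Omega(\Sigma(\alpha;J))\ge \Omega(\Sigma)+k$. The reverse inequality follows by unique factorization: any competing factorization of $\Delta_J(z)F_{B^4,\Sigma}(1)$ can be rearranged, by stripping off the $k$ irreducible factors of $\Delta_J(z)$, into a factorization of $F_{B^4,\Sigma}(1)$ bounded above by $\Omega(\Sigma)$. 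The hard part of the argument is the concordance-surgery formula itself, which is the substantive input supplied by \cite{juhaszzemkeconcordancesurgery}; everything else is algebraic bookkeeping in the UFD $\mathbb{F}_2[\mathbb{Z}^{2g}]$, together with a routine check that the (untwisted) framing implicit in the rim-surgery construction matches the framing used in the concordance-surgery setup.
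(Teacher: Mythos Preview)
The paper does not supply its own proof of this lemma; it is stated with the attribution \cite{juhaszzemkeconcordancesurgery} in the header and used as a black box. Your proposal correctly identifies the concordance-surgery formula as the substantive input and derives the remaining consequences by elementary algebra, which is exactly the intended logical structure.

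One small point worth tightening in your $\Omega$ argument: the reverse inequality relies not just on $\mathbb{F}_2[\Z^{2g}]$ being a UFD but on the target module $\widehat{\HFL}(S^3,\ell)\otimes\mathbb{F}_2[\Z^{2g}]$ being \emph{free} over that ring (which it is, being a tensor product of an $\mathbb{F}_2$-vector space with the ring). In a free module over a UFD, the maximal scalar divisor of an element is the gcd of its coordinates, and multiplying by $\Delta_J(z)$ multiplies that gcd by $\Delta_J(z)$, adding exactly $k$ irreducible factors. Without freeness (or at least torsion-freeness), ``stripping off'' the factors of $\Delta_J(z)$ from an arbitrary factorization $a'\cdot y'$ is not automatic. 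You should also bear in mind the paper's caveat that the description of $\Omega$ given here is only heuristic; the actual argument in \cite{JuhaszMillerZemke} works with the deformed theory, though the algebraic shape is the same.
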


\begin{remark}\label{rem:twistedfloer}
In order to parse Lemma \ref{lem:effectofrim}, we will discuss the totally twisted link Floer cobordism maps of \cite{juhaszzemkeconcordancesurgery} and \cite{JuhaszMillerZemke}. % (in particular, this discussion is an extremely abbreviated version of Section 4 of \cite{JuhaszMillerZemke}).

Given an $n$-tuple $\boldsymbol{\omega}=(\omega_1,\ldots,\omega_n)$ of closed 2-forms on a manifold $X$, we obtain an action of $C_2^{\operatorname{sm}}(X;\mathbb{Z})$ (the group %\footnote{Do we really need it as a space, or just as a group?  I added the sm, because I think $C_2$ canonically refers to the singular chains. Returning later, I think we should say group here, not space. There doesn't seem to be  a topology invoked on the group of smooth chains. I changed it.}
of smooth 2-chains on $X$) on $\mathbb{R}^n$ defined by \[h\cdot (a_1,\ldots,a_n)=(a_1+\langle \omega_1,h\rangle,\ldots,a_n+\langle \omega_n,h\rangle).\] Note that for $h$ in the image of $\partial_3 \colon C_3^{\operatorname{sm}}(X;\mathbb{Z})\to C_2^{\operatorname{sm}}(X;\mathbb{Z})$, this is the trivial action, since \[\langle \omega_i,h\rangle = \langle \omega_i,\partial_3 g\rangle = \langle d\omega_i,g\rangle = \langle 0,g\rangle = 0\] if $h=\partial_3 g$ is a boundary. %\footnote{MP: it said if $h$ is closed, but I think closed is for forms, for `is a boundary' for chains I would have said either just that, or `exact'.}  
Thus, there is in fact an action of $\coker(\partial_3)$ %\footnote{$C_2/C_3$ is just the cokernel, so I propose just calling it that, the other notation was pretty clunky.  Mmm it does show up a lot. Anyway, for a discussion, my initial reaction was, why invent strange notation for a cokernel?   Well, it's not like the next page is going to suddenly look very pretty whatever we do, it's a technical part, so I think the default should be to use the standard notation, but I'm open to arguments. MM: Yeah good, I think this is better. } %$(C_2/C_3)X:=C_2(X;\mathbb{Z})/(\text{Im}(\partial:C_3(X;\mathbb{Z})\to C_2(X;\mathbb{Z})))$
on $\mathbb{R}^n$ induced by $\boldsymbol{\omega}$.

We want to extend the action of $C_2^{\operatorname{sm}}(X;\mathbb{Z})$ on $\R^n$ to an action of  $\mathbb{F}_2[C_2^{\operatorname{sm}}(X;\mathbb{Z})]$ on $\F_2[\mathbb{R}^n]$, and then consider the induced action of $\mathbb{F}_2[\coker(\partial_3)]$ on $\F_2[\mathbb{R}^n]$.
Let $\{y_1,\dots,y_n\}$ be the standard basis of $\R^n$, and write the group structure of $\R^n$  multiplicatively.  Thus $y_1^{a_1}\cdots y_n^{a_n}$ denotes the element of $\mathbb{F}_2[\mathbb{R}^n]$ corresponding to $(a_1,\ldots, a_n) \in \R^n$. Similarly, write $e^h$ to indicate the element of $\mathbb{F}_2[\coker(\partial_3)]$ corresponding to $h \in \coker (\partial_3)$. Then we can extend the action above to the promised action on $\F_2[\mathbb{R}^n]$ via  
\[e^h\cdot y_1^{a_1}\cdots y_n^{a_n}=y_1^{a_1+\langle \omega_1,h\rangle}\cdots y_n^{a_n+\langle \omega_n,h\rangle}.\] We write $\mathbb{F}_2[\mathbb{R}^n]_{\boldsymbol{\omega}}$ to indicate $\mathbb{F}_2[\mathbb{R}^n]$ considered as an $\mathbb{F}_2[\coker(\partial_3)]$-module equipped with the action determined by $\boldsymbol{\omega}$.

%Given a k-component link $L$ in a 3-manifold $M$, there is also a usual action of $H_1(M\setminus\nu L;\mathbb{Z})$ on the link Floer chain complex $\CFL(M,L)$ (see \cite{yinihomological}) given by \begin{equation}\label{floeraction}h\cdot \mathbf{x}=\sum_{\mathbf{y}\in\mathbb{T}_\alpha\cap\mathbb{T}_\beta}\sum_{\substack{\phi\in\pi_2(x,y)\\\mu(\phi)=1}}c(\gamma,\phi)\cdot \mathbf{y}\cdot u_1^{n_{z_1}(\phi)}\cdots u_k^{n_{z_k}(\phi)}v_1^{n_{w_1}(\phi)}\cdots v_k^{n_{w_k}(\phi)},\end{equation} where $(F,\alpha,\beta,(z_1,\ldots,z_k),(w_1,\ldots, w_k))$ is an admissible Heegaard diagram for $L$, $\gamma\in F$ is a curve missing all $w,z,\alpha,\beta$ and that represents $h$. The number $c(\gamma,\phi)$ is given by \[c(\gamma,\phi)=\#\{p\in\mathcal{M}(\phi)\mid p(1\times 0)\in(\gamma\times\sym^{g-1}(F))\cap\mathbb{T}_\alpha\}.\] 
%In words, this action is similar to the link Floer chain complex differential, except that we only count holomorphic disks that intersect $\mathbb{T}_\alpha$ (mod 2).

Juh\'asz-Zemke \cite{juhaszzemkeconcordancesurgery} give a way of twisting the sutured Floer homology of a sutured manifold $(M,\gamma)$ (e.g.\ the complement of $\partial \S$ in $S^3$) via an $n$-tuple $\boldsymbol{\omega}$ of closed 2-forms on $M$. This is relevant to our setting because the sutured Floer homology of 0-surgery on a link in $S^3$ is tautologically equal to the hat version of link Floer homology of that link, so this might be viewed as another perspective on link Floer homology.

Let $(M,\gamma)$ be a balanced sutured 3-manifold and let $(S,\alpha,\beta)$ be an admissible sutured Heegaard decomposition for $(M,\gamma)$ in the sense of \cite{juhasz06}. 
The twisted sutured Floer chain complex 
%defining the twisted sutured Floer chain complex 
${\underline{CF}}(S,\alpha,\beta,\mathfrak{s};\mathbb{F}_2[\coker(\partial_3)])$ is generated by $\mathbf{x}\otimes e^h$ across all $h\in \coker(\partial_3)$ and $x\in \mathbb{T}_\alpha\cap \mathbb{T}_\beta$ with $s_z(\mathbf{x})=\mathfrak{s}$. %Here, $(F;\alpha,\beta)$ is an admissible (in the sense of \cite{juhasz06}) sutured Heegaard diagram for $M$,
Here, $\mathfrak{s}$ is a relative spin$^{c}$ structure on $M$, and $\mathbb{T}_\alpha$ and $\mathbb{T}_\beta$ are the totally real tori in the complex manifold $\operatorname{Sym}^{\operatorname{genus}(S)} (S)$ (of real dimension $2 \cdot \operatorname{genus}(S)$), which is the $\operatorname{genus}(S)$-fold symmetric product of $S$ with itself used in the construction of (every flavor of) Heegaard-Floer homology.

%\item $\mathbb{F}_2[\mathbb{R}^n]_{\boldsymbol{\omega}}$ denotes $\mathbb{F}_2[\mathbb{R}^n]$ with the action of $C_2(M;\mathbb{Z})$ induced by $\boldsymbol{\omega}$ as in Equation \eqref{c2action},
%\item the tensor product is taken over $\coker(\partial_3)$, which by Equation \ref{c2action} acts on $\mathbb{F}_2[\mathbb{R}^n]_{\omega}$ and also acts on
%\end{itemize}

%There is also a map of $H_2(M;\mathbb{Z})$ on the link Floer chain complex (see \cite{yinihomological}) given by \[h\cdot \mathbf{x}=\sum_{\mathbf{y}}\sum_{\substack{\phi\in\pi_2(x,y)\\\mu(\phi)=1}}c(\gamma,\phi)\cdot \mathbf{y}\cdot u^{n_z(\phi)}v^{n_w(\phi)},\] where $\gamma\in F$ is a curve missing basepoints and intersections of $\alpha,\beta$ that represents the element $\PD(h)\in H^1(M;\mathbb{Z})/\text{Torsion}$ and \[c(\gamma,\phi)=\#\{p\in\mathcal{M}(\phi)\mid p(1\times 0)\in(\gamma\times\sym^{g-1}(F))\cap\mathbb{T}_\alpha\}.\] 
The differential on the twisted sutured Floer complex is given by \[\partial(x\otimes e^h)=\sum_{\mathbf{y}\in\mathbb{T}_\alpha\cap\mathbb{T}_\beta}\sum_{\substack{\phi\in\pi_2(x,y)\\\mu(\phi)=1}}\#(\mathcal{M}(\phi)/\mathbb{R})\cdot\mathbf{y}\otimes e^{\widetilde{D}(\phi)}\cdot e^h,\] where $\widetilde{D}(\phi)\in \coker(\partial_3)$ is obtained from the domain of $\phi$ by smoothly capping off with compressing disks for $\alpha,\beta$ \cite{juhaszzemkeconcordancesurgery}. 
Let \[ %\underline{CF}(M,\gamma;\mathbb{F}_2[\coker(\partial_3))]) := 
%thought about adding a definition there but actually I don't think that's what we want, I think we want HF when the switch to $M$ occurs. 
\underline{CF}(S,\alpha,\beta;\mathbb{F}_2[\coker(\partial_3)]):=\bigoplus_{\mathfrak{s}\in\spin^c(M)} \underline{CF}(S,\alpha,\beta,\mathfrak{s};\mathbb{F}_2[\coker(\partial_3)]).\] 

Now we set \[CF(S,\alpha,\beta,\mathfrak{s};\mathbb{F}_2[\mathbb{R}^n]_{\boldsymbol{\omega}}):=\underline{CF}(S,\alpha,\beta,\mathfrak{s};\mathbb{F}_2[\coker(\partial_3)])\otimes_{\mathbb{F}_2[\coker(\partial_3)]} \mathbb{F}_2[\mathbb{R}^n]_{\boldsymbol{\omega}}.\] 
This is the \emph{sutured Floer chain complex perturbed by $\boldsymbol{\omega}$}.
Analogously to above, we also define:
\[CF(S,\alpha,\beta;\mathbb{F}_2[\mathbb{R}^n]_{\boldsymbol{\omega}}):=
\bigoplus_{\mathfrak{s}\in\spin^c(M)} CF(S,\alpha,\beta,\mathfrak{s};\mathbb{F}_2[\mathbb{R}^n]_{\boldsymbol{\omega}}).\] 

Choosing a different admissible Heegaard diagram for $(M,\gamma)$ preserves the chain complex  $\underline{CF}(S,\alpha,\beta;\mathbb{F}_2[\coker(\partial_3)])$ up to chain homotopy equivalence and the action of $\mathbb{F}_2[\coker(\partial_3)]$. (Baldwin and Sivek \cite{baldwinsivekcontact} call this a {\emph{projective transitive system.}}) See Section 6 of \cite{juhaszzemkeconcordancesurgery} to understand transition maps. We denote the homology of this chain complex by  \[\underline{SHF}(M,\gamma;\mathbb{F}_2[\coker(\partial_3)]),\] and  we write \[SHF(M,\gamma;\mathbb{F}_2[\mathbb{R}^n]_{\boldsymbol{\omega}})\] for the homology of $CF(S,\alpha,\beta;\mathbb{F}_2[\mathbb{R}^n]_{\boldsymbol{\omega}})$.

Back in the 4-dimensional world, given a suitably decorated sutured cobordism $W$ from $(M,\gamma)$ to $(M',\gamma')$, Juh\'asz and Zemke \cite{juhaszzemkeconcordancesurgery} construct a twisted map
\[\underline{F}_W \colon \underline{SHF}(M,\gamma;\mathbb{F}_2[\coker(\partial_3))])\to \underline{SHF}(M',\gamma';\mathbb{F}_2[\coker(\partial'_3)]).\]  
This procedure involves decomposing $W$ into elementary pieces and defining a map associated to each piece; we refer the interested reader to Section 7 of \cite{juhaszzemkeconcordancesurgery}. Finally, we define \[F_{W;\boldsymbol{\omega}} \colon SHF(M,\gamma;\mathbb{F}_2[\mathbb{R}^n]_{\boldsymbol{\omega}})\to SHF(M',\gamma';\mathbb{F}_2[\mathbb{R}^n]_{\boldsymbol{\omega}}),\]%\footnote{MP: I think there are some $\omega$s missing here. I added them. MM: Okay cool leaving this footnote visible so I remember to look again} 
%where we have again summed over relative spin$^c$ structures, 
to be given by \[F_{W;\boldsymbol{\omega}}=\underline{F}_W\otimes 1_{\mathbb{F}_2[\mathbb{R}^n]_{\boldsymbol{\omega}}}.\] This means that if $\underline{F}_W(1)$ has a term of the form $[x]\cdot e^h$, then $F_{W;\omega}(1)$ has a corresponding term $[x]\cdot y_1^{\langle \omega_1,h\rangle}\cdots y_n^{\langle \omega_n,h\rangle}$. Thus, $F_{W;\boldsymbol{\omega}}(1)$ is a polynomial in $y_1,\ldots, y_n$, albeit with real exponents.

In the setting of our paper, let $\boldsymbol{\omega}$ be an $n$-tuple 
%\footnote{MP: here $n$ $=$ twice the genus of $\S$, right?} 
(with $n$ the sum of the genera of the connected components of $\S$) of closed 2-forms on $X:=B^4\setminus\nu\S$, so $F_{B^4,\Sigma;\boldsymbol{\omega}}=F_{X;\boldsymbol{\omega}}$. Then $F_{B^4,\Sigma;\boldsymbol{\omega}}(1)$ is a polynomial in $y_1,\ldots, y_n$ (although again note that we mean with real-valued exponents and coefficients in $\widehat{\HFL}(S^3,\partial \S)$). Lemma \ref{lem:effectofrim} says that \[F_{B^4,\S(\alpha;J);\boldsymbol{\omega}}(1)=\Delta_J(y_1^{\langle\omega_1,[T_\alpha]\rangle}\cdots y_n^{\langle\omega_n,[T_\alpha]\rangle})\cdot F_{B^4,\S;\boldsymbol{\omega}}(1),\] where  %\[y=y_1^{\langle\omega_1,[T_\alpha]\rangle}\cdots y_n^{\langle\omega_n,[T_\alpha]\rangle},\]
$\Delta_J$ is the mod 2 Alexander polynomial of $J$ and $T_\alpha$ is the rim 2-torus $\partial\nu\S|_{\alpha}$ centered about~$\alpha$.

Generally, it does not make sense to count factors of polynomials with real-valued exponents. However, \cite[Proposition 4.3]{JuhaszMillerZemke} shows that there exists a monomial $m \in \mathbb{F}_2[\R^n]$ such that $m\cdot F_{B^4,\S;\boldsymbol{\omega}}(1)\in \widehat{\HFL}(S^3,\partial\S)\otimes\mathbb{F}_2[\mathbb{Z}^n]$. Then $\Omega_{\boldsymbol{\omega}}(\S)$ is defined to be the largest number of irreducible factors of any element of $\mathbb{F}_2[\mathbb{Z}^n]$ dividing $m\cdot F_{B^4,\S;\boldsymbol{\omega}}(1)$, so in this language it is clear that $\Omega_{\boldsymbol{\omega}}(\S(\alpha;J))=\Omega_{\boldsymbol{\omega}}(\S)+k$, where $k$ is the number of irreducible factors of $\Delta_J(t)$ as a polynomial in $\F_2[t^{\pm}]$. Finally \cite[Section 6.2]{JuhaszMillerZemke} shows that $\Omega_{\boldsymbol{\omega}}$ is independent of the choice of $\boldsymbol{\omega}$ and set $\Omega:=\Omega_{\boldsymbol{\omega}}$. See \cite{juhaszzemkeconcordancesurgery} and \cite{JuhaszMillerZemke} for more details.
\end{remark}

%{\sout{\blue{A reader unfamiliar with link Floer homology may be confused about the meaning of the equation displayed in Lemma~\ref{lem:effectofrim}. To define the link Floer cobordism map induced by a surface, we must choose a spin$^c$ structure on the complement of that surface; we have conveniently ignored this discussion, as it is not relevant if one only wishes to obstruct surfaces from being smoothly isotopic rel.\ boundary. To avoid making such a choice, we may consider an invariant that is an element of the group ring on spin$^c$ structures with coefficients in link Floer homology, rather than a specific element of link Floer homology itself -- that is, we may view the invariant as a polynomial. Thus, we make sense of multiplying $F_{B^4,\S}(1)$ by another polynomial, which is the Alexander polynomial of $J$ with $\mathbb{Z}/2$ coefficients, evaluated on a term $z$ that depends on the choice of $\alpha$. See \cite{juhaszzemkeconcordancesurgery} or \cite{JuhaszMillerZemke} for more details.}}}\footnote{MM: What do you think about this? MP: I can't follow it at the end I'm afraid. Also I think it is a bit round-about. e.g. Do we wish to avoid making such a choice or not?  We may consider the invariant in the group ring on spinc structures: do we do this or not?   It would be better to directly say what we actually do, and perhaps hide the algebra less. Algebra is so nice. \SK{I think I'm also a bit confused by this paragraph, and the same things MP is confused about.}}

\subsection{Construction of exotic surface links}\label{sec:infinitefamily}

We now construct an infinite family of exotic 2--component Brunnian links. Each link will have a disk component and a genus one component. Fix~$K$ to be some strongly quasipositive topologically slice knot, e.g.\ the positive untwisted Whitehead double of the right-handed trefoil. This is topologically slice by \cite[Theorem~11.7B]{FreedmanQuinn} (as the Alexander polynomial of any untwisted Whitehead double is trivial), and strongly quasipositive by \cite{rudolphsqp}.

Let $\S=S_1\sqcup S_2$ be the surface link depicted in Figure~\ref{fig:infinitefamily}. We draw a disk and a genus one surface immersed in $S^3$ with ribbon intersections. Pushing the interiors of these surface slightly into $B^4$ yields disjoint surfaces. Moreover, each knot $\partial S_i$ is an unknot, $S_1$ is an unknotted disk, and $S_2$ is an unknotted genus one surface.

\begin{proposition}\label{prop:getdisks}
Let $\alpha$ be the curve on $S_2$ pictured in Figure~\ref{fig:infinitefamilytopslicedisk}. Then $\alpha$ bounds framed disks $\Delta$ and $\Delta'$ into the complement of $S_2$, with $\Delta$ smooth and $\Delta'$ locally flat, and with $\Delta'$ disjoint from $S_1$. Moreover, $\Delta$ and $\Delta'$ can be taken to agree near their common boundary~$\alpha$.
%\footnote{\SK{boundaries [it was singular before. Should it be plural? Or is there a cleaner way to rewrite the sentence?]} MP: I think it was singular because the two discs have the same boundary. I rewrote it to emphasise that they have common boundary.}
\end{proposition}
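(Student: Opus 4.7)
The plan is to construct $\Delta$ and $\Delta'$ by separate methods---one exploiting the smooth unknottedness of $S_2$, the other the topological sliceness of $K$---and then to ensure that the framings and boundary collars agree. The curve $C$ in Figure~\ref{fig:infinitefamilytopslicedisk} is a specific simple closed curve on $S_2$; I expect its knot type in $S^3$ to be identifiable with a knot derived from $K$, which is the essential point allowing Freedman's theorem to apply.

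For the smooth disk $\Delta$: because $S_2$ is smoothly unknotted as a genus one surface with unknotted boundary, there is a smooth ambient isotopy of $B^4$ carrying $S_2$ onto a standard genus one Seifert surface for the unknot. Under this isotopy, the curve $C$ is carried to a curve $C_0$ on the standard surface, which admits an obvious smooth compression disk in the complement. Pulling this disk back by the isotopy yields a smooth framed disk $\Delta \subset B^4 \setminus S_2$ bounded by $C$. This disk may cross $S_1$, which is allowed by the statement; indeed one expects $\Delta$ to cross $S_1$ essentially, since $K$ is not smoothly slice and this obstruction should be captured by such intersections.

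For the locally flat disk $\Delta'$: since $K$ is topologically slice, Freedman's disk embedding theorem \cite{FreedmanQuinn} gives a locally flat slice disk for $K$ in $B^4$, which via the identification of $C$ with $K$ provides a locally flat disk bounded by $C$. I would then modify this disk to be disjoint from both $S_1$ and $S_2$: disjointness from $S_2$ is achieved by replacing a collar of $\partial \Delta'$ with a pushoff of $C$ off $S_2$ using the prescribed framing, and disjointness from $S_1$ follows by a general position argument in the locally flat category using~\cite[Chapter~9]{FreedmanQuinn}. The framings on $\Delta$ and $\Delta'$ are taken to be the framing induced by $S_2$ at $C$, and agreement near the boundary is arranged by constructing both disks to extend a common collar $C \times [0,\epsilon] \subset B^4$ chosen at the outset. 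The main technical obstacle is positioning $\Delta'$ so that it avoids $S_1$ while remaining locally flat, which relies on topological general position and the flexibility of Freedman's construction; an additional subtlety is identifying $C$ precisely enough to invoke topological sliceness of $K$.
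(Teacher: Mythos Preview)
Your construction of $\Delta'$ has a genuine gap. The step ``disjointness from $S_1$ follows by a general position argument in the locally flat category'' is wrong: two locally flat $2$-disks in a $4$-manifold generically meet transversely in finitely many points, and no perturbation removes these intersections. An abstractly produced Freedman disk for $C$ has no reason to miss $S_1$, and you cannot push it off after the fact. You also misidentify $C$: it is not $K$ but an \emph{unknot} in $S^3$. The paper's $\Delta'$ is instead built explicitly from the picture: one places a locally flat slice disk for $K$ in each of the two spots where $C$ runs parallel to a copy of $K$ along a band of $S_2$, and glues these two disks along the arc where their boundaries coincide. Because these slice disks live in a prescribed region of $B^4$---behind $S_2$, and meeting $S_1$ only in ribbon arcs in $S^3$ that disappear upon pushing into the interior---disjointness from $S_1 \cup S_2$ is manifest from the construction rather than arranged afterward. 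This explicit locality is the entire content of the argument, and it cannot be replaced by an abstract existence statement plus perturbation.

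Your approach to $\Delta$ via the smooth unknottedness of $S_2$ is valid but more elaborate than needed. Since $C$ is already an unknot in $S^3$ and the $S_2$-framing on it is the $0$-framing (the diagram of $K$ is taken with writhe zero), the paper simply takes a Seifert disk for $C$ in $S^3$ and pushes its interior into $B^4$ below the ribbon surface $S_2$. The matching $0$-framings then immediately give agreement of $\Delta$ and $\Delta'$ near $C$.
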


\begin{proof}
In the left two frames of Figure~\ref{fig:infinitefamilytopslicedisk}, we illustrate that $\alpha$ bounds a framed locally flat disk $\Delta'$ whose interior is disjoint from both $S_2$ and $S_1$. The construction of the disk is explained in detail in the caption of Figure~\ref{fig:infinitefamilytopslicedisk}. 
This disk is obtained from the two schematically pictured locally flat slice disks for the topologically slice knot $K$ by gluing them along the arc where their boundaries coincide. 
%Again by Lemma~\ref{lemma:when-rim-surgery-doesnt-change-anything}, $L(C;J,1)$ is topologically isotopic to $L(C;J,1)$.

Viewed as a knot in $S^3$ on the ribbon surface $S_2$, $\alpha$ is an unknot and the framing induced by $S_2$ on $\alpha$ is the 0-framing. We conclude that $\alpha$ bounds a framed smooth disk $\Delta$ into $B^4$ that is disjoint from $S_2$ in its interior. Moreover, as drawn, $\Delta'$ induces the 0-framing on $\alpha$ as a knot in $S^3$, so we can arrange for $\Delta$ and $\Delta'$ to agree near their boundaries. %By Lemma~\ref{lemma:when-rim-surgery-doesnt-change-anything}, $S_2(C;J,1)$ is smoothly isotopic to $S_2(C;J,1)$.
\end{proof}

Now that we have specified framed disks (that agree near their boundaries) bounded by $\alpha$, we can consider $n$-twisted rim surgery on $\alpha$.

\begin{corollary}\label{cor:gettopiso}
For any knot $J$, $S_2(\alpha;J,1)$ is smoothly isotopic rel.\ boundary to $S_2$ and $\S(\alpha;J,1)$ is topologically isotopic rel.\ boundary to $\S$.
\end{corollary}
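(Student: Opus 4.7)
The plan is a direct two-fold application of Lemma~\ref{lemma:when-rim-surgery-doesnt-change-anything}, using each of the two framed bounding disks for $C$ provided by Proposition~\ref{prop:getdisks}. First, for the smooth claim, I will temporarily view $S_2$ as a standalone smooth surface in $B^4$, disregarding $S_1$. The smooth framed disk $\Delta$ has interior contained in $B^4 \sm S_2$, so Lemma~\ref{lemma:when-rim-surgery-doesnt-change-anything} applied with $\CAT = \DIFF$ to the pair $(B^4, S_2)$ and the curve $C$ immediately produces a smooth ambient isotopy of $B^4$, rel.\ $S^3$, from $S_2(C;J,1)$ to $S_2$.

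For the topological claim, I will apply Lemma~\ref{lemma:when-rim-surgery-doesnt-change-anything} with $\CAT = \TOP$ to the whole surface link $\S = S_1 \sqcup S_2$. The locally flat framed disk $\Delta'$ supplied by Proposition~\ref{prop:getdisks} has interior disjoint from both $S_1$ and $S_2$, so its interior lies in $B^4 \sm \S$. Since rim surgery along $C$ is supported in an arbitrarily small tubular neighborhood of $C \subseteq S_2$, which can be taken disjoint from $S_1$, we have $\S(C;J,1) = S_1 \sqcup S_2(C;J,1)$, and the lemma yields the desired topological ambient isotopy rel.\ $S^3$ from $\S(C;J,1)$ to $\S$.

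The only subtle point to address is a consistency check: the notation $S_2(C;J,1)$ must denote the same surface in both statements of the corollary, which requires that the framings of $C$ induced by $\Delta$ and by $\Delta'$ coincide. This is precisely what is guaranteed by the final clause of Proposition~\ref{prop:getdisks}, which asserts that $\Delta$ and $\Delta'$ agree near their common boundary $C$; thus they induce the same framing $\eta$ of the 2-subbundle of the normal bundle of $C$ normal to $S_2$, so the two $1$-twisted rim surgeries produce identical surfaces. No step presents a genuine obstacle beyond this compatibility, so the corollary reduces to two invocations of Lemma~\ref{lemma:when-rim-surgery-doesnt-change-anything}.
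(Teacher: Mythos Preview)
Your proposal is correct and matches the paper's approach exactly: the paper's proof is simply ``This follows immediately from Proposition~\ref{prop:getdisks} and Lemma~\ref{lemma:when-rim-surgery-doesnt-change-anything},'' and you have unpacked precisely those two invocations, together with the framing-compatibility remark that the paper handles via the final clause of Proposition~\ref{prop:getdisks}.
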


\begin{proof}
This follows immediately from Proposition~\ref{prop:getdisks} and Lemma~\ref{lemma:when-rim-surgery-doesnt-change-anything}.
\end{proof}

\begin{figure}
\labellist
\pinlabel \red{parallel locally flat disks for $K$, whose union} at 315 10
\pinlabel \red{along the upper horizontal arc forms $\Delta'$} at 315 -15
%\pinlabel \red{$\Delta'$} at 600 0
\endlabellist
\includegraphics[width=120mm]{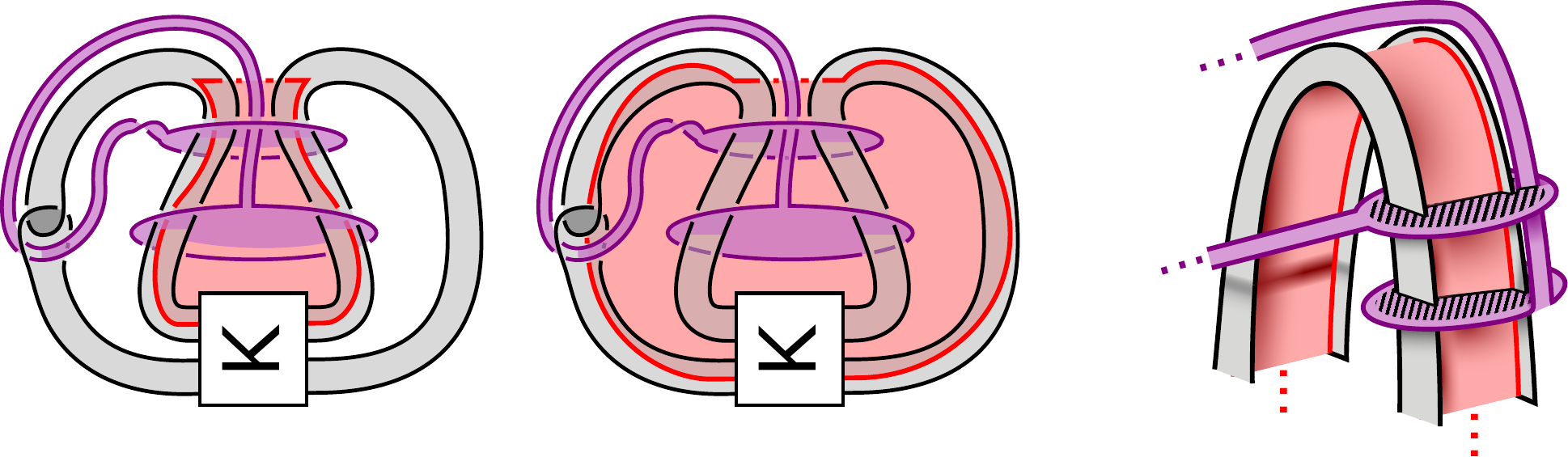}
\vspace{.1in}
\caption{%\textbf{Left:} a curve $C$ on $S_2$. Since $K$ has writhe zero as pictured, $S_2$ induces the 0-framing on $C$ (viewed as a surface in $S^3$). 
{Left two panels:} because $K$ is topologically slice, $\alpha$ bounds a topological locally flat disk $\Delta'$ whose interior is disjoint from both $S_2$ and $S_1$ (and which is normal to $S_2$ near its boundary). The disk is obtained by gluing the two indicated locally flat disks together along the two planar surfaces indicated, which are themselves glued along an arc. The visible portion of $\Delta'$ (projected to $S^3$) intersects $S_1$ in two ribbon arcs. {Right panel:} A close-up of a portion of the first two figures. In the projection to $S^3$, the indicated portion of $\Delta'$ intersects $S_1$ in two ribbon intersections. The projections of $S_1,S_2$ also have ribbon intersections, and the projection of $S_1$ has a ribbon self-intersection. In $B^4$, the shaded regions of $S_1$ lie further toward the center of $B^4$, so that the interiors of all surface are disjoint.}
\label{fig:infinitefamilytopslicedisk}
\end{figure}

In order to apply Lemma~\ref{lem:effectofrim} and obstruct smooth isotopy rel.\ boundary of the surface links, we must show that the map $F_{B^4,L}:\widehat{\HFK}(\emptyset)\to\widehat{\HFK}(S^3,\partial \S)$ is nonvanishing. We achieve this by obtaining a strongly quasipositive surface from $\S$, even though $\S$ itself is {\emph{not}} strongly quasipositive (since $S_2$ is not).

\begin{figure}
\labellist
  \pinlabel $\blue{b}$ at 60 70
\endlabellist
\includegraphics[width=90mm]{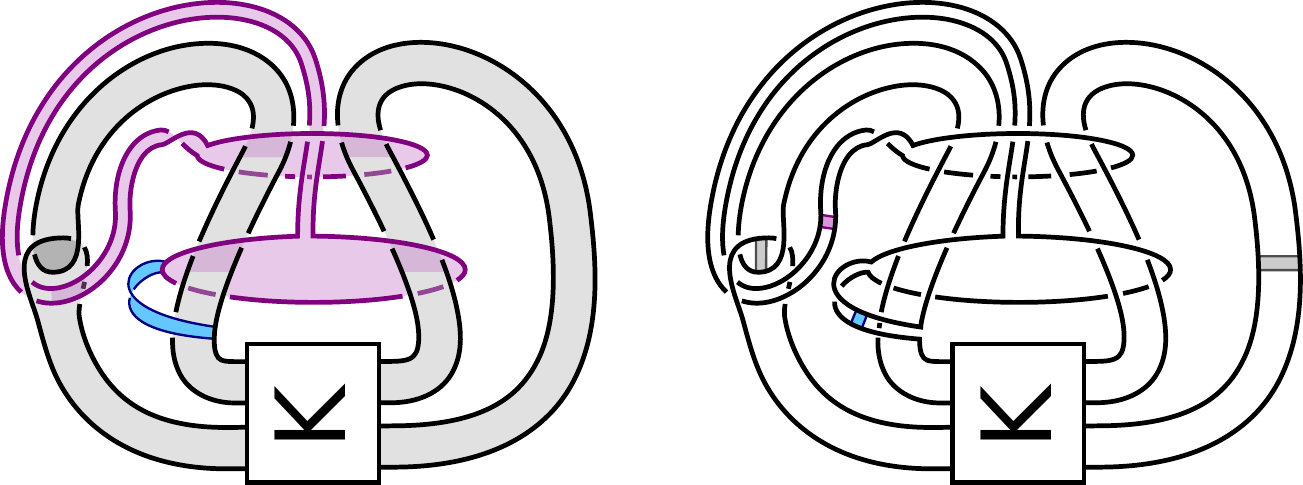}
\caption{  {Left:} we form a connected surface $S$ from $S_2$ and $S_1$ via attaching the band $b$ pictured.   {Right:} We draw the knot $\partial S$ with four bands contained in $S$ representing index-1 critical points of the radial height function on $B^4$ restricted to $S$. Surgering $\partial S$ along these bands yields an unlink; disks bounded by this unlink give the minima of $S$.} \label{fig:infinitefamilyband}
\end{figure}

\begin{proposition}\label{prop:bandgetstd}
Let $S$ be the connected surface obtained from $\Sigma$ by attaching $S_2$ and $S_1$ via the band depicted in the left of Figure~\ref{fig:infinitefamilyband} $($with interior pushed slightly into $B^4)$. $S$ is smoothly isotopic to the standard genus one Seifert surface for $\Wh_+(\Wh_+(K))$ $($with interior pushed slightly into $B^4)$.
\end{proposition}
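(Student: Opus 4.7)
The plan is to verify the claim directly from Figure~\ref{fig:infinitefamilyband} by identifying both $S$ and the standard Seifert surface for $\Wh_+(\Wh_+(K))$ as movies in $B^4$ and then matching them up.

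First, Figure~\ref{fig:infinitefamilyband}(right) exhibits $S$ as a movie: the boundary $\partial S$ is drawn in $S^3$, four bands contained in $S$ serve as the saddles for the radial height function restricted to $S$, and after surgering $\partial S$ along these four bands one obtains an unlink that is capped off by disjoint disks playing the role of the minima of $S$. This movie determines $S$ up to smooth ambient isotopy rel.\ boundary. I would then identify $\partial S$ by tracing the diagram: banding the unknot $\partial S_1$ to the unknot $\partial S_2$ via $b$ produces the knot $\Wh_+(\Wh_+(K))$, where the two bottom bands of $S_2$ threaded through $K$ contribute the \emph{inner} positive clasping around $K$, and the band $b$ together with the ribbon intersections between $S_1$ and $S_2$ contribute the \emph{outer} positive clasping around $\Wh_+(K)$.

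Next, I would recall that the standard genus-one Seifert surface for $\Wh_+(J)$ of a knot $J$ is constructed by taking an annular neighborhood of $J$ and attaching a single band forming a positive clasp, yielding a once-punctured torus bounded by $\Wh_+(J)$. Pushed into $B^4$, this realizes a movie with two saddle bands, so iterating for $J=\Wh_+(K)$ yields four saddles in total, matching those in Figure~\ref{fig:infinitefamilyband}(right). With an explicit matching of the four bands, the resulting unlink after surgery, and the capping disks, a sequence of handle slides and isotopies converts the movie for $S$ into that of the standard pushed-in Seifert surface for $\Wh_+(\Wh_+(K))$, yielding the desired smooth ambient isotopy rel.\ boundary.

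The main obstacle will be the diagrammatic bookkeeping required to carry out the matching of movies.  In particular, one must carefully track how each ribbon intersection between $S_1$ and $S_2$ in Figure~\ref{fig:infinitefamily} is resolved into a saddle when the interiors are pushed into $B^4$, and verify that the signs of the resulting clasps are consistently positive.  This is routine Kirby-calculus-style work, but the sign-tracking is essential: it is what distinguishes the strongly quasipositive Seifert surface of $\Wh_+(\Wh_+(K))$ (which is needed for the nonvanishing cobordism map via Remark~\ref{rem:usefulprop}\,\eqref{prop2}) from other Seifert surfaces of the same boundary.
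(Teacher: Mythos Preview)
Your overall strategy of comparing movies is aligned with the paper's, but there is a concrete error in the count that breaks the proposed matching. The standard genus-one Seifert surface for $\Wh_+(\Wh_+(K))$ is a once-punctured torus regardless of the companion; as you say, it is an annulus following the companion knot plus a single clasp band, so after pushing into $B^4$ its minimal movie has \emph{one} minimum and \emph{two} saddles. The ``iteration'' you invoke does not add saddles: the inner Whitehead doubling changes only which knot the annulus follows, not the handle structure of the Seifert surface. So the target has two saddle bands, not four, and there is no direct four-to-four matching available.

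The paper's argument hinges on exactly this asymmetry. The surface $S$ as presented has three minima and four saddles. After an isotopy (Figure~\ref{fig:infinitefamilyisotopy}) two of those four bands, labelled $b_1$ and $b_2$, are identified as lying on the standard Seifert surface for $\Wh_+(\Wh_+(K))$. The essential step (Figure~\ref{fig:infinitefamilycancel}) is then to surger along $b_1$ and $b_2$ and check that what remains is an unknot with two \emph{trivial} bands attached; this shows the extra two saddles cancel with two of the minima, reducing $S$ to one minimum and two saddles --- the standard surface. Your proposal omits this cancellation step entirely, and without it the argument does not close: matching boundaries and counting critical points is not enough, since two genus-one surfaces with the same boundary need not be isotopic.
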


\begin{proof}
In Figure~\ref{fig:infinitefamilyisotopy}, we isotope $S$ to see $\partial S=\Wh_+(\Wh_+(K))$ (see also Figure \ref{fig:whwhk}). The surface $S$ is embedded in $B^4$ with four index-1 and three index-0 points with respect to the standard radial height function on $B^4$. The index-1 points are flattened and drawn as bands attached to $\partial S$; surgering $\partial S$ along these bands yields a 3--component unlink. Moreover, two of these bands, labeled $b_1$ and $b_2$, lie on the standard genus one Seifert surface for $\Wh_+(\Wh_+(K))$. After surgering $\partial S$ along these bands, we obtain an unknot with two trivial bands attached. We conclude that the remaining two bands (i.e.\ index-1 points of $S$) can be removed via isotopy of $S$. Thus, $S$ is isotopic to the standard genus one Seifert surface for $\Wh_+(\Wh_+(K))$.
\end{proof}

\begin{figure}
\labellist
  \pinlabel {slide bands} at 640 500
  \pinlabel {$b_1$} at 365 135
  \pinlabel {$b_2$} at 640 105
\endlabellist
\includegraphics[width=120mm]{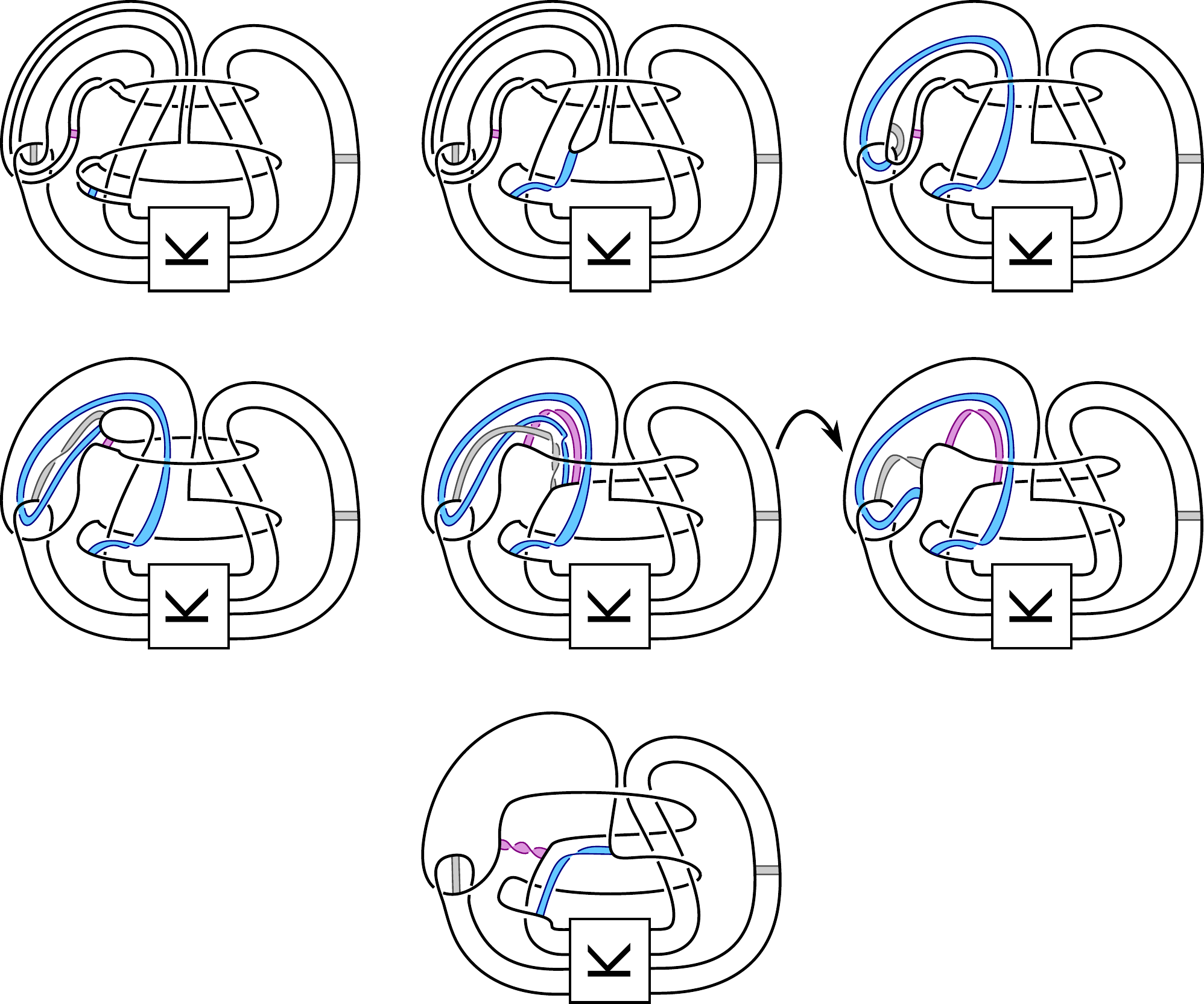}
\caption{Left to right, top to bottom: we isotope the diagram for $S$ obtained in Figure~\ref{fig:infinitefamilyband}. Between the fifth and sixth frame, we slide two bands over another band. Isotopy and slides of bands describe isotopy of $S$. In the final diagram, we see that $\partial S=\Wh_+(\Wh_+(K))$ and note that the bands $b_1$ and $b_2$ lie on the standard genus one surface for $\Wh_+(\Wh_+(K))$.} \label{fig:infinitefamilyisotopy}
\end{figure}

\begin{figure}
\labellist
  \pinlabel {$\Wh_+(K)$} at 60 -15
  \pinlabel {$\Wh_+(\Wh_+(K))$} at 210 -15
  \pinlabel {$\Wh_+(\Wh_+(K))$} at 362 -15
\endlabellist
    \includegraphics[width=120mm]{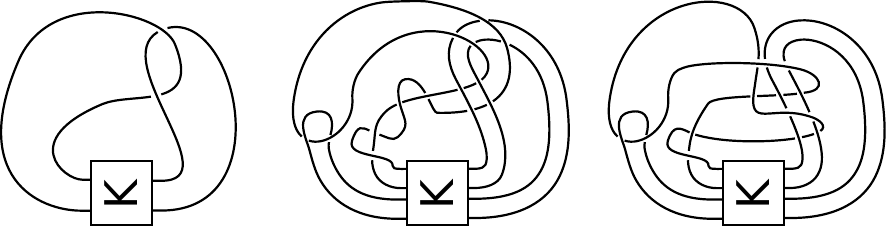}
    \vspace{.2in}
    \caption{  {Left:} The knot $\Wh_+(K)$. Note that this diagram has writhe 2.   {Middle:} The knot $\Wh_+(\Wh_+(K))$.   {Right:} we isotope the middle drawing of $\Wh_+(\Wh_+(K))$ to agree with $\partial S$ as in the last panel of Figure \ref{fig:infinitefamilyisotopy}.}
    \label{fig:whwhk}
\end{figure}

\begin{figure}
\labellist
 \pinlabel {resolve} at 340 195
 \pinlabel {two} at 340 165
 \pinlabel {bands} at 340 135
\endlabellist
\includegraphics[width=120mm]{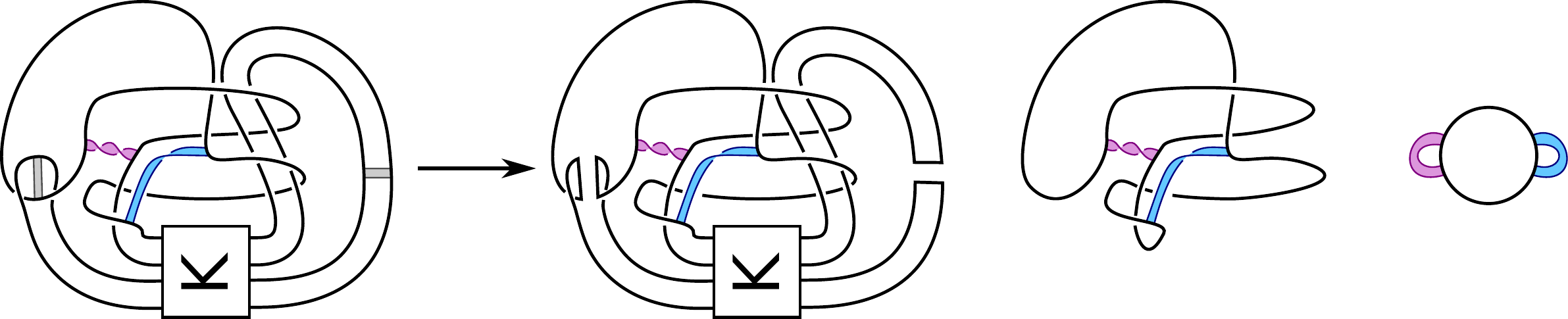}
\caption{Starting with the last diagram of Figure~\ref{fig:infinitefamilyisotopy}, we surger $\partial S$ along the two bands lying on the standard surface for $\Wh_+(\Wh_+(K))$. We then isotope the result to obtain the diagram on the right: two trivial bands attached to an unknot. We conclude that the remaining two bands are cancellable, and therefore $S$ is isotopic to the standard genus one surface for $\Wh_+(\Wh_+(K))$. \label{fig:infinitefamilycancel}}
\end{figure}

\begin{corollary}\label{cor:nonvanish}
The map $F_{B^4,\Sigma}\colon \widehat{\HFK}(\emptyset)\to\widehat{\HFK}(S^3,\partial \Sigma)$ is nonvanishing.
\end{corollary}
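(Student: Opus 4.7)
The plan is to factor the cobordism map $F_{B^4,L}$ through the map associated to the strongly quasipositive surface $S$ of Proposition~\ref{prop:bandgetstd} and invoke the composition law from Remark~\ref{rem:usefulprop}~\eqref{prop1} together with the nonvanishing statement of Remark~\ref{rem:usefulprop}~\eqref{prop2}.

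First, I would view the band $b$ of Figure~\ref{fig:infinitefamilyband} as defining a decorated link cobordism $(S^3\times I,\, B)$ from $(S^3,\partial L)$ to $(S^3,\partial S)$, where $B$ is a saddle cobordism realized by attaching $b$ at the top of a collar on $S^3$ and then filling in with a product cobordism. Choosing basepoints and decoration arcs on $\partial L$ and $\partial S$ that are compatible with the band (e.g.\ so that $B$ is divided into $B_w$ and $B_z$ pieces consistently with trivial decorations on $L$ and $S$), we then obtain a decomposition
\[
(B^4,S)\;=\;(B^4,L)\,\cup\,(S^3\times I, B)
\]
of decorated cobordisms from $(\emptyset,\emptyset)$ to $(S^3,\partial S)$.

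By the composition law of Remark~\ref{rem:usefulprop}~\eqref{prop1},
\[
F_{B^4,S}\;=\;F_{S^3\times I,\, B}\circ F_{B^4,L}
\]
as maps $\widehat{\HFL}(\emptyset)\to \widehat{\HFL}(S^3,\partial S)$. By Proposition~\ref{prop:bandgetstd}, $S$ is smoothly isotopic rel.\ boundary to the standard genus one Seifert surface for $\Wh_+(\Wh_+(K))$ pushed into $B^4$. Since $K$ is strongly quasipositive and positive untwisted Whitehead doubling preserves strong quasipositivity, this Seifert surface is strongly quasipositive, so Remark~\ref{rem:usefulprop}~\eqref{prop2} gives that $F_{B^4,S}$ is nonvanishing.

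From the factorization above, if $F_{B^4,L}$ vanished, then $F_{B^4,S}$ would vanish as well, contradicting the previous paragraph. Hence $F_{B^4,L}$ is nonvanishing, proving the corollary. The only point requiring care is the bookkeeping of basepoints and decorations so that the composition law applies cleanly; this is a matter of placing the $w$ and $z$ markings on $\partial L$ and $\partial S$ to be compatible with the single band $b$ (choosing decoration arcs on $B$ that separate the $w$-region from the $z$-region across the saddle). No other technical obstacles are expected, since once the decomposition is realized, the argument is a direct application of functoriality.
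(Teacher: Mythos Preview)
Your proof is correct and matches the paper's approach essentially line for line: factor through the band cobordism $B$, apply the composition law $F_{B^4,S}=F_{S^3\times I,B}\circ F_{B^4,L}$, and invoke nonvanishing for the strongly quasipositive surface $S$ via Remark~\ref{rem:usefulprop}~\eqref{prop2}. The only difference is that you spell out the decoration bookkeeping a bit more explicitly than the paper does.
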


\begin{proof}
Let $B$ denote the link cobordism described by the band $b$, as shown on the left of Figure~\ref{fig:infinitefamilyband}. This $B$ is a cobordism from $\partial \S$ to $\Wh_+(\Wh_+(K))$. By Proposition~\ref{prop:bandgetstd}, $B\circ \S$ is isotopic to the standard genus one Seifert surface $S$ for $\Wh_+(\Wh_+(K))$. By Remark~\ref{rem:usefulprop}~\eqref{prop1}, we have \[F_{B^4,S}=F_{S^3\times I, B}\circ F_{B^4,\S}.\]  Since $K$ is a strongly quasipositive knot, $S$ is strongly quasipositive surface \cite{rudolphsqp}. By Remark~\ref{rem:usefulprop}~\eqref{prop2}, $F_{B^4,S}$ is nonvanishing. Thus, $F_{B^4,\S}$ is also nonvanishing.
\end{proof}

\begin{thm:intro-surface-links}
Let $K$ be a strongly quasipositive topologically slice knot and let $\S_0$ denote the surface link of Figure~\ref{fig:infinitefamily}. 
For each integer $n \geq 1$, let $J_n \subseteq S^3$ be a knot whose Alexander polynomial has $n$ irreducible factors $($counted with multiplicity$)$. 
Let $\S_n:=\S_0(\alpha;J_n,1)$, where $\alpha$ is the curve on $S_2$ illustrated in Figure~\ref{fig:infinitefamilytopslicedisk}. Then each $\S_n$ is Brunnian. Additionally, for $n\neq m$, we find that $\S_n$ and $\S_m$ form an exotic pair.
%Fix $K$ to be some strongly quasipositive topologically slice knot. Let $\S_0:=S_1\sqcup S_2$ be the surface link pictured in Figure~\ref{fig:infinitefamily}. Let $C$ be the curve on $S_2$ illustrated in Figure~\ref{fig:infinitefamilytopslicedisk}. For each $n\in\N$, let $J_n$ be a knot whose Alexander polynomial has $n$ irreducible factors counted with multiplicity $($e.g.\ $J_n$ may be an $n$-fold connected sum of trefoil knots$)$. Let $\S_n:=\S(C;J_n,1)$. Then each $\S_n$ is Brunnian. Additionally, for $n\neq m$, we find that $\S_n$ and $\S_m$ form an exotic pair.
\end{thm:intro-surface-links}

The positive untwisted Whitehead double of the right-handed trefoil is one possible choice for $K$. The knot $J_n$ may, for example, be a connect sum of $n$ trefoils.

\begin{remark}\label{remark:distinct-alex-polys}
In Theorem~\ref{thm:intro-surface-links}, if we ask only that for $n\neq m$, $J_n$ and $J_m$ have inequivalent mod 2 Alexander polynomials, up to multiplication by a unit in $\mathbb{F}_2[t^{\pm 1}]$, rather than that their mod 2 Alexander polynomials have different numbers of irreducible factors, then we may still conclude that $\S_n$ and $\S_m$ are not smoothly isotopic rel.\ boundary by applying Lemma~\ref{lem:effectofrim} and Corollary~\ref{cor:nonvanish}. But, for us at least (and also for \cite{JuhaszMillerZemke}), the invariant $\Omega$ is needed to obstruct diffeomorphism of pairs.
\end{remark}

%
%\SK{I think this remark is missing something, either some context or justification. I read this and thought, ``why are we asking for this stronger thing if we don't need it? Is this remark just cultural enrichment?"  MP: If we only wanted to conclude that the surfaces are not isotopic rel boundary, then we would be able to differentiate many more rim surgeries: just need the Alexander polynomials to be different. To obstruct diffeomorphism of pairs is harder, uses a more sophisticated invariant, and needs a stronger hypothesis on the Alexander polynomials.}

\begin{proof}[Proof of Theorem \ref{thm:intro-surface-links}]
Note that $\S_0$ is Brunnian by construction.  It follows from Corollary~\ref{cor:gettopiso} that each $\S_n$ is Brunnian, and that for all $n$,  $\S_n$ is topologically isotopic rel.\ boundary to $\S_0$.
%The last sentence follows easily by combining Corollary~\ref{cor:nonvanish} with Lemma~\ref{lem:effectofrim}: for distinct Alexander polynomials .
 %$\Omega(\S_n)=\Omega(\S_0)+n$. We conclude that if $n\neq m$ then $(B^4,\S_n)$ and $(B^4,\S_m)$ are not diffeomorphic. In particular, $\S_n$ and $\S_m$ are certainly not smoothly isotopic rel.\ boundary.

Assume $n<m$. Let $S$ denote the genus one Seifert surface for $\Wh_+(\Wh_+(K))$ with interior pushed slightly into $B^4$. By Proposition~\ref{prop:bandgetstd}, there is a band $b$ so that $\S_n\cup b$ is isotopic rel.\ boundary to $S(\alpha;J_n,1)$. Applying Remark~\ref{rem:usefulprop}~\eqref{prop2} and Lemma~\ref{lem:effectofrim}, we deduce that %Recall from Remark~\ref{rem:usefulprop}~\eqref{prop2} that 
$\Omega(S(\alpha;J_n,1))=n$. Suppose there is a diffeomorphism $f\colon (B^4,\S_n)\to (B^4,\S_m)$. We cannot assume that $f$ restricts to the identity on $\partial B^4$, so this does not yield an equivalence from $S(\alpha;J_n,1)$ to $S(\alpha;J_m,1)$. Instead, we obtain an equivalence from $(B^4,S(\alpha;J_n,1))$ to $(B^4,f(b)\cup \S_m)$, where $f(b)$ is some other band attached to $\partial \S_m$. %with the property that surgering $\S_m$ along $b$ yields $\Wh_+(\Wh_+(K))$ and
Note that $f(b)\cup \S_m$ is obtained from $f(b)\cup \S_0$ by $1$-twist rim surgery along $\alpha$ using $J_m$. Applying Lemma~\ref{lem:effectofrim}, we have either %Then by Remark~\ref{rem:usefulprop}~\eqref{prop2}, either
\[\begin{array}{rcll}
\Omega(f(b)\cup \S_m)&=&-\infty &\text{if $F_{B^4,f(b)\cup \S_0}$ vanishes, or}\\
\Omega(f(b)\cup \S_m) &\ge & m &\text{if $F_{B^4,f(b)\cup \S_0}$ does not vanish}.
\end{array}\]
%\footnote{MP: changed formatting, is it ok?}

In both cases, $\Omega(f(b)\cup \S_m)\neq \Omega(S(\alpha;J_n,1))=n$, yielding a contradiction. We conclude that there is no diffeomorphism from $(B^4,\S_n)$ to $(B^4,\S_m)$.
\end{proof}

\section{Surfaces of higher genus}\label{section:surfaces-of-higher-genus}

Next, we extend the construction of Section~\ref{sec:infinitefamily} to produce higher genus surfaces, proving Theorem~\ref{thm:increasing-the-genus}. From now on, let $\S_0=S_1\cup S_2$ be as in Theorem~\ref{thm:intro-surface-links} (i.e.\ the surface link in Figure~\ref{fig:infinitefamily}, with $K$ a topologically slice, strongly quasipositive knot). Recall that $S_1$ has genus zero and $S_2$ has genus one. In shorthand, we write that $\S_0$ has genus $(0,1)$.

In order to use $\S_0$ to produce exotic surfaces of higher genus, first we  describe a certain high genus Brunnian link. Fix a positive integer $g$ and let $R^g=R^g_1\sqcup R^g_2$ be the surface link in $B^4$ illustrated in Figure~\ref{fig:linkR}. We also draw a band $b^g_R$ between the two components of $R^g$. The link $R^g$ is Brunnian of genus $(0,g)$.

\begin{figure}
\labellist
 \pinlabel \textcolor{darkpurple}{$R^g_1$} at 60 10
 \pinlabel $R^g_2$ at -10 100
 \pinlabel \textcolor{blue}{$b^g_R$} at 5 40
\pinlabel $\cdots$ at 249 110
\pinlabel {genus $g$} at 249 95
\pinlabel $\cdots$ at 249 15
\endlabellist
\includegraphics[width=100mm]{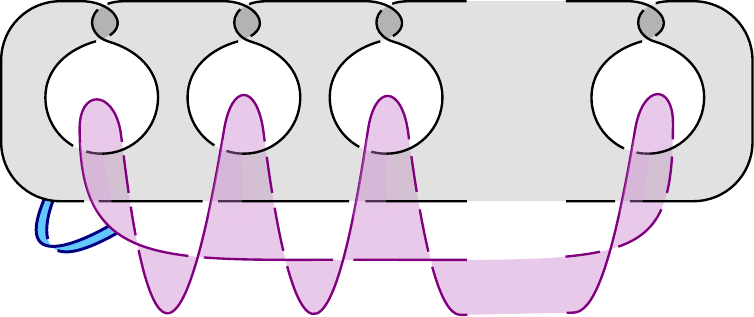}
\caption{A 2--component surface link $R^g=R_1^g\sqcup R_2^g$ and a band $b^g_R$ between the components of $R^g$. Note that $R_1^g$ is an unknotted disk and $R^g_2$ is an unknotted genus $g$ surface. Viewed as a ribbon surface in $S^3$, there are $g$ arcs of intersection between the pictured ribbon surfaces.}\label{fig:linkR}
\end{figure}

\begin{proposition}\label{prop:gettrefoil}
The surface $\widetilde{R}^g$ obtained by gluing $R^g_1$ and $R^g_2$ together along the band $b^g_R$, as in Figure~\ref{fig:linkR}, is isotopic to a quasipositive Seifert surface $($pushed slightly into $B^4)$.
\end{proposition}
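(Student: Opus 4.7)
The plan is to follow the template of Proposition~\ref{prop:bandgetstd} with the band $b^g_R$ playing the role of $b$. First, I would present $\widetilde{R}^g$ as the knot $\partial\widetilde{R}^g\subseteq S^3$ together with a collection of flattened index-$1$ bands: one band coming from $b^g_R$, $g$ bands from resolving the ribbon intersections between $R^g_1$ and $R^g_2$ in Figure~\ref{fig:linkR}, and $2g$ further bands coming from the handles of $R^g_2$. Surgery along all of these bands leaves an unlink bounding the index-$0$ minima, giving a standard movie description of $\widetilde{R}^g$ in $B^4$.

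The key step is then to manipulate this band diagram, via band slides and pair cancellations as in Figures~\ref{fig:infinitefamilyisotopy}--\ref{fig:infinitefamilycancel}, until $\partial\widetilde{R}^g$ is visibly the closure of a positive braid and $2g$ of the bands are recognized as the Bennequin/Seifert bands of the corresponding positive Seifert surface, with the remaining bands appearing in cancelling pairs that can be removed by isotopy. For $g=1$ the picture is essentially the $K=\text{unknot}$ specialization of Proposition~\ref{prop:bandgetstd}, and one expects the resulting knot for general $g$ to be a standard positive knot such as the $(2,2g+1)$ torus knot obtained by $g$-fold positive Hopf plumbing along the unknot. Since any positive braid Seifert surface is strongly quasipositive by \cite{rudolphsqp}, this identifies $\widetilde{R}^g$ (once its interior is pushed slightly into $B^4$) with the desired positive Seifert surface.

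The main obstacle will be the diagrammatic bookkeeping of orientations and crossing signs at each of the $g$ ribbon intersections, in order to check that the $2g$ surviving bands induce \emph{positive} rather than negative twisting. This is essentially a local computation at each ribbon intersection region in Figure~\ref{fig:linkR}, but it does require an inductive or $g$-by-$g$ inspection rather than a purely conceptual argument; the cleanest way to carry it out would be to produce explicit figures analogous to Figures~\ref{fig:infinitefamilyband}--\ref{fig:infinitefamilycancel} for general $g$, using the model case $g=1$ as a guide.
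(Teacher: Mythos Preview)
Your approach is essentially the paper's: present $\widetilde{R}^g$ via its boundary together with flattened index-$1$ bands, then manipulate by isotopy and band slides until the picture is visibly a positive Seifert surface pushed into $B^4$.  Two small corrections to your expectations.  First, the target surface in the paper is the standard genus~$g$ Seifert surface for the connected sum of $g$ right-handed trefoils, not the $(2,2g+1)$ torus knot; the $g$ ribbon regions in Figure~\ref{fig:linkR} each contribute an independent trefoil summand rather than plumbing into a single torus knot.  Second, because $\widetilde{R}^g$ already has genus $g$, no index-$0$/index-$1$ pair cancellations are needed---the paper's Figure~\ref{fig:linkR_isotopy} uses only isotopy and a single family of band slides (sliding the band coming from $b^g_R$ over the $g$ ``handle'' bands) to arrive directly at the positive Seifert surface.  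So your plan is correct but slightly overcomplicated; the actual argument is shorter than the template of Proposition~\ref{prop:bandgetstd}.
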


\begin{proof}
In Figure~\ref{fig:linkR_isotopy}, we illustrate an isotopy from $\widetilde{R}^g$ to a quasipositive Seifert surface (slightly pushed into $B^4$) for a connected sum of $g$ right-handed trefoil knots.
\end{proof}

\begin{figure}
\labellist
\pinlabel {\LARGE$\cdots$} at 250 690
\pinlabel {\LARGE$\cdots$} at 250 495
\pinlabel {\LARGE$\cdots$} at 250 300
\pinlabel {\LARGE$\cdots$} at 250 105

\pinlabel {\LARGE$\cdots$} at 250 0
\pinlabel {\LARGE$\cdots$} at 250 150
\pinlabel {\LARGE$\cdots$} at 250 195
\pinlabel {\LARGE$\cdots$} at 250 235
\pinlabel {\LARGE$\cdots$} at 250 345
\pinlabel {\LARGE$\cdots$} at 250 390
\pinlabel {\LARGE$\cdots$} at 250 415
\pinlabel {\LARGE$\cdots$} at 250 445
\pinlabel {\LARGE$\cdots$} at 250 539
\pinlabel {\LARGE$\cdots$} at 250 583
\pinlabel {\LARGE$\cdots$} at 250 610
\pinlabel {\LARGE$\cdots$} at 250 639
\pinlabel {\LARGE$\cdots$} at 250 734
\endlabellist
\includegraphics[width=90mm]{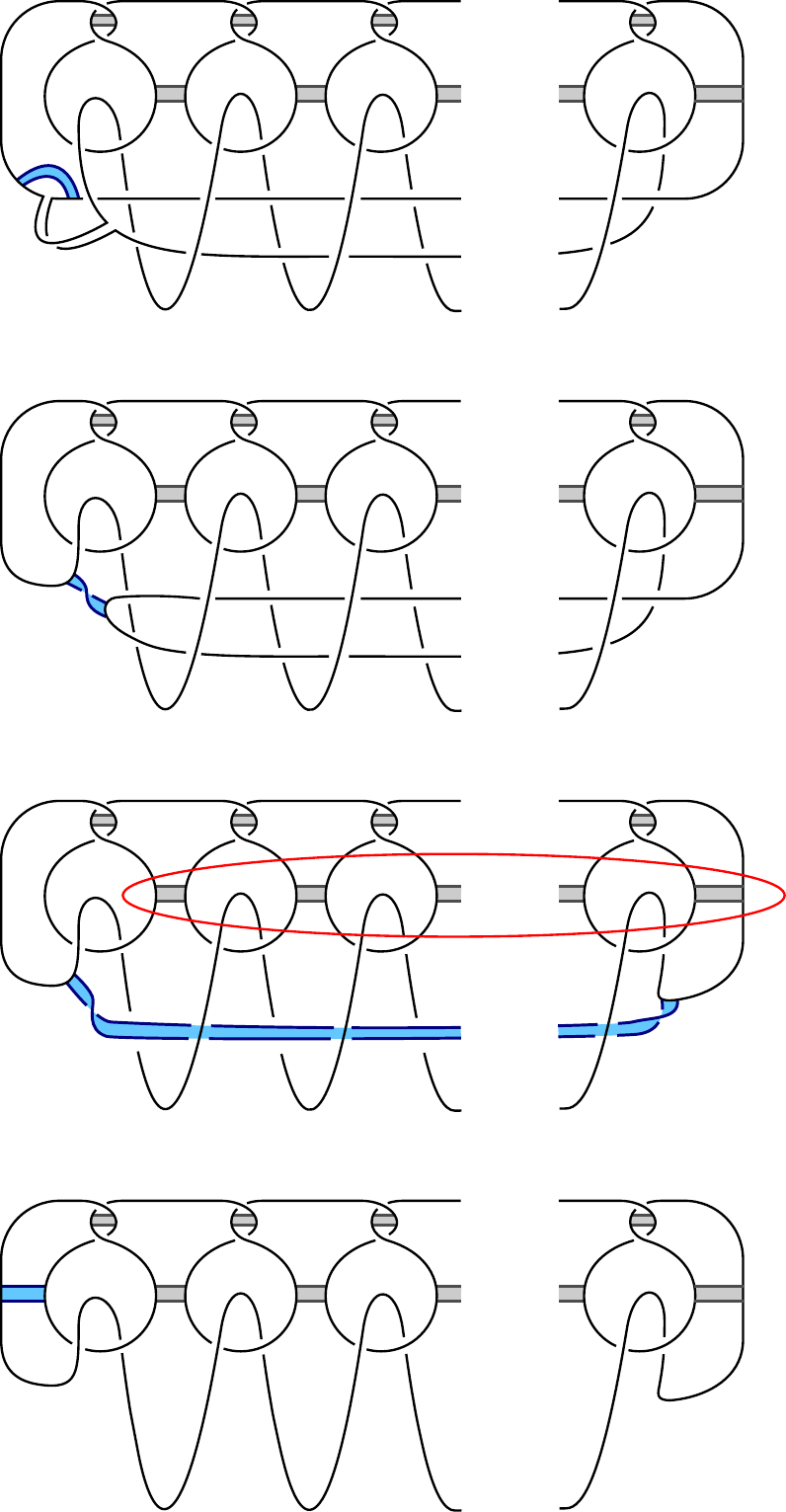}
\caption{We draw the boundary of $\widetilde{R}^g$ with bands indicating index-1 critical points into $B^4$. From top to bottom, we isotope $\widetilde{R}^g$. To get from the third to the fourth picture, we also slide the bottommost band over all of the circled bands. In the end, we obtain a positive genus $g$ Seifert surface (slightly pushed into $B^4$) for a connected sum of $g$ right-handed trefoil knots.} \label{fig:linkR_isotopy}
\end{figure}

We use $R^g$ to increase the genus of $\S_0$. For any integers $r,s$ with $r\ge0$ and $s\ge 1$, let $\S^{r,s}_0$ be obtained as follows (see Figure~\ref{fig:genusmn} (top)).

\begin{enumerate}[1.]
\item Start with a copy of $\S_0$.
\item If $r>0$, then band a copy of $R^r$ to $\S_0$. The bands are indicated in Figure~\ref{fig:genusmn} (top); note that the disk component of $R^r$ is banded to $S_2$ while the positive genus component of $R^r$ is banded to $S_1$.
\item If $s>1$, then band a copy of $R^{s-1}$ to $\S_0$. The bands are again indicated in Figure~\ref{fig:genusmn} (top); note that the disk component of $R^{s-1}$ is banded to $S_1$ while the positive genus component of $R^{s-1}$ is banded to $S_2$.
\item Call the result genus $(r,s)$ surface $\S^{r,s}_0$.
\end{enumerate}

\begin{figure}
\labellist
 \pinlabel $R^{s-1}$ at 500 535
 \pinlabel $R^{r}$ at 500 380
 \pinlabel \textcolor{red}{$\alpha$} at 15 70
 \pinlabel \textcolor{blue}{$b$} at 70 145
\endlabellist
\includegraphics[width=115mm]{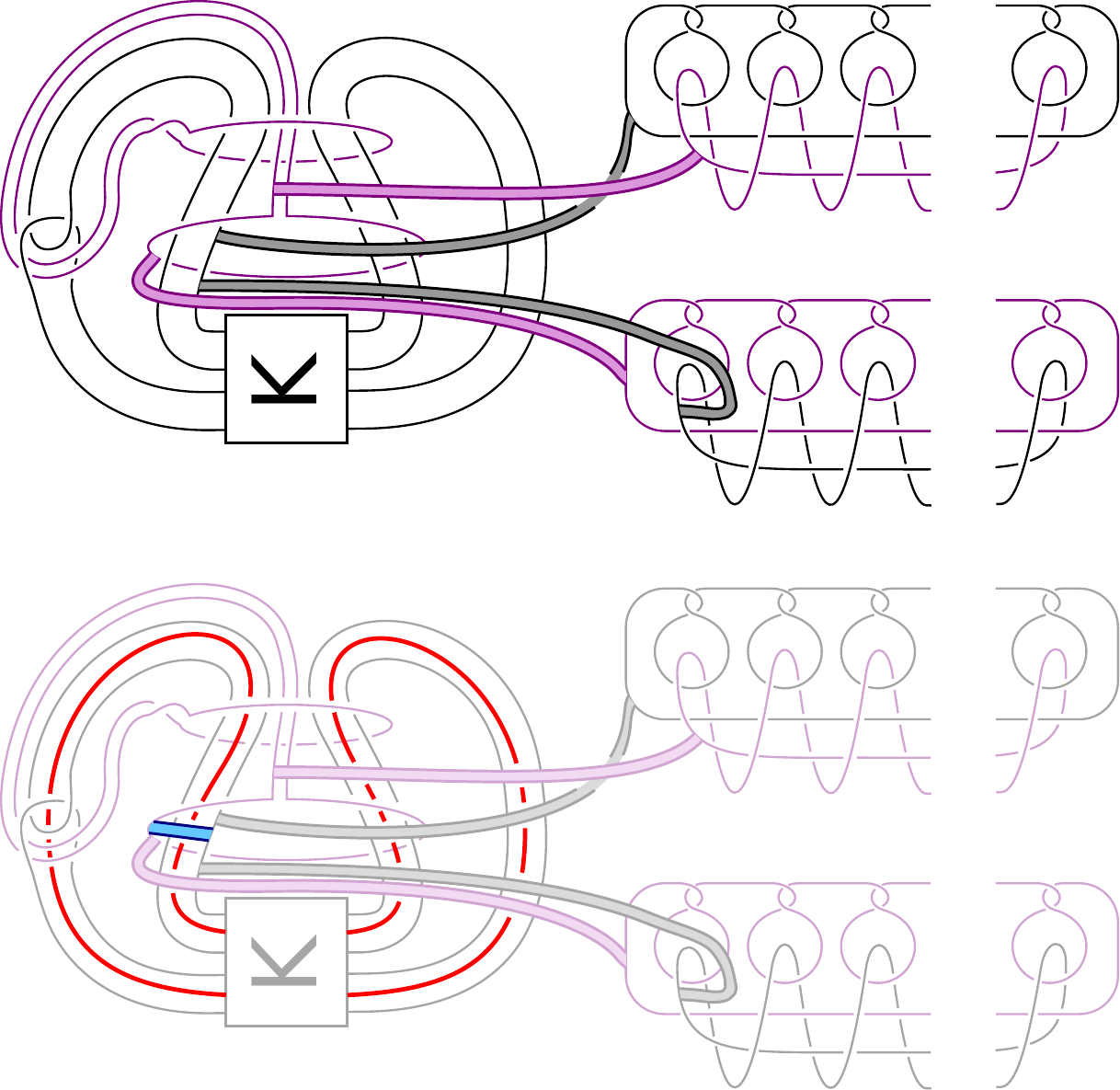}
\caption{Top: The 2--component surface link $\S^{r,s}_0$ (here we draw as if $r>0$ and  $s>1$). Bottom: a band $b$ between the components of $\S^{r,s}_0$ and a curve $\alpha$ lying on $\S^{r,s}_0$.} \label{fig:genusmn}
\end{figure}

In Figure~\ref{fig:genusmn} (bottom), we illustrate a certain band $b$ between the two components of $\S^{r,s}_0$ and a curve $\alpha$ (the same as in Theorem~\ref{thm:intro-surface-links}) that lies on $\S^{r,s}_0$. Just as in Proposition~\ref{prop:getdisks}, we see that $\alpha$ bounds a smooth framed disk into the complement of the component on which it lies, and a locally flat framed disk into the complement of $\S^{r,s}_0$. The following proposition illustrates why we chose the specific bands in Figure~\ref{fig:genusmn} (top).

\begin{proposition}\label{prop:genussqp}
The surface obtained by gluing the components of $\S^{r,s}_0$ together along $b$ is isotopic to a quasipositive Seifert surface $($pushed slightly into $B^4)$.
\end{proposition}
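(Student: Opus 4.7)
The plan is to exhibit the banded surface $\widetilde{\Sigma}^{r,s}_0 := \Sigma^{r,s}_0 \cup b$ as a boundary connected sum of the two strongly quasipositive surfaces provided by Propositions~\ref{prop:bandgetstd} and~\ref{prop:gettrefoil}, and then invoke Rudolph's theorem~\cite{rudolphsqp} (more precisely the easy boundary connected sum case thereof) to conclude that $\widetilde{\Sigma}^{r,s}_0$ is isotopic to a quasipositive Seifert surface.

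First I would unpack the combinatorics of $\widetilde{\Sigma}^{r,s}_0$. It is obtained from the six disjoint pieces $S_1, S_2, R^r_1, R^r_2, R^{s-1}_1, R^{s-1}_2$ by attaching five bands: one from $R^r_1$ to $S_2$, one from $R^r_2$ to $S_1$, one from $R^{s-1}_1$ to $S_1$, one from $R^{s-1}_2$ to $S_2$, together with the band $b$ from $S_1$ to $S_2$. I would then perform a sequence of band slides over $b$ so that (i) the two bands attaching the components of $R^r$ to $\Sigma_0$ are rearranged into the interior band $b^r_R$ from Proposition~\ref{prop:gettrefoil} together with a single band connecting $\widetilde{R}^r$ to $S_1 \cup b \cup S_2$ along their boundaries; and (ii) analogously the two bands attaching $R^{s-1}$ are rearranged into $b^{s-1}_R$ plus a single boundary connected sum band attaching $\widetilde{R}^{s-1}$ to $S_1 \cup b \cup S_2$. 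The specific placement of the four attaching bands in Figure~\ref{fig:genusmn}~(top) (each pair connecting opposite components of $R^r$ or $R^{s-1}$ to opposite components of $\Sigma_0$) is precisely what allows these slides to succeed: each pair of bands becomes parallel across $b$, so one slide per pair suffices. After these slides, the band $b$ plays the role of the band from Proposition~\ref{prop:bandgetstd}, and $S_1 \cup b \cup S_2$ is isotopic to the strongly quasipositive surface $S$.

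With this rearrangement, $\widetilde{\Sigma}^{r,s}_0$ is isotopic to the boundary connected sum $\widetilde{R}^r \,\natural\, S \,\natural\, \widetilde{R}^{s-1}$. By Proposition~\ref{prop:bandgetstd}, $S$ is isotopic to the standard genus one Seifert surface for $\Wh_+(\Wh_+(K))$; since $K$ is strongly quasipositive, so is this surface~\cite{rudolphsqp}. By Proposition~\ref{prop:gettrefoil}, each $\widetilde{R}^g$ is isotopic to a positive, hence strongly quasipositive, Seifert surface for a connected sum of $g$ right-handed trefoils. Because boundary connected sum of (strongly) quasipositive surfaces is (strongly) quasipositive---one simply takes the disjoint union of the positively braided surfaces and joins them by a trivial band---the resulting surface is quasipositive as desired.

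The main obstacle is the diagrammatic verification of steps (i)--(ii), namely that with the specific bands from the definition of $\Sigma^{r,s}_0$ the addition of $b$ really does allow one to reach the claimed boundary connected sum configuration by a sequence of handle slides. This is essentially a picture manipulation in the spirit of Figure~\ref{fig:infinitefamilyisotopy}, and should be checkable by tracking the bands explicitly through the appropriate diagrams; the fact that $R^r_1$ and $R^{s-1}_1$ are unknotted disks means the corresponding ``half'' of each boundary connected sum is essentially trivial, which keeps the bookkeeping manageable.
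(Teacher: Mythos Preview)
Your proposal is correct and follows essentially the same approach as the paper: the paper performs band slides (carried out in Figure~\ref{fig:genusmn_isotopy}) to exhibit $\Sigma^{r,s}_0\cup b$ as $\Sigma_0\cup b$ trivially boundary-summed with $R^r\cup b^r_R$ and $R^{s-1}\cup b^{s-1}_R$, then applies Propositions~\ref{prop:bandgetstd} and~\ref{prop:gettrefoil} to identify the result as a quasipositive Seifert surface for $\Wh_+(\Wh_+(K))\#(\#_{r+s-1}\text{ right-handed trefoils})$. Your acknowledgment that the diagrammatic verification is the main work is apt, as the paper's argument is essentially that figure.
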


\begin{proof}
In Figure~\ref{fig:genusmn_isotopy}, we isotope the result of gluing $\S^{r,s}_0$ along $b$ to obtain a copy of $\S_0\cup b$ trivially boundary-summed with $R^r\cup b^r_R$ and $R^{s-1}\cup b^{s-1}_R$. (If $r=0$ or $s=1$ then ignore the corresponding $R$.) From Propositions~\ref{prop:bandgetstd} and~\ref{prop:gettrefoil}, we find that this is a quasipositive Seifert surface (pushed into $B^4$) for $\Wh_+(\Wh_+(K))\#(\#_{r+s-1}($right-handed trefoils$))$.
\end{proof}

\begin{figure}
\includegraphics[width=110mm]{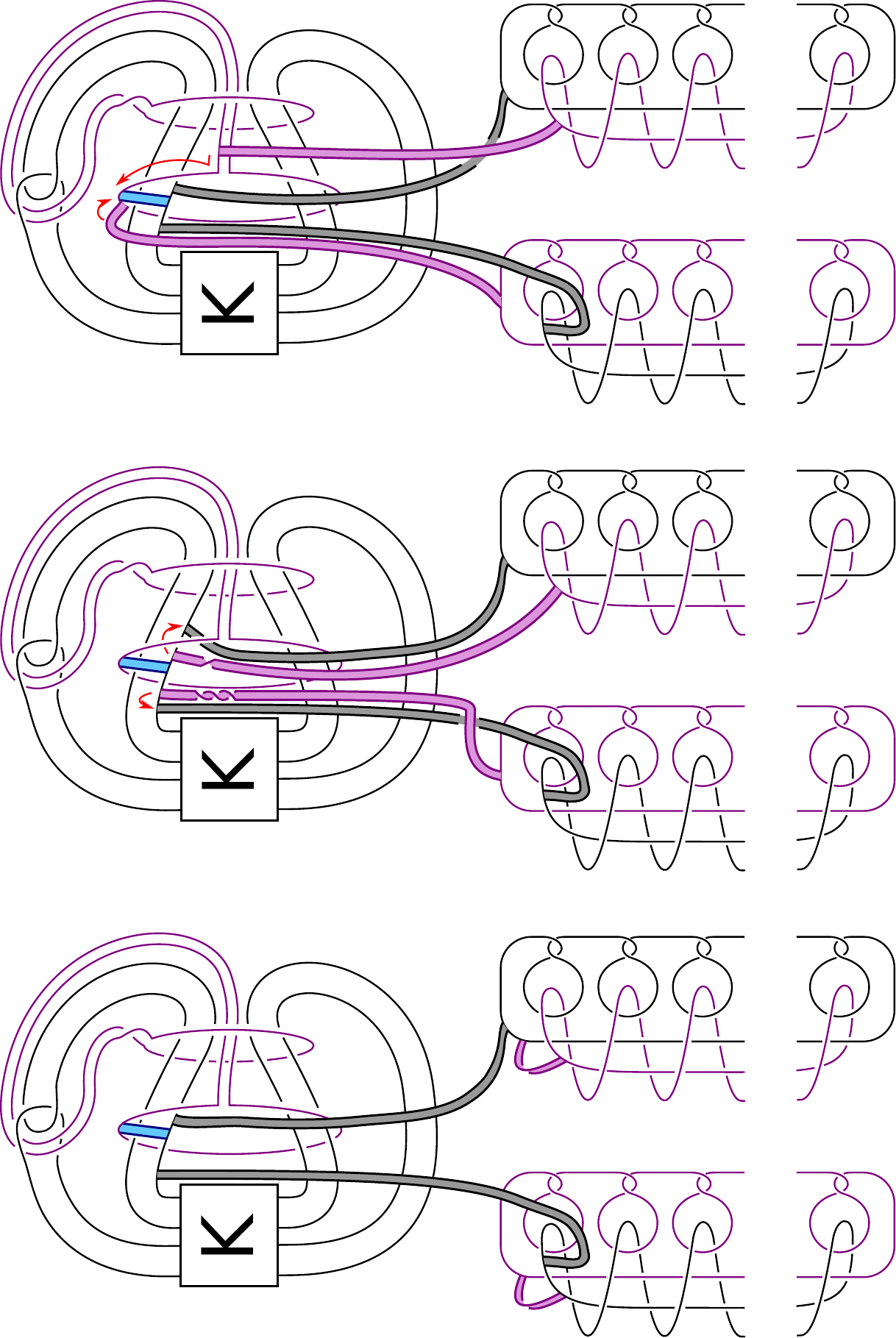}
\caption{We glue the components of $\S^{r,s}_0$ together along the band $b$. From top to bottom, we perform isotopy via band slides. We draw arrows to preemptively indicate band slides. Note at the end the resemblance to Figures~\ref{fig:infinitefamilyisotopy} and~\ref{fig:linkR_isotopy}.} \label{fig:genusmn_isotopy}
\end{figure}

Theorem~\ref{thm:increasing-the-genus} follows from the next statement.

\begin{theorem}\label{thm:increasing-the-genus-main-body}
Fix integers $r,s$ with $r\ge 0$ and $s\ge 1$. For each $n\in\N$, let $J_n$ be a knot in $S^3$ whose Alexander polynomial has $n$ irreducible factors counted with multiplicity. Let $\S^{r,s}_n=\S^{r,s}_0(\alpha;J_n,1)$. Then for all $m>n\ge 0$, we have that:
\begin{itemize}
\item $\S^{r,s}_n$ is a Brunnian genus $(r,s)$ link,
\item $\S^{r,s}_n$ and $\S^{r,s}_m$ are topologically isotopic rel.\ boundary, and
\item $(B^4,\S^{r,s}_n)$ is not diffeomorphic to $(B^4,\S^{r,s}_m)$.
\end{itemize}
\end{theorem}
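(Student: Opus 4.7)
The plan is to mirror the proof of Theorem~\ref{thm:infinitefamily}, with $\S^{r,s}_0$ playing the role of $\S_0$ and with the quasipositive Seifert surface supplied by Proposition~\ref{prop:genussqp} replacing the standard genus one Seifert surface for $\Wh_+(\Wh_+(K))$. The three bullets are handled in order, and the two foundational geometric inputs—that $C$ still bounds a topologically locally flat framed disk into the complement of $\S^{r,s}_0$ and that banding $\S^{r,s}_0$ along $b$ yields a quasipositive surface—have essentially already been set up in Propositions~\ref{prop:getdisks} and~\ref{prop:genussqp}.

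For the first bullet, I would first check that $\S^{r,s}_0$ itself is Brunnian. Removing either component of $\S^{r,s}_0$ also removes the endpoints of all attached bands, so what remains is a connected surface obtained by banding together exactly one component from each of $\S_0$, $R^r$, and $R^{s-1}$, namely either $S_2$ together with the disk of $R^r$ and the genus-$(s-1)$ part of $R^{s-1}$, or $S_1$ together with the genus-$r$ part of $R^r$ and the disk of $R^{s-1}$. Since $\S_0$, $R^r$, and $R^{s-1}$ are each Brunnian, every constituent surface is smoothly unknotted in isolation; the connecting bands can be pushed into disjoint collars so that the assembly is a boundary-connected sum of unknotted surfaces, hence itself unknotted of the correct total genus. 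To transfer the Brunnian property to each $\S^{r,s}_n$, I would construct, exactly as in Proposition~\ref{prop:getdisks}, a framed locally flat disk bounded by $C$ in the complement of $\S^{r,s}_0$ using two locally flat disks for the topologically slice knot $K$, and invoke Lemma~\ref{lemma:when-rim-surgery-doesnt-change-anything} to conclude that $\S^{r,s}_n$ is topologically ambiently isotopic rel.\ boundary to $\S^{r,s}_0$. The second bullet is then immediate, since both $\S^{r,s}_n$ and $\S^{r,s}_m$ are topologically isotopic rel.\ boundary to $\S^{r,s}_0$, hence to each other.

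For the third bullet I would argue via the $\Omega$-invariant exactly as in Theorem~\ref{thm:infinitefamily}. By Proposition~\ref{prop:genussqp}, banding $\S^{r,s}_0$ along the distinguished band $b$ of Figure~\ref{fig:genusmn} yields, up to smooth isotopy rel.\ boundary, a quasipositive Seifert surface $S^{r,s}$ with connected boundary. By Remark~\ref{rem:usefulprop}~\eqref{prop2}, $F_{B^4, S^{r,s}}$ is nonvanishing; combined with the composition law of Remark~\ref{rem:usefulprop}~\eqref{prop1}, this forces $F_{B^4, \S^{r,s}_0}$ to be nonvanishing as well. Applying Lemma~\ref{lem:effectofrim} to the presentation $\S^{r,s}_n \cup b$, which is isotopic to $S^{r,s}(C; J_n, 1)$, yields $\Omega(\S^{r,s}_n \cup b) = n$. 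Now assume for contradiction that there exists a diffeomorphism $f \colon (B^4, \S^{r,s}_n) \to (B^4, \S^{r,s}_m)$; since $f$ need not restrict to the identity on $S^3$, it only provides an equivalence $(B^4, \S^{r,s}_n \cup b) \to (B^4, \S^{r,s}_m \cup f(b))$, where $f(b)$ is some band attached to $\S^{r,s}_m$. Because $f(b) \cup \S^{r,s}_m$ is $1$-twist rim surgery on $f(b) \cup \S^{r,s}_0$ along $C$ using $J_m$, Lemma~\ref{lem:effectofrim} forces $\Omega(f(b) \cup \S^{r,s}_m)$ to be either $-\infty$ (if $F_{B^4, f(b) \cup \S^{r,s}_0}$ vanishes) or at least $m$. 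Since $n < m$, neither value equals $n$, and this contradiction rules out the existence of $f$.

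The main obstacle I anticipate is the careful verification of Proposition~\ref{prop:genussqp}, which is precisely what pins down the specific bands used in the construction of $\S^{r,s}_0$: they were chosen so that after banding along $b$ and performing an explicit sequence of isotopies and band slides, one obtains a boundary-connected sum of the surfaces of Propositions~\ref{prop:bandgetstd} and~\ref{prop:gettrefoil}, hence a quasipositive Seifert surface. Once this geometric reduction is in hand, the link Floer argument runs in complete parallel with Theorem~\ref{thm:infinitefamily}, and the Brunnian and topological-isotopy bullets reduce to the same two ingredients used there.
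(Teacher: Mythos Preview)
Your argument for the second and third bullets matches the paper's proof essentially verbatim, including the use of Proposition~\ref{prop:genussqp}, Remark~\ref{rem:usefulprop}, Lemma~\ref{lem:effectofrim}, and the $\Omega$-invariant contradiction via an unknown band $f(b)$.

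There is, however, a genuine gap in your treatment of the first bullet. You propose to ``transfer the Brunnian property to each $\S^{r,s}_n$'' by producing the \emph{locally flat} framed disk $\Delta'$ for $C$ in the complement of $\S^{r,s}_0$ and invoking Lemma~\ref{lemma:when-rim-surgery-doesnt-change-anything} to conclude that $\S^{r,s}_n$ is \emph{topologically} isotopic rel.\ boundary to $\S^{r,s}_0$. But Brunnian is a \emph{smooth} condition: one must show that each proper sublink of $\S^{r,s}_n$ is smoothly unlinked. Topological isotopy of the full link to a Brunnian link does not imply this; indeed, the whole point of the theorem is that $\S^{r,s}_n$ is topologically isotopic to $\S^{r,s}_0$ yet smoothly inequivalent to it.

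The fix is exactly what the paper does (and what Proposition~\ref{prop:getdisks} already supplies): use the \emph{smooth} framed disk $\Delta$ that $C$ bounds in the complement of the single component on which it lies. Then Lemma~\ref{lemma:when-rim-surgery-doesnt-change-anything} in $\CAT=\DIFF$ shows that the rim-surgered component is smoothly isotopic rel.\ boundary to the original component, which is unknotted since $\S^{r,s}_0$ is Brunnian. Removing the component containing $C$ leaves the other component untouched by the rim surgery, and it too is unknotted. The locally flat disk $\Delta'$ is used only for the second bullet. Your extra work checking that $\S^{r,s}_0$ itself is Brunnian is a reasonable addition that the paper leaves implicit.
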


\begin{remark}
As in Remark~\ref{remark:distinct-alex-polys}, if we only have that $\Delta_{J_n} \neq \Delta_{J_m}$ up to a unit in $\mathbb{F}_2[t^{\pm 1}]$, then we may still deduce that $(B^4,\S^{r,s}_n)$ and $(B^4,\S^{r,s}_m)$ are not smoothly isotopic rel.\ boundary. 
\end{remark}

\begin{proof}
The curve $\alpha$ bounds a smooth disk into the complement of the component of $\S^{r,s}_0$ on which it lies.
%, it follows from Lemma~\ref{lemma:when-rim-surgery-doesnt-change-anything} that $\S^{r,s}_n$ is Brunnian. 
Additionally, $\S^{r,s}_0$ is Brunnian: the two components are trivial. We need to see this is true after the rim surgery. The rim surgery only changes one of the components, the one containing $\alpha$, so we only have to check that component is still trivial. Since $\alpha$ bounds a smooth disc in the complement of that component, Lemma 4.6 shows that component is still trivial.

Moreover, since $\alpha$ bounds a locally flat disk into the complement of $\S^{r,s}_0$, it follows also from Lemma~\ref{lemma:when-rim-surgery-doesnt-change-anything} that $\S^{r,s}_n$ is topologically isotopic rel.\ boundary to $\S^{r,s}_0$ for any $n$.

By Proposition~\ref{prop:genussqp}, $\S^{r,s}_0\cup b$ is isotopic to a strongly quasipositive Seifert surface pushed into $B^4$. It follows from Remark~\ref{rem:usefulprop}~\eqref{prop2} and Lemma~\ref{lem:effectofrim} that $\Omega(\S^{r,s}_n\cup b)=n$. As in Theorem~\ref{thm:intro-surface-links}, we note that if $\S^{r,s}_n$ and $\S^{r,s}_m$ are smoothly equivalent, then $\S^{r,s}_n\cup b$ is smoothly equivalent to $\S^{r,s}_m\cup b'$ for some band $b'$, so $\Omega(\S^{r,s}_m\cup b')=n$. But $(\S^{r,s}_m\cup b')$ is obtained from 1-twist rim surgery using knot $J_m$ on $(\S^{r,s}_0\cup b')$, so by Lemma~\ref{lem:effectofrim} we have
\[\Omega(\S^{r,s}_m\cup b')=-\infty\quad \text{or}\quad \Omega(\S^{r,s}_m\cup b')\ge m.\]
This yields a contradiction, so we conclude that $(B^4, \S^{r,s}_n)$ is not diffeomorphic to $(B^4, \S^{r,s}_m)$.
\end{proof}

\section{Surfaces with more than two components}\label{section:covering-maps}

In this section, we construct Brunnian exotic surface pairs with arbitrarily many components. To prove Theorem~\ref{thm:intro-any-no-components}, we take a pair of surface links from either Theorem~\ref{thm:intro-disk-links} or Theorem~\ref{thm:intro-surface-links}, and apply iterated Bing doubling to the first component, yielding exotic pairs.

This proof uses covering space methods, and was inspired by work of Cha-Kim \cite{Cha-Kim} on concordance of 1-links obtained from Bing doubling. We extend their covering link methods to the 4--dimensional setting.
The main obstacle is to show that there is no diffeomorphism of pairs relating Bing doubled surfaces, provided we start with a pair of surface links that admit no diffeomorphism of pairs.

Section~\ref{sec:covering} introduces the notion of covering surfaces, and investigates covering surfaces of Bing doubles.
Section~\ref{subsection-JSJ-trees} recalls some of the theory of JSJ decompositions of 3-manifolds, and applies it to Bing doubles and their covering links. Section~\ref{subsection:lifting-diffeomorphisms} uses this to show that, in a fairly general setting, Bing doubling a pair of exotic surface links yields another pair of exotic surface links.

\subsection{Coverings and Bing doubles}\label{sec:covering}

\begin{lemma}\label{lemma:covering-surface-link-1}
  Let $\Sigma = \Sigma_0 \sqcup \S_1 \sqcup \cdots \sqcup \S_n$ be a surface link in $B^4$, and suppose that $\S_0$ is an unknotted disk. Fix $k \geq 1$.  Suppose that $H_1(\Sigma\sm \Sigma_0;\Z) \to H_1(B^4 \sm \Sigma_0;\Z) \cong \Z$ has image in $k\Z$.  Let $p \colon B^4 \to B^4$ be a $k$-fold branched covering map with branching set $\S_0$.
  Then $p^{-1}(\S_1 \sqcup \cdots \sqcup \S_n)$ is a $kn$--component surface link.
\end{lemma}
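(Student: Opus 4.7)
First I would check that the $k$-fold cyclic branched cover $p \colon B^4 \to B^4$ is well defined and exists. Since $\Sigma_0$ is an unknotted properly embedded disk in $B^4$, its exterior is a copy of $S^1 \times D^3$, so $\pi_1(B^4 \smallsetminus \Sigma_0) \cong H_1(B^4 \smallsetminus \Sigma_0) \cong \Z$, generated by a meridian of $\Sigma_0$. The $k$-fold connected cyclic cover of the exterior exists, and filling in with the appropriate $D^2$-bundle recovers a branched cover $p \colon B^4 \to B^4$.

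Next I would count components of $p^{-1}(\Sigma_i)$ for each fixed $i \in \{1,\ldots,n\}$. Since $\Sigma_i \cap \Sigma_0 = \emptyset$, the restriction of $p$ over $\Sigma_i$ is an unbranched $k$-sheeted covering. By standard covering space theory, the number of connected components of $p^{-1}(\Sigma_i)$ equals the index $[\Z : k\Z + \mathrm{im}(\pi_1(\Sigma_i) \to \Z)]$. The inclusion $\Sigma_i \hookrightarrow \Sigma \smallsetminus \Sigma_0$ factors this map through $H_1(\Sigma \smallsetminus \Sigma_0;\Z) \to H_1(B^4 \smallsetminus \Sigma_0;\Z) \cong \Z$, whose image lies in $k\Z$ by hypothesis. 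Therefore $\mathrm{im}(\pi_1(\Sigma_i) \to \Z) \subseteq k\Z$, so the denominator is exactly $k\Z$, giving precisely $k$ components in $p^{-1}(\Sigma_i)$.

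Since a $k$-sheeted covering splitting into $k$ components must restrict to a homeomorphism on each component, each component of $p^{-1}(\Sigma_i)$ is homeomorphic to $\Sigma_i$; in particular it is a properly embedded, connected, smooth surface in $B^4$ with exactly one boundary circle in $S^3$ (the corresponding boundary circle being one of the $k$ components of $p^{-1}(\partial \Sigma_i)$ in the $k$-fold cyclic branched cover of $S^3$ over the unknot $\partial \Sigma_0$, which is again $S^3$). Summing over $i = 1,\ldots,n$ yields $kn$ components in total, and by construction they are pairwise disjoint as the $\Sigma_i$ are pairwise disjoint and the map $p$ is a cover away from $\Sigma_0$. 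This establishes that $p^{-1}(\Sigma_1 \sqcup \cdots \sqcup \Sigma_n)$ is a $kn$-component surface link in $B^4$.

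The only step requiring any care is the component count, which reduces to a short computation with the single homological hypothesis; the rest is bookkeeping for branched covers of $B^4$ over an unknotted disk, which behave essentially like the cyclic covers of $S^3$ branched over an unknot.
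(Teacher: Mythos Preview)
Your proof is correct and follows the same approach as the paper's proof, which is extremely terse (a single sentence asserting that the homological condition guarantees each $\Sigma_i$ lifts to $k$ components). You have simply filled in the covering-space details that the paper leaves implicit.
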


\begin{proof}
  The homological condition guarantees that each component of $\S_1 \sqcup \cdots \sqcup \S_n$ lifts to a $k$ component surface link.
\end{proof}

\begin{definition}\label{defn:covering-surface-link}
  Let $\Sigma$ be a surface link. A \emph{covering surface link} of $\Sigma$ is a surface link obtained from $\S$ by a finite sequence of the following two operations.
  \begin{enumerate}
    \item Taking the pre-image of $\Sigma$ under a branched covering map $p$, as in Lemma~\ref{lemma:covering-surface-link-1}, with branching set an unknotted disk, and forgetting the branching set.
    \item Passing to a sublink.
  \end{enumerate}
\end{definition}

The following lemma is key to our use of covering surfaces in arguments by contradiction. It states that there is a covering surface of a Bing double isotopic to the original surface link.
%MP: It doesn't just illustrate it, it's the key lemma. 
%the key example for our use of covering surfaces. 
%To parse Lemma~\ref{lemma:covering-link-of-BD-is-original-link}, 
Recall that $\BD(\S)$ is defined to be $\BD(\S_1)\sqcup\S_2\sqcup\cdots\sqcup\Sigma_n$. The ``first" component of $\BD(\S)$, denoted $BD(\Sigma)_1$, is one of the two components of $\BD(\S_1)$. By symmetry of the Bing doubling operation, these two components can be arbitrarily ordered.

%Let $\S = \Sigma_1 \sqcup \cdots \sqcup \Sigma_n$ be a surface link with $\Sigma_1$ a trivial disk. Next, we consider the $(n+1)$--component link $\BD(\S)$, and take the 2-fold branched cover of $B^4$ with branching set $\BD(\S)_1$ (recall that $BD(\Sigma)_1$ denotes the first component of $BD(\Sigma)$). 

%Take the 2-fold branched cover of $B^4$ with branching set the first component of $\BD(\S)$ \SK{(an $(n+1)$--component link)}, and consider the preimage $S:= p^{-1}(\BD(\S) \sm \BD(\S)_1)$ of $\BD(\S) \sm \BD(\S)_1$ under the branched  covering map $p \colon B^4 \to B^4$ (recall that $BD(\Sigma)_1$ denotes the first component of $BD(\Sigma)$). This is a $2n$--component surface link in $B^4$.  

\begin{lemma}\label{lemma:covering-link-of-BD-is-original-link}
Let $\Sigma = \Sigma_1 \sqcup \cdots \sqcup \Sigma_n$ be an $n$--component surface link, where $n \geq 2$. If $\Sigma_1$ is a trivial disk, then $\Sigma$ can be realized as a covering surface of $BD(\Sigma)$.

%Let $\S = \Sigma_1 \sqcup \cdots \sqcup \Sigma_n$ be an $n$--component surface link, where $n \geq 3$ and $\Sigma_1$ a trivial disk. Consider the $(n+1)$--component link $\BD(\S)$, and take the 2-fold branched cover of $B^4$ with branching set $\BD(\S)_1$. Let $S$ denote the $2n$--component surface link in $B^4$ obtained as the preimage of $\BD(\S) \sm \BD(\S)_1$ under the branched  covering map $p \colon B^4 \to B^4$, i.e. $S = p^{-1}(\BD(\S) \sm \BD(\S)_1)$. There is an $n$--component sublink of $S$ smoothly isotopic to $\S$, that consists of precisely one lift of each component of $\BD(\S) \sm \BD(\S)_1$. \SK{In particular, $\S$ can be realized as a covering surface of $BD(\S)$.}
\end{lemma}

\begin{proof}
We begin by setting some notation. Fix a neighborhood $N \cong D^2 \! \times \! D^2$  of $\Sigma_1$ where $\Sigma_1$ is identified with $D^2 \! \times \! \{0\}$. Denote $\BD(\Sigma)$ by $\Sigma'$; its components are labeled so that $\Sigma_1'$ and $\Sigma_2'$ form $\BD(\Sigma_1)$ and so that  $\Sigma_i'=\Sigma_{i-1}$ for $i \geq 3$. Note that $\Sigma_1'$ and $\Sigma_2'$ lie inside $N$, whereas $\Sigma_i'$ lies in $B^4 \setminus N$ for $i \geq 3$.  Let $U$ denote the solid torus $(\partial D^2) \! \times \! D^2 = (\partial N) \cap S^3$; this solid torus is identified with  $ \nu(\partial \Sigma_1)$ in $S^3$ and contains $\partial \Sigma_1'$ and $\partial \Sigma_2'$.

%By Lemma~\ref{lem:view}, we may assume that $\Sigma_1'$ and $\Sigma_2'$ are arranged as in Figure~\ref{fig:local}; here $\Sigma_2'$ consists of two push-offs of $\Sigma_1$ joined by a band in $U$ and  $\Sigma_1'$ is obtained as the push-in of a Seifert disk $D \subset U$ for the unknot $\partial \Sigma_1'$.

The disk $\Sigma_2'$ consists of two push-offs of $\Sigma_1$ joined by a band in $U$. By Lemma~\ref{lem:view}, we may isotope $\Sigma_1'$ so that it is the push-in of a Seifert disk $D$ for the unknot $\partial \Sigma_1'$ as in Figure~\ref{fig:local}. Note that $D$ and $\Sigma_2'$ meet in a single ribbon intersection which occurs in $U$, as shown.

%By Lemma~\ref{lem:view}, we may assume that $\Sigma_2'$ consists of two push-offs of $\Sigma_1$ joined by a band in $U$ and that $\Sigma_1'$ is obtained as the push-in of a Seifert disk $D \subset U$ for the unknot $\partial \Sigma_1'$; see Figure~\ref{fig:local}. Here $D$ has a single ribbon intersection with $\Sigma_2'$, which occurs in $U$ as shown.

 \begin{figure}[h] \center
  \def\svgwidth{.35\linewidth}%% Creator: Inkscape 1.0.1 (c497b03c, 2020-09-10), www.inkscape.org
%% PDF/EPS/PS + LaTeX output extension by Johan Engelen, 2010
%% Accompanies image file '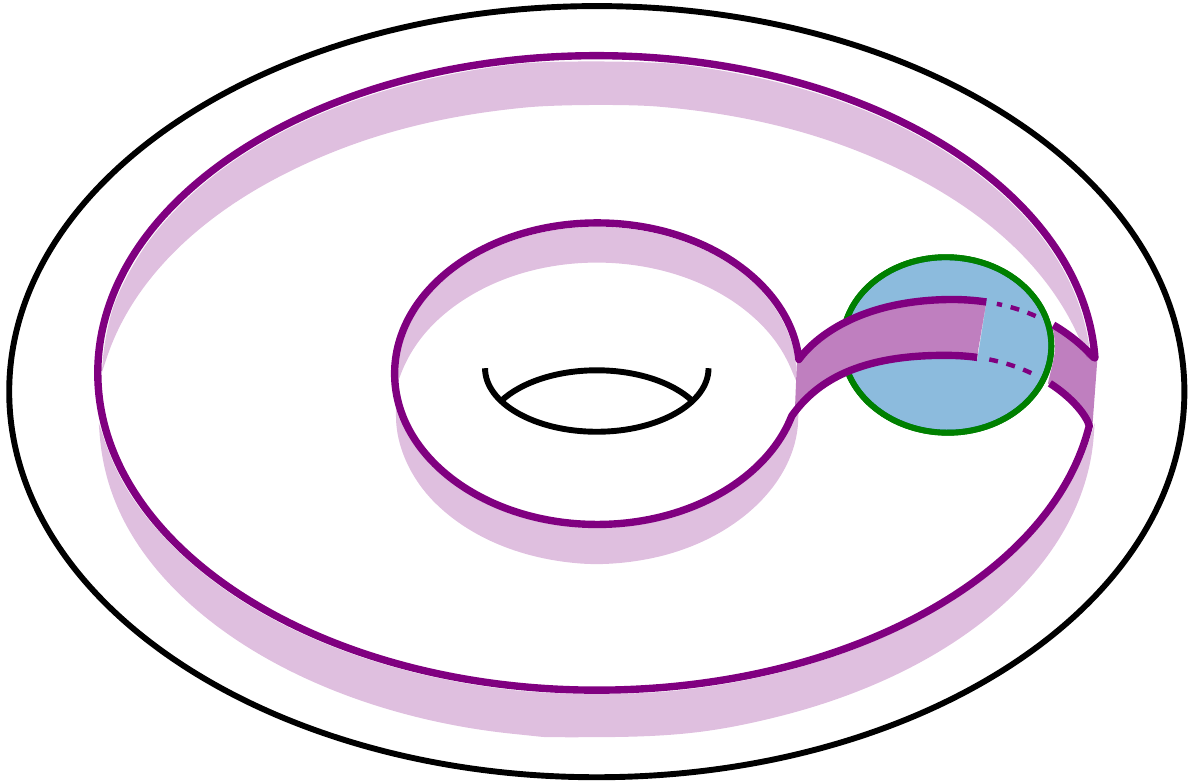' (pdf, eps, ps)
%%
%% To include the image in your LaTeX document, write
%%   \input{<filename>.pdf_tex}
%%  instead of
%%   \includegraphics{<filename>.pdf}
%% To scale the image, write
%%   \def\svgwidth{<desired width>}
%%   \input{<filename>.pdf_tex}
%%  instead of
%%   \includegraphics[width=<desired width>]{<filename>.pdf}
%%
%% Images with a different path to the parent latex file can
%% be accessed with the `import' package (which may need to be
%% installed) using
%%   \usepackage{import}
%% in the preamble, and then including the image with
%%   \import{<path to file>}{<filename>.pdf_tex}
%% Alternatively, one can specify
%%   \graphicspath{{<path to file>/}}
%% 
%% For more information, please see info/svg-inkscape on CTAN:
%%   http://tug.ctan.org/tex-archive/info/svg-inkscape
%%
\begingroup%
  \makeatletter%
  \providecommand\color[2][]{%
    \errmessage{(Inkscape) Color is used for the text in Inkscape, but the package 'color.sty' is not loaded}%
    \renewcommand\color[2][]{}%
  }%
  \providecommand\transparent[1]{%
    \errmessage{(Inkscape) Transparency is used (non-zero) for the text in Inkscape, but the package 'transparent.sty' is not loaded}%
    \renewcommand\transparent[1]{}%
  }%
  \providecommand\rotatebox[2]{#2}%
  \newcommand*\fsize{\dimexpr\f@size pt\relax}%
  \newcommand*\lineheight[1]{\fontsize{\fsize}{#1\fsize}\selectfont}%
  \ifx\svgwidth\undefined%
    \setlength{\unitlength}{573.07953121bp}%
    \ifx\svgscale\undefined%
      \relax%
    \else%
      \setlength{\unitlength}{\unitlength * \real{\svgscale}}%
    \fi%
  \else%
    \setlength{\unitlength}{\svgwidth}%
  \fi%
  \global\let\svgwidth\undefined%
  \global\let\svgscale\undefined%
  \makeatother%
  \begin{picture}(1,0.65580043)%
    \lineheight{1}%
    \setlength\tabcolsep{0pt}%
    \put(0,0){\includegraphics[width=\unitlength,page=1]{bing-double-ball-mod.pdf}}%
    \put(0.14898436,0.38106428){\color[rgb]{0.50196078,0,0.50196078}\makebox(0,0)[lt]{\lineheight{1.25}\smash{\begin{tabular}[t]{l}$\Sigma_2'$\end{tabular}}}}%
    \put(0.75446046,0.23806304){\color[rgb]{0,0.27843137,0.70196078}\makebox(0,0)[lt]{\lineheight{1.25}\smash{\begin{tabular}[t]{l}$D$\end{tabular}}}}%
  \end{picture}%
\endgroup%

\caption{The solid torus $U = \partial N \cap S^3$.  The disk $\Sigma_1'$ is the push-in of the Seifert disk $D \subseteq U$ shown in the figure.  The disk $\Sigma_2'$ is formed from two push-offs of $\Sigma_1$ (represented by the light purple collars, which extend to disks out of view) joined by a band in $U$.}\label{fig:local}
\end{figure}
%\footnote{What is the light purple shading in this picture? Am I supposed to think of some of these figures as being in the interior of $B^4$, in the fourth dimension? e.g.\ $\Sigma_1'$ also lies in the 4-ball, yet appears in this 3d picture?  I can't quite tell what this is a picture of.  I think it's good if we have pictures that are somehow precise. But then it may be confusing if they are also partly schematics.}

Since $\Sigma_1'$ is an unknotted disk in $B^4$, the double cover of $B^4$ branched along $\Sigma_1'$ is $B^4$. However, to lift the remaining surface link components, we describe the branched cover more precisely. As shown in the proof of Lemma~\ref{lem:view}, the disks $\Sigma_1'$ and $D$ cobound a 3-ball $\Delta$ lying in $N$. Let $Q$ denote the modified 4-ball (with corners) obtained by cutting $B^4$ open along $\Delta$. The double branched cover of $\Sigma_1'$ is obtained by gluing together two copies of this space, $Q$ and $\alt{Q}$, as depicted schematically in Figure~\ref{fig:schematic}, along the part of the boundary $\Delta_- \cup \Delta_+$ obtained from cutting along $\Delta$.
Fix a diffeomorphism 
\[Q \cup_{\Delta_- \cup \Delta_+} \alt{Q} \cong B^4.\]
Note that the 3-ball $\Delta$ is disjoint from the surface link components $\Sigma_i'$ for $i \geq 3$. 
The inclusion $Q \to Q \cup_{\Delta_- \cup \Delta_+} \alt{Q}$ therefore induces an embedding of  $\bigcup_{i \geq 3} \Sigma_i'$  into $B^4$. 

%\footnote{MP: here's maybe a more straightforward and precise way to talk about the same idea. I introduced the notation $\Delta_{\pm}$, it's probably not essential though.}
%
%It follows that these components lift trivially\footnote{What does lifting trivially mean? These components can be nontrivially knotted, both before and after the lift. -- Normally it would mean that the covering map restricted to their preimages (is preimage a word?) is trivial, does not contradict the possibility of being knotted. It's true, though, that we are using something more: that the pair $(B^4\backslash \Delta, \text{remaining components})$ lifts trivially. \KH{Yeah ``trivially'' was a punt here. ``Identically''? Or what about ``It follows that these components lift in the obvious way to copies of themselves lying entirely in each of $Q$ and $Q'$.''} Alright, we are talking about two copies of the same space, and so each has an embedding of the surfaces in it. So do we have to talk about lifting at all?  } into each of $Q$ and $Q'$. 
%
As we are only aiming to produce a covering surface for $\Sigma'$, and not reconstruct the entire lift, we discard the copies of the components $\Sigma_i'$ with $i \geq 3$ induced by the embedding  $\alt{Q} \to Q \cup_{\Delta_- \cup \Delta_+} \alt{Q}$, keeping only those in $Q$ (this is the only meaningful distinction between $Q$ and $\alt{Q}$).

 \begin{figure} \center
  \def\svgwidth{.9\linewidth}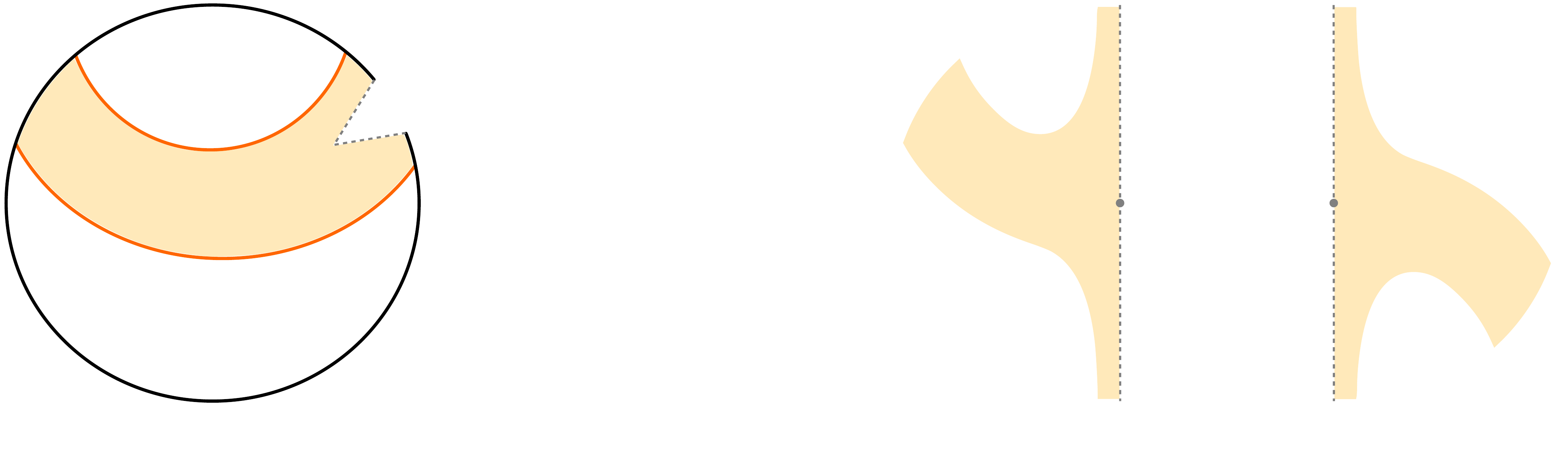
\caption{A schematic illustrating how to produce $Q$ and the double cover of $B^4$ branched over $\Sigma_1'$.
  {Left:} Obtaining $Q$ by slicing open $B^4$ along the 3-ball $\Delta$; the push-offs $\Delta_\pm$ of $\Delta$ lie in the boundary of $Q$.   {Right:} Gluing two copies $Q$ and $\alt{Q}$ to produce the double cover of $B^4$ branched over $\Sigma_1'$. The lifts of the surface components $\Sigma_3',\ldots,\Sigma_{n+1}'$ in $\alt{Q}$ are dashed because we discard them when constructing the covering surface link.}\label{fig:schematic}
\end{figure}

We still have to understand the lifts of the remaining component $\Sigma'_2$. To this end, observe that cutting along $\Delta$ separates $\Sigma'_2$  into two copies of the original disk $\Sigma_1$. The copies of these disks in $Q$ are glued to the copies in $\alt{Q}$ along the arcs where they meet the 3-ball $\Delta$; see the first row of Figure~\ref{fig:trivialize}. In $\alt{Q}$, where we have discarded the lifts of $\Sigma_i'$ for $i \geq 3$, we may isotope these copies of the trivial disk $\Sigma_1$ back into the boundary of $\alt{Q}$ while preserving the arcs where they are glued to the corresponding disks from $Q$; this is illustrated in the second row of Figure~\ref{fig:trivialize}. We may then pull the portions of the disks lying in $\alt{Q}$ back through the gluing region so that the lifts of $\Sigma_2'$ lie entirely in $Q$, as in the final row of Figure~\ref{fig:trivialize}. After this isotopy, these disks become parallel copies of $\Sigma_1$. Discarding one of these copies of $\Sigma_1$ and identifying $Q \cup \alt{Q}$ with $B^4$ as above, we see that the resulting covering link is smoothly equivalent to $\Sigma$, as desired.
\begin{figure} \center
\def\svgwidth{.8\linewidth}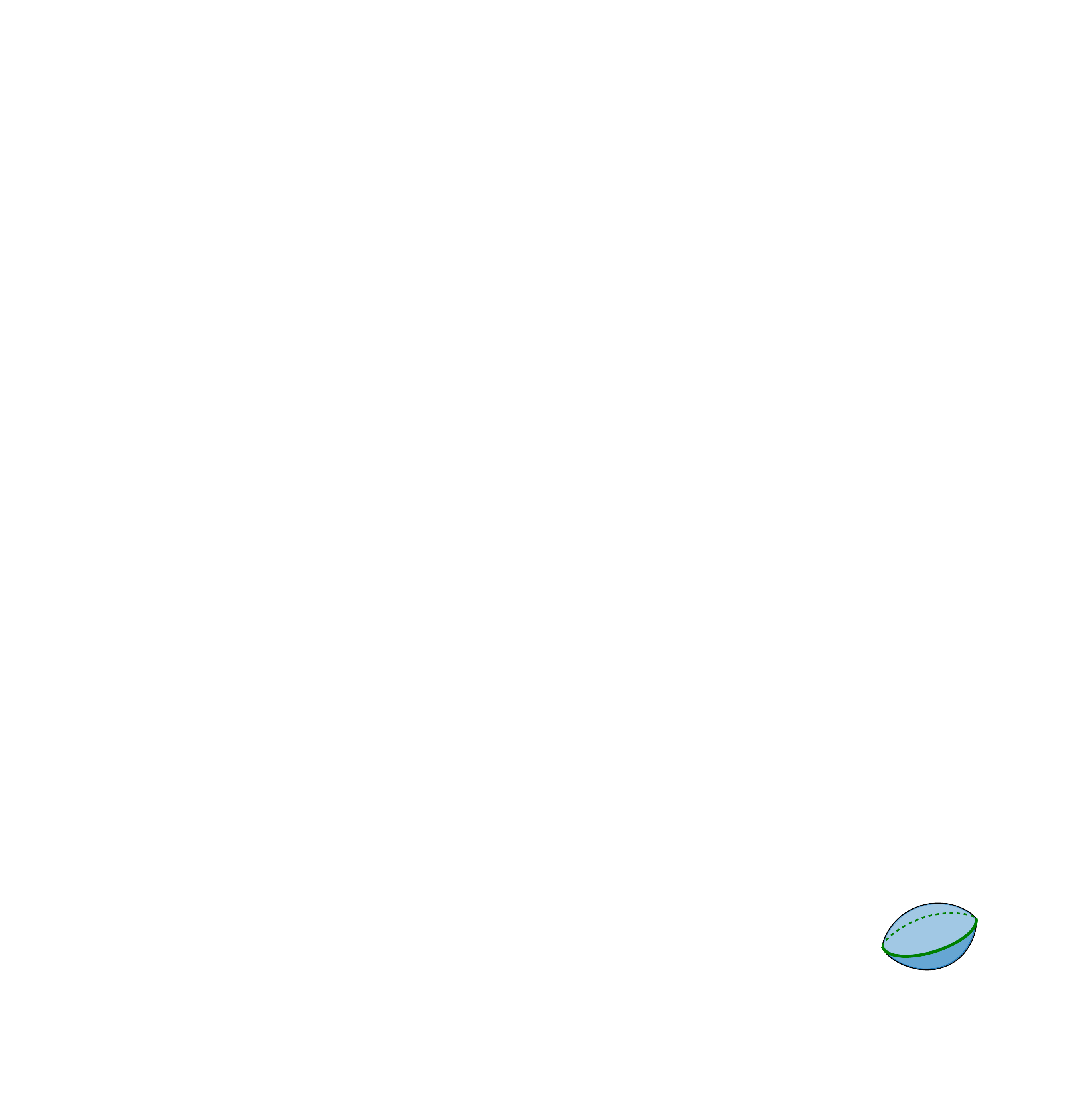 %\includegraphics[width=.8\linewidth]{isotopy.png}
\caption{The left and right sides depict $\partial Q \cap U$ and $\partial \alt{Q} \cap \alt{U}$, respectively. (Note that $\partial Q$ also contains the 3-ball $\Delta_+ \cup \Delta_-$, which is out of view because it lies on the other side of the 2-sphere $D_+ \cup D_-$; 
an analogous statement holds in $\partial \alt{Q}$.) % in $\partial Q$, the 2-sphere $D_+\cup D_-$ bounds , and an analogous statement holds in $\partial \alt{Q}$. 
The  torus is de-emphasized in $\partial \alt{Q} \cap \alt{U}$ because the lifts of $\Sigma'_3, \ldots, \Sigma'_{n+1}$ in $\alt{Q}$ have been deleted.  %(i.e.~the components $\alt{\Sigma_3}, \ldots, \alt{\Sigma_{n+1}}$) are deleted from $\alt{Q}$. 
  {Top row:} We see the lifts of $\Sigma_2'$ in $\partial Q$ and $\partial \alt{Q}$. %, as well as the 3-balls $\Delta$ and $\Delta'$.
  {Middle row:} After deleting the lifts of the components $\Sigma_3', \ldots, \Sigma_{n+1}'$ from $\alt{Q}$, the copies of $\Sigma_1$ in $\alt{Q}$ can be isotoped into the boundary. This isotopy requires that the copies of $\Sigma_1$ in $\alt{Q}$ temporarily exit the neighborhood $\alt{N}$ before they can be pulled into the boundary, so they are shown intersecting the dotted torus (which is the corner of $\partial \alt{N}$).   {Bottom row:} After pulling the portions of $\Sigma_1$ in $\alt{Q}$ through the gluing region, the lifts of $\Sigma_2'$ lie entirely in $Q$. }\label{fig:trivialize}
\end{figure}
\end{proof}

Our newfound understanding of coverings of Bing doubles allows us to study other knots and surfaces in the complement of $\S$.

\begin{corollary}\label{cor:diskinbing}
Let $\S$ be a surface link with first component a trivial disk. Let $J$ be a knot in $S^3\setminus \nu(\partial \S)$. Suppose $J$ bounds a smooth disk $D$ into the complement of $\BD(\S)$. Then $J$ bounds a smooth disk into the complement of $\S$.
\end{corollary}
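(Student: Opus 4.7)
The plan is to invoke Lemma~\ref{lemma:covering-link-of-BD-is-original-link}, which exhibits $\Sigma$ as a covering surface link of $\BD(\Sigma)$ via the 2-fold branched cover $p \colon \widetilde B \to B^4$ with branching set the first component $\Sigma'_1 \subseteq \BD(\Sigma_1)$. The strategy is to lift the given disk $D$ to this cover, transport the lift back to $(B^4, \Sigma)$ using the pair diffeomorphism furnished by that lemma, and verify that the resulting disk has boundary $J$.

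The first step is to observe that since $D$ is disjoint from $\BD(\Sigma)$, in particular $D \cap \Sigma'_1 = \emptyset$, so $\lk(J, \partial \Sigma'_1) = 0$. Because $\Sigma'_1$ is an unknotted disk, $\widetilde B \cong B^4$, and the vanishing of this linking number implies that each of $J$ and $D$ lifts to a pair of disjoint copies $\widetilde J_1 \sqcup \widetilde J_2$ and $\widetilde D_1 \sqcup \widetilde D_2$, with $\partial \widetilde D_i = \widetilde J_i$. Since $D$ avoids $\BD(\Sigma) \sm \Sigma'_1$, each $\widetilde D_i$ is disjoint from the full preimage of this sublink.

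Next, Lemma~\ref{lemma:covering-link-of-BD-is-original-link} provides a surface link $\Sigma^{\mathrm{cov}} \subseteq \widetilde B$ (formed by selecting one lift of each component of $\BD(\Sigma) \sm \Sigma'_1$) together with a pair diffeomorphism $\phi \colon (\widetilde B, \Sigma^{\mathrm{cov}}) \to (B^4, \Sigma)$. As the proof of that lemma makes explicit, $\widetilde B$ decomposes as two copies $\widetilde W$ and $\widetilde W'$ of $W := B^4 \sm \nu \Sigma_1$ glued together over the branched piece above $\nu \Sigma_1$. I would arrange $\phi$ so that its restriction to the ``primary'' copy $\widetilde W$ coincides with the canonical identification $\widetilde W \toiso W$ induced by $p$. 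Because $J \subseteq S^3 \sm \nu \partial \Sigma_1 \subseteq \partial W$, after relabeling we may assume $\widetilde J_1 \subseteq \partial \widetilde W$, and then $\phi(\widetilde J_1) = J$. Since $\widetilde D_1$ is disjoint from $\Sigma^{\mathrm{cov}}$, the image $\phi(\widetilde D_1)$ is then a smooth disk in $B^4$ with boundary $J$, disjoint from $\Sigma$, as required.

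The main obstacle will be verifying that $\phi$ can indeed be chosen to restrict to the canonical identification on $\widetilde W$, so that $\phi(\widetilde J_1)$ is literally $J$ rather than some other knot in $S^3 \sm \nu \partial \Sigma$. This calls for a careful inspection of the construction in the proof of Lemma~\ref{lemma:covering-link-of-BD-is-original-link}: the ambient isotopy retracting $\widetilde \Sigma_2^1$ to a disk in $\partial \widetilde W$, and the subsequent identification of the union of $\widetilde W$ with a tubular neighborhood of this disk with $(B^4, \Sigma)$, should be achievable by an ambient isotopy supported away from the bulk of $\widetilde W$. Granting this, the corollary follows.
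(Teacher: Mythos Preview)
Your proposal is correct and follows essentially the same approach as the paper: lift everything to the 2-fold branched cover over the first component of $\BD(\Sigma_1)$, invoke Lemma~\ref{lemma:covering-link-of-BD-is-original-link} to identify a sublink of the preimage with $\Sigma$, and use a lift of $D$ as the desired disk.

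The one place where the paper is cleaner is exactly the point you flag as the main obstacle. Rather than arranging for $\phi$ to restrict to the canonical identification on $\widetilde W$ (which is doable but requires tracking the 4-dimensional isotopy in the proof of Lemma~\ref{lemma:covering-link-of-BD-is-original-link}), the paper simply treats $D$ as an additional component of the surface link and observes that the branch locus $\partial\Sigma'_1$ is an \emph{unknot split from $J$} in $S^3$: since $J \subseteq S^3 \setminus \nu(\partial\Sigma)$ and $\partial\BD(\Sigma_1) \subseteq \nu(\partial\Sigma_1)$, the curve $\partial\Sigma'_1$ bounds a disk in $\nu(\partial\Sigma_1)$ missing $J$. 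Hence the boundary covering link $\partial\widetilde\Sigma \cup \widetilde J_1$ is immediately equivalent to $\partial\Sigma \cup J$, with no need to control the ambient isotopy inside $B^4$. This observation lets the paper conclude in one line what you set up as a further verification.
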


\begin{proof}
Let $\S_D:=\BD(\S)\cup D$. As usual, call the components of $\BD(\S)$ by the names $\S_1,\ldots,\S_{n+1}$, where $\S_1,\S_2$ are the components obtained by taking the Bing double of the first component of $\Sigma$. Consider the preimage of $\S_D\setminus\Sigma_1$ in the 2-fold cyclic cover of $B^4$ branched along $\S_1$, as in Lemma~\ref{lemma:covering-link-of-BD-is-original-link}.

For $i=2,\ldots, n+1$ let $\Sigma_i^1$ and $\Sigma_i^2$ denote the two lifts of $\S_i$, and let $D^1$ and $D^2$ denote the two lifts of $D$. By Lemma~\ref{lemma:covering-link-of-BD-is-original-link}, we know that (after perhaps exchanging the names of $\Sigma_i^1$ and $\Sigma_i^2$ for some values of $i$) the surface link $\widetilde{\S}:=\sqcup_{i=2}^{n+1}\Sigma_i^1$ is equivalent to $\S$. Moreover, since $\partial\S_1$ is an unknot split from $J$, the link $\partial\widetilde{\S}\cup \partial D^1$ is equivalent to $\partial \S\cup J$ (perhaps after switching the roles of $D^1$ and $D^2$). Then $D^1$ is a slice disk for $J$ into the complement of $\S$, as desired.%Then since $D^1$ is a slice disk for $\partial D^1$ into the complement of $\widetilde{\S}$, we conclude that $J$ bounds a disk into the complement of $\S$
\end{proof}

\subsection{JSJ trees}\label{subsection-JSJ-trees}

We recall some 3-manifold theory that will be used in the forthcoming arguments.

Every compact, irreducible 3-manifold $X$ with toroidal boundary admits a JSJ decomposition into finitely many codimension zero submanifolds, each of which are again compact, irreducible 3-manifolds with toroidal boundary. Each of these submanifolds is either Seifert fibered or has the property that every incompressible torus is boundary parallel \cite{jaco1979seifert, johannson1979homotopy}. A JSJ decomposition is obtained by cutting the 3-manifold $X$ along incompressible, non-boundary-parallel tori. A minimal collection of these tori are called \emph{JSJ tori}.  The closures of the connected components of the complement of the JSJ tori are called \emph{JSJ pieces}. The JSJ tori are unique up to isotopy, and are preserved (although possibly permuted) by any diffeomorphism from $X$ to itself. 

The JSJ decomposition of a 3-manifold naturally yields a graph with a vertex corresponding to each JSJ piece and an edge between two vertices if and only if the corresponding pieces share a JSJ torus. When the 3-manifold is the complement of a link in $S^3$, which is the only case considered here, this graph is a forest of trees \cite[Section~4]{JSJBudney}. The number of connected components of the graph is given by the number of split components of the link.

The next lemma is entirely 3-dimensional, but we state it in terms of surface links in $B^4$ in order to use our established Bing doubling terminology.
%Perhaps this also makes it easier to use later.
Note that the exterior of the Bing double pattern in a solid torus is diffeomorphic to the exterior of the Borromean rings (see Figure~\ref{fig:BorRings}), so this 3-manifold will appear frequently in our JSJ decomposition arguments.

\begin{figure}\center
\includegraphics[scale=1]{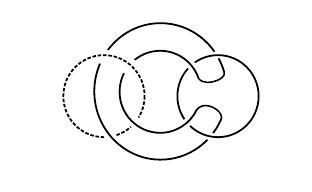}
\caption{The exterior of the Borromean rings in $S^3$ can be identified with the exterior of the Bing double of the unknot in the solid torus $S^3\setminus \nu(U)$, where $U$ denotes the dashed component.}
\label{fig:BorRings}
\end{figure}

\begin{lemma}\label{lemma:JSJ-trees}
  Let $\Sigma$ be a surface link in $B^4$. Let $T$ be the tree associated with the JSJ decomposition of $X:=S^3 \sm \nu (\partial \S)$. Let $Y \subseteq X$ be the JSJ piece that contains $\partial \S_1$ and let $v_Y$ be the vertex of $T$ corresponding to $Y$. Assume that $\partial \S_1$ is not a split unknotted component of $\partial \S$.  The JSJ tree of $X_{\BD} := S^3 \sm \nu (\partial \BD(\S))$ is obtained from $T$ by first adding a new vertex $v_E$ corresponding to a JSJ piece diffeomorphic to the exterior, $E$, of the Borromean rings, and then adding an edge connecting $v_E$ and $v_Y$.
\end{lemma}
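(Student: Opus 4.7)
The plan is to identify $X_{\BD}$ directly as the union $X \cup_{T_1} E$ along the boundary torus $T_1 := \partial \overline{\nu}(\partial\Sigma_1)$ of $X$, and then verify that $T_1$ becomes a new JSJ torus while the JSJ tori of $X$ survive unchanged. Recall that $\BD(\partial\Sigma_1)$ lives inside the solid tubular neighborhood $V_1 := \overline{\nu}(\partial\Sigma_1)$ of $\partial\Sigma_1$. By the well-known identification illustrated in Figure~\ref{fig:BorRings}, the exterior $V_1 \setminus \nu\BD(\partial\Sigma_1)$ is diffeomorphic to the Borromean rings exterior $E$, with $\partial V_1 = T_1$ corresponding to the meridian torus of one of the three rings. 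Since $X = S^3\setminus\nu(\partial\Sigma)$ already excludes $V_1$, we get $X_{\BD} = X \cup_{T_1} E$.

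The next step is to check that $T_1$ is incompressible and non-boundary-parallel in $X_{\BD}$. On the $E$-side, $T_1$ is a boundary torus of the hyperbolic manifold $E$, hence incompressible and not parallel to either other boundary torus. On the $X$-side, incompressibility of $T_1$ amounts to saying that $\partial\Sigma_1$ does not bound a disk in $S^3 \setminus \nu(\partial\Sigma \setminus \partial\Sigma_1)$, which holds by hypothesis. Standard gluing arguments then give incompressibility in $X_{\BD}$. For non-boundary-parallelism, any $T^2 \times I$ with $T_1$ as one side must lie entirely on one side of $T_1$ by incompressibility. The $E$-side possibility is ruled out since $E$ is hyperbolic; the $X$-side possibility forces $\partial\Sigma_1$ to cobound an embedded annulus with some $\partial\Sigma_j$ ($j\geq 2$) in the complement of $\partial\Sigma \setminus \{\partial\Sigma_1,\partial\Sigma_j\}$, which exhibits $\partial\Sigma_1$ as an unknot split from the remaining components, again contradicting the hypothesis.

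It remains to assemble the JSJ decomposition of $X_{\BD}$. The existing JSJ tori of $X$ are incompressible in $X_{\BD}$ (gluing the atoroidal piece $E$ along an incompressible torus never introduces compressing disks) and non-boundary-parallel (the only new boundary tori of $X_{\BD}$ lie inside $E$, to which they cannot be parallel without $E$ being a product). Conversely, any incompressible non-boundary-parallel torus $T \subseteq X_{\BD}$ may be isotoped to be disjoint from $T_1$ by a standard innermost-disk argument using the incompressibility of $T_1$. Then $T$ lies entirely in $E$ or entirely in $X$: the former case forces $T$ to be boundary-parallel in $E$ (since $E$ is atoroidal), so $T$ is parallel to $T_1$ or to a boundary of $X_{\BD}$; the latter case forces $T$ to be isotopic to a JSJ torus of $X$ or to a boundary of $X$ (including $T_1$). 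Thus the JSJ tori of $X_{\BD}$ are precisely those of $X$ together with $T_1$. Finally, to check that the new piece $E$ really is a JSJ piece, observe that $E$ is atoroidal and that neither $Y$ nor $E$ admits a Seifert fibration extending across $T_1$, since $E$ admits no Seifert fibration at all, so $T_1$ cannot be absorbed into a larger Seifert fibered piece.

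The resulting JSJ tree is obtained from $T$ by adjoining a single new vertex $v_E$, corresponding to the piece $E$, connected by an edge to the vertex $v_Y$ of the unique JSJ piece of $X$ containing $T_1$ in its boundary. The main obstacle in carrying out this plan is the careful verification that both $T_1$ fails to be boundary-parallel in $X_{\BD}$ and the JSJ tori of $X$ fail to be boundary-parallel there, since these are the subtle places where the hypothesis that $\partial\Sigma_1$ is not a split unknotted component must be used.
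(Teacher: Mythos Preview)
Your proof follows essentially the same approach as the paper's: identify $X_{\BD} = X \cup_{T_1} E$ along the torus $T_1 = \partial\overline{\nu}(\partial\Sigma_1)$ and argue that $T_1$ is a new JSJ torus. In fact you are considerably more thorough than the paper, which only checks incompressibility of $T_1$ on each side (via nontriviality of meridian and longitude in $\pi_1$) and does not explicitly address non-boundary-parallelism, survival of the old JSJ tori, or why $T_1$ cannot be absorbed into an adjacent piece.

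One small gap: in your $X$-side non-boundary-parallelism argument, a product region between $T_1$ and some $T_j$ inside $X$ actually forces $X \cong T^2\times I$ (since $T_1,T_j\subset\partial X$ and $X$ is connected), i.e.\ $\partial\Sigma$ is a Hopf link; your conclusion that $\partial\Sigma_1$ is then \emph{split} from the rest is incorrect, and indeed in that edge case $T_1$ is boundary-parallel in $X_{\BD}$ and the lemma's conclusion fails as stated. The paper's proof has the same blind spot, and the lemma is only applied in the paper to links whose exteriors are not $T^2\times I$, so this does not affect the downstream results.
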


\begin{figure}
\labellist
% \pinlabel $T$ at 10 730
%  \pinlabel $T\cup v_E$ at 620 730
 \pinlabel $v_Y$ at 190 650
 \pinlabel $v_Y$ at 480 650
 \pinlabel $v_E$ at 530 650
\endlabellist
\includegraphics[width=80mm]{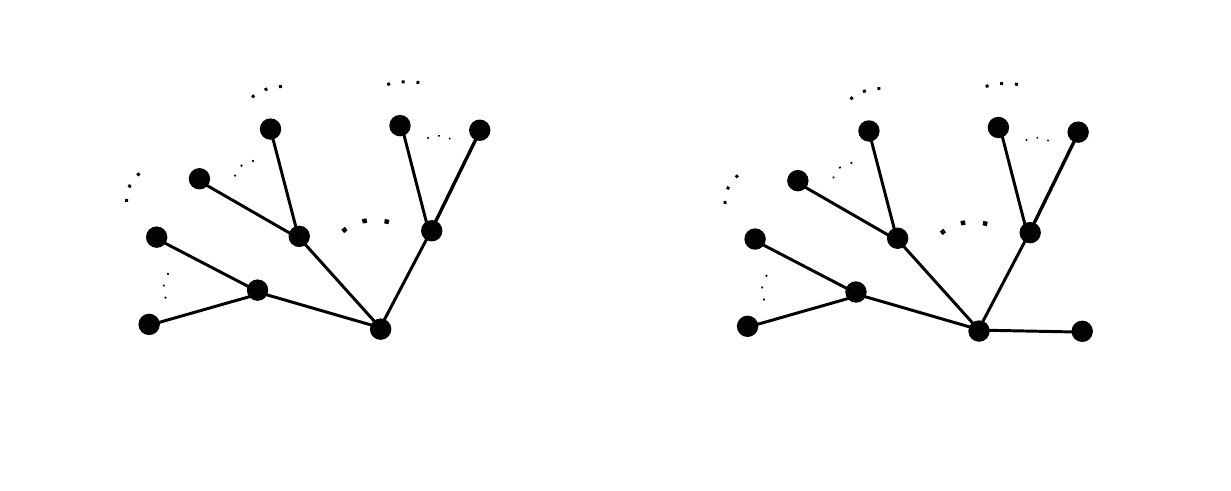}
\caption{JSJ trees for $S^3 \sm \nu (\partial \S)$ and $S^3 \sm \nu (\partial \BD(\S))$.} \label{fig:tree-plus-vE}
\end{figure}

\begin{proof}
Let $K:=\partial \S_1$ and let $F$ be the torus $\partial \ol{\nu} K$.  Note that $X_{\BD} \cong X \cup_F E$, where the zero-framed longitude of one Borromean component is glued to the meridian of $K$ on $F$ and the meridian of the Borromean component becomes the longitude of $F$.
 We claim that $F$ is a JSJ torus of $X_{\BD}$.
  Since $K$ is not a split unknotted component, the longitude of $F$ is nontrivial in~$\pi_1(X)$.  In $X_{\BD}$, the exterior of the satellite link, the longitude of $F$ is therefore again nontrivial. The meridian of $F$ is nontrivial in $X$.  The surface $F$ is also incompressible in $E$. Therefore $F$ is a JSJ torus of $X_{\BD}$ as desired. To obtain the JSJ tree for $X_{\BD}$, we start with $T$, the JSJ tree for $X$. Then, as shown in Figure~\ref{fig:tree-plus-vE}, we add a new vertex corresponding to $E$, and an edge connecting it to $v_Y$, since $Y$ is precisely the component that shares a JSJ torus with $E$. This gives the asserted JSJ tree.
\end{proof}

By repeatedly applying Lemma~\ref{lemma:JSJ-trees}, we see that iterated Bing doubling adds a line to the JSJ tree of the link exterior, with as many edges as applications of $\BD$. This feature of the tree allows us to restrict the possible automorphisms of the underlying 3-manifold, using the following result, which is a direct consequence of the uniqueness of JSJ decompositions.

\begin{theorem}\label{thm:key-JSJ-fact}
A diffeomorphism of a link exterior to itself induces an automorphism of the associated JSJ tree. Moreover each vertex, corresponding to a JSJ piece $Y$ say, must be sent to another vertex corresponding to a JSJ piece $Y'$ such that $Y \cong Y'$. \hfill $\Box$
\end{theorem}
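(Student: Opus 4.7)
The plan is to bootstrap from the isotopy-uniqueness of the JSJ decomposition established by Jaco--Shalen and Johannson. Let $X$ be the link exterior and let $\mathcal{T} = T_1 \sqcup \cdots \sqcup T_m$ be a minimal system of incompressible, non-boundary-parallel tori realizing the JSJ decomposition of $X$. Let $f \colon X \to X$ be a self-diffeomorphism. First I would observe that $f(\mathcal{T})$ is again a disjoint union of smoothly embedded tori in $X$, and that each $f(T_i)$ is incompressible and non-boundary-parallel (these are topological properties preserved by $f$). Moreover, the components of the complement $X \sm f(\mathcal{T})$ are the images under $f$ of the JSJ pieces of $X$, so they are compact irreducible 3-manifolds with toroidal boundary, each of which is either Seifert fibered or atoroidal. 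Minimality of the collection is also preserved. Hence $f(\mathcal{T})$ is itself a minimal system of JSJ tori for~$X$.

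Next, by the uniqueness statement of JSJ~\cite{jaco1979seifert, johannson1979homotopy}, there exists an ambient isotopy $h_t$ of $X$, with $h_0 = \Id$, carrying $f(\mathcal{T})$ back to $\mathcal{T}$ setwise. Replacing $f$ by $h_1 \circ f$ (which is isotopic to $f$), I may assume that $f$ preserves the system $\mathcal{T}$ as a set, permuting its components. Consequently $f$ restricts to a diffeomorphism from the disjoint union of JSJ pieces of $X$ to itself, and this restriction permutes the pieces. This is exactly the data of an automorphism of the JSJ tree $T$: each vertex $v_Y$ corresponding to a piece $Y$ is sent to the vertex $v_{f(Y)}$, and two pieces share a JSJ torus if and only if their images do, so edges are preserved.

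Finally, for each JSJ piece $Y$ of $X$, the restriction $f|_Y \colon Y \to f(Y)$ is a diffeomorphism between compact 3-manifolds with boundary. In particular $Y \cong f(Y)$, so the tree automorphism $v_Y \mapsto v_{f(Y)}$ sends each vertex to a vertex labeled by a diffeomorphic JSJ piece, as claimed.

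The only place where any care is needed is the appeal to isotopy-uniqueness of JSJ decompositions, which is a standard but nontrivial input; everything else is a direct consequence of the fact that a diffeomorphism preserves incompressibility, boundary-parallelism, and minimality of a torus system, together with the elementary observation that diffeomorphic pieces give rise to the asserted tree automorphism.
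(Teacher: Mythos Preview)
Your proof is correct and follows exactly the approach the paper has in mind: the paper does not give a proof of this theorem at all, simply noting that it ``is a direct consequence of the uniqueness of JSJ decompositions,'' which is precisely the input you invoke and unpack. Your argument spells out the standard details (that $f$ carries a JSJ system to a JSJ system, then isotopy-uniqueness lets you assume $f$ preserves the torus system setwise, inducing the claimed tree automorphism and piecewise diffeomorphisms), and nothing more is needed.
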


The next lemma refers to the \emph{3-keychain link}: the three-component link consisting of an unknot and two copies of its meridian (see Figure~\ref{fig:3keychain}). %the Hopf link by taking a 0-framed parallel copy of one of its components.
The exterior of this link is diffeomorphic to the cartesian product of a circle $S^1$ with a connected planar surface with three boundary components (i.e.\ a pair of pants).

\begin{figure}
%\labellist
%  \pinlabel
%\endlabellist
\includegraphics[width=50mm]{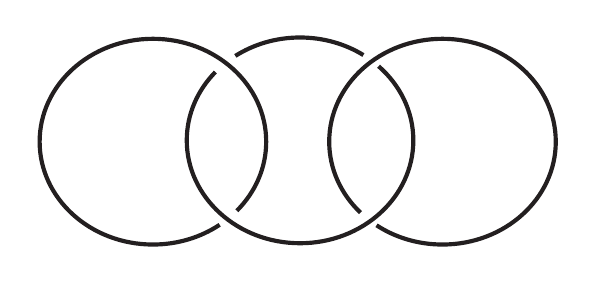}
\caption{The 3-keychain link.}\label{fig:3keychain}
\end{figure}

\begin{lemma}\label{lemma:JSJ-tree-of-covering-link}
Let $L$ be an $n$--component link in $S^3$ with first component $L_1$ unknotted. Let $\wt{L}$ be the covering link obtained by taking the 2-fold branched cover with branching set the first component of $\BD(L)$, forgetting the branching set, and forgetting one of the lifts of $(\BD(L))_2$ $($i.e.\ the other component of $\BD(L_1))$. The JSJ tree of the exterior of $\wt{L}$ is obtained as follows. Let $T_1$ and $T_2$ be two copies of the JSJ tree for $L$, and let $v_1$ and $v_2$ be the vertices corresponding to the JSJ piece containing the boundary of a tubular neighborhood of $L_1$, in each copy.  Take $T_1 \sqcup T_2$, add a new vertex $v_C$, and add two new edges, one joining $v_C$ to $v_1$ and one joining $v_C$ to $v_1$. The JSJ piece corresponding to $v_C$ is the 3-keychain link exterior.
\end{lemma}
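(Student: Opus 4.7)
The plan is to analyze the 2-fold branched covering $p\colon \wt{S^3}\to S^3$ along $\alpha := \BD(L)_1$ piece by piece, using the decomposition $S^3\sm\nu(\BD(L))=M\cup_T E$ from Lemma~\ref{lemma:JSJ-trees}, where $M=S^3\sm\nu(L)$, $E\cong\nu(L_1)\sm\nu(\BD(L_1))$ is the Borromean ring exterior, and $T=\partial\nu(L_1)$. The key observation is that $\alpha$, being a Bing component of $L_1$, has winding number zero in $\nu(L_1)$ and thus bounds an embedded disk there, so every loop in $V_{L_1}:=S^3\sm\nu(L_1)$---and in particular every loop in $M\subseteq V_{L_1}$---has algebraic linking zero with $\alpha$. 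Hence $p$ restricts to a trivial 2-fold cover over $M$, yielding $p^{-1}(M)=M_1\sqcup M_2$, two disjoint copies of $M$.

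Over $E$ the cover is non-trivial. The resulting 2-fold cover $\wt E$ of the Borromean exterior has five toroidal boundary components: one above $\partial\nu(\alpha)$ (the branch torus), two above $\partial\nu(\BD(L)_2)$, and two above $T$. Passing from $\wt E$ to the corresponding piece of $S^3\sm\nu(\wt L)$ requires two Dehn fillings---one along the branch-set torus and one along the boundary of the forgotten lift $\beta^2$ of $\BD(L)_2$---producing a 3-manifold $N$ with three toroidal boundary components. Thus
\[
S^3\sm\nu(\wt L)=M_1\cup_{\tau_1} N\cup_{\tau_2} M_2,
\]
where $\tau_1,\tau_2$ are the two lifts of $T$. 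The main technical step, and what I anticipate to be the principal obstacle, is to identify $N$ with the 3-keychain link exterior $P\times S^1$. My plan is to verify this by an explicit surgery/Kirby computation: start from a surgery description of the pair $(\nu(L_1),\BD(L_1))$, construct the 2-fold cover combinatorially using a Seifert disk for $\alpha$ in $\nu(L_1)$, perform the two Dehn fillings, and simplify the resulting diagram until it matches the standard surgery presentation of $P\times S^1$. The $\Z/2$-symmetry of the Bing pattern interchanging $\alpha$ and $\beta$ should make the deck transformation transparent and reveal the regular fibers of the emergent Seifert structure on $N$.

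Once $N\cong P\times S^1$ is established, the JSJ claim follows from the uniqueness of JSJ decompositions: $P\times S^1$ is Seifert-fibered and hence a single JSJ piece; the tori $\tau_1,\tau_2$ are essential as lifts of the essential torus $T\subseteq S^3\sm\nu(\BD(L))$; and each $M_i$ contributes its own JSJ decomposition meeting $N$ along $\tau_i$. The JSJ tree of $S^3\sm\nu(\wt L)$ is therefore two copies of the JSJ tree of $M$ joined by a central vertex $v_C$ representing $N$, with edges from $v_C$ to the vertices $v_1,v_2$ in the two copies corresponding to the JSJ piece of $M$ containing $\partial\nu(L_1)$, exactly as stated.
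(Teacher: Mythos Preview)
Your approach is correct but takes a different route from the paper. The paper never computes the cover of $E$ piece by piece. Instead, it identifies the covering link $\wt L$ globally: by restricting the argument of Lemma~\ref{lemma:covering-link-of-BD-is-original-link} to $S^3$, one sees that $\wt L$ is isotopic to two split copies of $L$ (in disjoint $3$-balls) with a trivial band sum performed on the two copies of $L_1$. Once $\wt L$ is recognised as this band/connected sum, the JSJ claim follows immediately from the classical fact (going back to Schubert) that in the exterior of a connected sum the summands are separated by a $3$-keychain piece.

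Your decomposition $M_1\cup_{\tau_1}N\cup_{\tau_2}M_2$ is correct and self-contained---it does not appeal to the $4$-dimensional Lemma~\ref{lemma:covering-link-of-BD-is-original-link}---but it trades that dependence for the Kirby computation identifying $N$. The paper's route avoids any computation at the cost of invoking the geometric identification of the covering link. If you want to keep your piece-by-piece framework but bypass the Kirby calculus, note that since $L_1$ is unknotted you may rewrite $E$ as $S^3\sm\nu(\alpha\cup\beta\cup c)$ with $c$ the core of $S^3\sm\nu(L_1)$ and $\alpha,\beta,c$ Borromean; then $N\cong S^3\sm\nu(c^1\cup c^2\cup\beta^1)$, where $c^1,c^2,\beta^1$ are lifts in the $2$-fold branched cover of $S^3$ along the unknot $\alpha$. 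Since any two components of the Borromean rings form an unlink, this cover and the resulting $3$-component link can be read off directly, giving the keychain without diagram chasing. One small caution: your assertion that ``$\tau_1,\tau_2$ are essential as lifts of the essential torus $T$'' is not automatic in general---lifts of essential tori need not be essential---so you should instead argue incompressibility on each side using $M_i\cong M$ and the identification of $N$; this matches the implicit non-splitness hypothesis already present in Lemma~\ref{lemma:JSJ-trees}.
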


\begin{figure}
\labellist
 \pinlabel $T_1$ at 160 760
 \pinlabel $T_2$ at 440 760
 \pinlabel \small{$v_1$} at 255 680
 \pinlabel \small{$v_C$} at 300 680
 \pinlabel \small{$v_2$} at 350 680
\endlabellist
\includegraphics[width=3.5in]{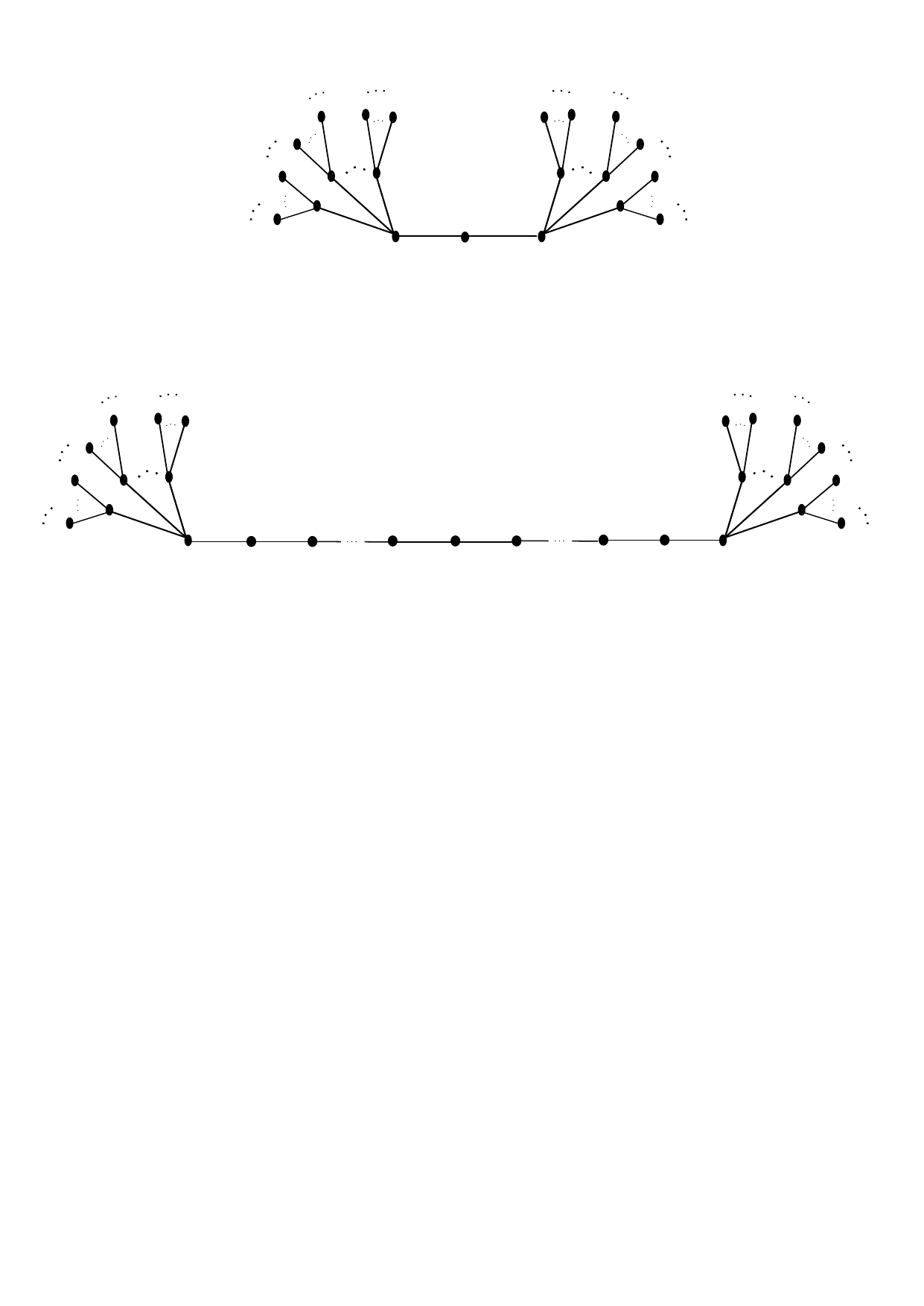}
\caption{JSJ tree for the exterior of the covering link $\wt{L}$ from a JSJ tree for the exterior of $L$.} \label{fig:covering-tree}
\end{figure}

%AK: I was a little lazy about indicating precisely what parts of the picture constitute T1, T2... Good enough? Could color-code but it seems like overkill.

  \begin{proof}
Observe that  $\wt{L}$ is isotopic to the result of taking two split copies of $L$ in disjoint copies of $B^3$, and performing a trivial band sum on the two copies of $L_1$. This follows from the argument in the proof of Lemma~\ref{lemma:covering-link-of-BD-is-original-link}, and restricting to $S^3$; see Figure~\ref{fig:trivialize}.
Therefore, the exterior of $\wt{L}$ has JSJ decomposition consisting of two copies of the JSJ decomposition for $L$ joined by a central $3$-keychain link exterior $C$.
It is a standard fact that, in the JSJ decomposition of exteriors of connected sums, keychain link exteriors appear in between the JSJ pieces of the summands (see e.g. \cite{schubert1953knoten}).  The conclusion for the resulting JSJ tree is immediate.
  \end{proof}

\begin{figure}
\labellist
 \pinlabel $T_1$ at 25 560
 \pinlabel $T_2$ at 565 560
 \pinlabel \small{$v_1$} at 255 480
 \pinlabel \small{$v_C$} at 295 480
 \pinlabel \small{$v_2$} at 335 480
\endlabellist
\includegraphics[width=5in]{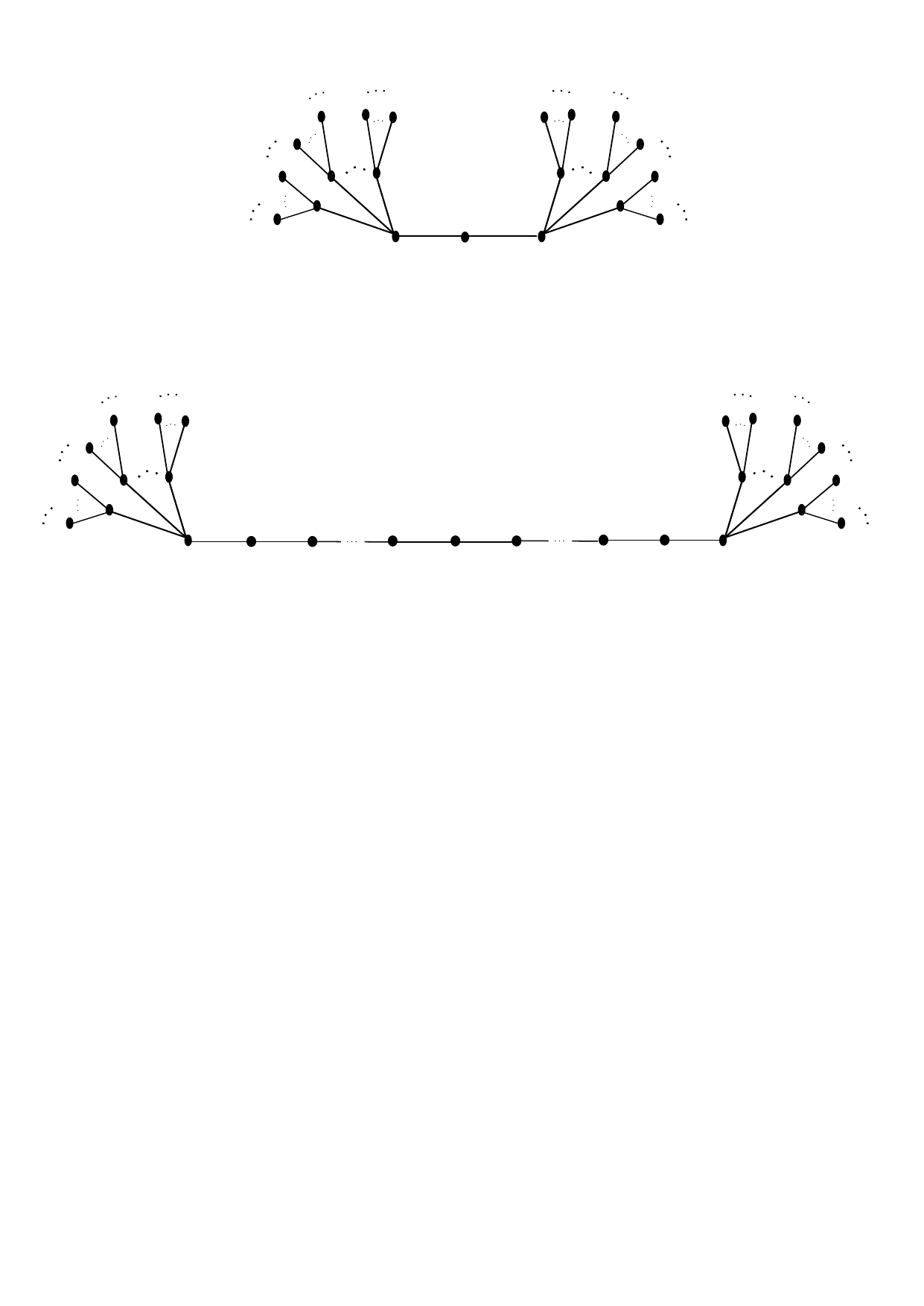}
\caption{JSJ tree for the exterior of a covering link of an iterated Bing double.} \label{fig:long-stick-tree}
\end{figure}

\subsection{Bing doubling exotic surface link pairs}\label{subsection:lifting-diffeomorphisms}

We now arrive at the main goal of this section. By combining our understanding of JSJ decompositions with our work on covering surfaces, as in Section~\ref{sec:covering}, 
%(see Section~\ref{sec:covering})\footnote{MP: I don't think adding brackets is an improvement.} \sout{from Section~\ref{sec:covering} on covering surfaces,
we show that if two surfaces are smoothly inequivalent, then under some fairly weak assumptions, the results of Bing doubling one disk component of each are also smoothly inequivalent.

Combining the work in Sections~\ref{sec:covering} and~\ref{subsection-JSJ-trees} yields the following proposition, which shows that the disk links from Section~\ref{section:exotic-pair-disk-links} are not just non-equivalent, but in fact, any of their iterated Bing doubles have non-diffeomorphic complements.

\begin{proposition}\label{prop:bingcomp}
Let $\S_1 = \BD(D_1)$ and $\S_2  = \BD(D_2)$ be the disk links from Proposition~\ref{prop:base-case}. For any $n\ge 0$, $\BD^n(\S_1)$ and $\BD^n(\S_2)$ have non-diffeomorphic complements.
\end{proposition}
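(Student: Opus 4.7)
The plan is to proceed by induction on $n$, with the inductive step reducing the case of $\BD^n(\S_i)$ to that of $\BD^{n-1}(\S_i)$ via the covering surface link construction. The base case $n=0$ is exactly the assertion that $B^4\sm\nu\BD(D_1)$ and $B^4\sm\nu\BD(D_2)$ are non-diffeomorphic; this follows from the argument used to prove Proposition~\ref{prop:base-case}, since the obstruction there --- that the curve $\gamma$ bounds a smooth disk in one exterior but not the other --- is intrinsic to the 4-dimensional exterior, and the concluding JSJ-plus-Mostow-rigidity step applies equally well to any hypothetical diffeomorphism of exteriors as written.

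For the inductive step, suppose for contradiction we have a diffeomorphism $\Phi\colon B^4\sm\nu\BD^n(\S_1)\to B^4\sm\nu\BD^n(\S_2)$. Iterating Lemma~\ref{lemma:JSJ-trees}, the JSJ tree of $S^3\sm\nu(\partial\BD^n(\S_i))$ has an $n$-step tail of Borromean-exterior pieces appended to the JSJ tree for $S^3\sm\nu(\partial\S_i)$, with the outermost Borromean piece bounded by precisely the two components of $\BD^n(\S_i)$ introduced at the final doubling step. I would first verify that the outermost Borromean piece remains canonically distinguished in the 3-manifold $\partial(B^4\sm\nu\BD^n(\S_i))$ (even after the meridional $0$-filling of the two outermost Bing components), so that by Theorem~\ref{thm:key-JSJ-fact} the boundary restriction of $\Phi$ carries the two outermost Bing components of $\BD^n(\S_1)$ to those of $\BD^n(\S_2)$ up to ambient isotopy. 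Once this has been arranged, I would choose one of these components on each side as a branching set; each is an unknotted disk in $B^4$, so by Lemma~\ref{lemma:covering-link-of-BD-is-original-link} the 2-fold branched cover of $B^4$ along it, after forgetting one of the two lifts of the second outermost Bing component, contains a copy of $\BD^{n-1}(\S_i)$. The lift of $\Phi$ (post-composed with a deck involution if necessary to align the forgotten lifts) then descends to a diffeomorphism $B^4\sm\nu\BD^{n-1}(\S_1)\to B^4\sm\nu\BD^{n-1}(\S_2)$, contradicting the inductive hypothesis.

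The hard part will be the JSJ step. The diffeomorphism $\Phi$ restricts to a self-diffeomorphism of the closed 3-manifold obtained by $0$-surgery on $\partial\BD^n(\S_i)$, rather than of $S^3\sm\nu(\partial\BD^n(\S_i))$ itself, and the $0$-filling can modify or collapse some of the JSJ pieces in the Borromean tail --- for instance, $0$-filling the two free boundary tori of the outermost Borromean piece turns it into the Seifert-fibered space $(T^2\sm D^2)\times S^1$. One must therefore argue that the image of the outermost Borromean piece, together with the cocores of its $0$-filling solid tori, is still canonically distinguished inside the boundary 3-manifold, for example by identifying it as the unique JSJ piece of a particular homeomorphism type lying at the far end of the Borromean tail. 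A secondary concern is that the lifting through the branched cover is compatible with the forgetful step; this should be straightforward because the two lifts of the non-branched outermost Bing component are exchanged by the deck involution, so composing $\tilde\Phi$ with that involution (if needed) makes the forgotten lifts correspond.
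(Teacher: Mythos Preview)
Your inductive strategy could be made to work, but the paper's argument is more direct and avoids lifting $4$-dimensional diffeomorphisms altogether. The paper tracks the curve $\gamma$ through all $n$ at once: $\gamma$ bounds a smooth disk in $B^4\sm\nu\S_2$, hence the same disk lies in $B^4\sm\nu\BD^n(\S_2)$; and Corollary~\ref{cor:diskinbing} (the covering argument applied once to the curve $\gamma$ rather than inductively to the whole surface link) shows $\gamma$ bounds no smooth disk in $B^4\sm\nu\BD^n(\S_1)$. One then needs only that every self-diffeomorphism of the $0$-surgered boundary $M^3$ fixes $\gamma$ up to isotopy --- precisely the JSJ computation you flag as ``the hard part'', and the paper carries it out directly: the JSJ pieces of $M^3$ form a line with the unique hyperbolic piece $S^3\sm\nu K$ at one end, so any automorphism fixes that piece, and triviality of its isometry group pins down $\gamma$. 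No induction, no branched-cover lift.

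Your approach also has a gap beyond the one you flag. In the step ``$\tilde\Phi$ descends to a diffeomorphism of $\BD^{n-1}$-complements'', the preimage $p^{-1}\big(\BD^n(\S_i)\sm\S_{i,1}\big)$ contains two lifts of \emph{every} non-branching component, not just of the second outermost Bing component. To reach $B^4\sm\nu\BD^{n-1}(\S_i)$ you must extend $\tilde\Phi$ over the tubular neighborhood of one lift of each of them; this requires $\tilde\Phi$ to carry one coherent copy of $\BD^{n-1}(\S_1)$ inside the covering link to one coherent copy of $\BD^{n-1}(\S_2)$ (rather than a mixture of the two halves), and to respect the filling framings on all the forgotten components. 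Controlling this needs a second JSJ analysis, now of the covering link exterior --- this is Lemma~\ref{lemma:JSJ-tree-of-covering-link} as used in Proposition~\ref{prop:exotic-implies-exotic-BD} --- which your sketch does not address. The deck-involution trick handles only the ambiguity in the lift of the second Bing component, not the alignment of the deeper components.
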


\begin{proof}Recall that $\S_1$ and $\S_2$ are each a link of two disks in $B^4$ whose components are individually unknotted disks, $\partial \S_1=\partial \S_2$, and $\S_1$ and $\S_2$ are topologically isotopic rel.\ boundary.

In Proposition~\ref{prop:base-case}, we showed that $\S_1$ and $\S_2$ are not smoothly equivalent by studying a knot~$\gamma$ in $S^3\setminus \nu(\partial \S_i)$ which is preserved up to isotopy by any automorphism of $(S^3,\partial \S_i)$. The knot~$\gamma$ bounds a smooth disk into the complement of $\S_2$ but not into $\S_1$, implying that there is no diffeomorphism between $(B^4,\S_1)$ and $(B^4,\S_2)$.

Since $\gamma$ bounds a smooth disk in the complement of $\S_2$, $\gamma$ bounds a smooth disk (e.g.\ the same disk) into the complement of $\BD^n(\S_2)$ for any $n\ge 0$. Since $\gamma$ does not bound a smooth disk into the complement of $\S_1$, it follows from Corollary~\ref{cor:diskinbing} that $\gamma$ does not bound a smooth disk into the complement of $\BD^n(\S_1)$ for any $n\ge 0$. Therefore, if there is a diffeomorphism from the complement of $\BD^n(\S_1)$ to the complement of $\BD^n(\S_2)$, it must not preserve the isotopy class of~$\gamma$. We claim this is not possible.

%The boundary of $B^4\setminus \nu(\BD^n(\S_i))$ is obtained from 0-surgery on every component of $\BD^n(\S_i)$.
In Figure~\ref{fig:bingjsj}, we show a JSJ decomposition for $S^3\setminus \nu(\BD^n(\S_i))$. Recall that $\partial \S_i$ is a Bing double of a hyperbolic knot $K$ (cf.\ Lemma~\ref{lem:hyperbolic}), so $S^3\setminus \nu(\partial \S_i)$ has a JSJ decomposition of one essential torus cutting the manifold into two pieces: the complement of $K$ and a Borromean ring complement. Then $S^3\setminus \nu(\BD^n(\S_i))$ admits a JSJ decomposition with $n+2$ pieces in a line, as illustrated in Figure~\ref{fig:bingjsj}, with essential tori $T_1,\ldots, T_{n+1}$ in between. The first piece (bounded by $T_1$) is the complement of $K$, and every piece thereafter is a Borromean ring complement.

 The boundary $M^3$ of $B^4\setminus\nu(\BD^n(\S_i))$ is obtained from $S^3\setminus \nu(\partial \BD^n(\S_i))$ by performing 0-framed Dehn filling along every torus boundary component. Note that this Dehn filling on the Borromean rings exterior yields the 3-torus~$T^3$. In this closed 3-manifold $M^3$, $T_{n+1}$ bounds a copy of $(T^2\setminus\mathring{D}^2)\times S^1$, $T_1$ bounds a copy of $S^3\setminus \nu(K)$ and $T_i\sqcup T_{i+1}$ bounds $$T^3\setminus \nu(\text{two independent primitive curves})$$ for $i=1,\ldots, n$. This is again a JSJ decomposition for $M^3$. Since the complement of $K$ is not homeomorphic to any other piece in this JSJ decomposition, we conclude that any automorphism of $M^3$ preserves $T_1$ up to isotopy and induces an automorphism of $S^3\setminus \nu(K)$. Since $S^3\setminus \nu(K)$ has trivial automorphism group (Lemma~\ref{lem:hyperbolic}), we conclude that $\gamma$ is preserved up to isotopy by any automorphism of $M^3$. Therefore, there is no diffeomorphism from the complement of $\BD^n(\S_1)$ to the complement of $\BD^n(\S_2)$, as desired. 
\end{proof}

\begin{figure}
\labellist
  \pinlabel \footnotesize{$n+1$ vertices, each representing} at 102 -5
  \pinlabel \footnotesize{a Borromean ring complement} at 102 -12
  \pinlabel \footnotesize{$S^3\setminus \nu(K)$} at 208 8
  \pinlabel \footnotesize{\textcolor{red}{$T_1$}} at 191 23
  \pinlabel \footnotesize{\textcolor{red}{$T_2$}} at 156 23
  \pinlabel \footnotesize{\textcolor{red}{$T_{n+1}$}} at 53 23
\endlabellist
\includegraphics[width=90mm]{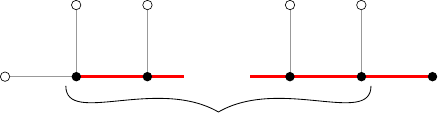}
\vspace{.2in}
\caption{A JSJ decomposition of $S^3\setminus \nu(\partial L_i)$ as in Proposition~\ref{prop:bingcomp}. The bold red edges represent essential tori. The black vertices are the components of the JSJ decomposition. The white vertices represent torus boundary components; grey edges between white and black vertices show to which JSJ component the corresponding boundary belongs.}\label{fig:bingjsj}
\end{figure}

Proposition~\ref{prop:bingcomp} allows us to immediately conclude that for every $n\geq 0$, $\BD^n(\S_1)$ and $\BD^n(\S_2)$ is an exotic pair of $(n+1)$--component Brunnian links. Next, we prove our main technical result on covering surfaces; it has the advantage  that it is applicable to a more general family of surface links, e.g.\ all of the families in Section~\ref{section:infinitely-many-surface-links} and those in Theorem~\ref{thm:increasing-the-genus-main-body}.

\begin{proposition}\label{prop:exotic-implies-exotic-BD}
   Let $\Sigma = \Sigma_1 \sqcup \Sigma_2$ and $\Sigma' = \Sigma_1' \sqcup \Sigma_2'$ be $2$--component surface links in $B^4$ with $\Sigma_1$ and $\Sigma_1'$ trivial disks. Suppose that $\partial \S = \partial \S' \subseteq S^3$, and let $X$ be the link exterior in $S^3$. Suppose that either:
    \begin{enumerate}
    \item[(i)]\label{item:JSJ-condition-i}  $X$ has no Borromean rings exterior and no 3-keychain link exterior in its JSJ decomposition; or
    \item[(ii)]\label{item:JSJ-condition-ii}  $X$ has no 3-keychain link exterior and exactly one Borromean rings exterior $E$ in its JSJ decomposition, but two of the three boundary components of $E$ are $\partial X$.
    \end{enumerate}
 %   In addition, suppose that each JSJ piece of $X$ occurs exactly once.
 Then, if the iterated Bing doubles $\BD^n(\Sigma)$ and $\BD^n(\Sigma')$ are smoothly equivalent $($Bing doubling the first component at each iteration$)$, then so are $\Sigma$ and $\Sigma'$.
\end{proposition}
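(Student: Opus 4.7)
The plan is to induct on $n$, reducing the $\BD^n$ case to $\BD^{n-1}$ via the covering surface link construction of Lemma~\ref{lemma:covering-link-of-BD-is-original-link}. The base case $n=0$ is vacuous, so suppose $n\geq 1$ and that $F\colon (B^4,\BD^n(\Sigma))\to (B^4,\BD^n(\Sigma'))$ is a diffeomorphism of pairs. Restricting to the boundary gives a self-diffeomorphism of the link exterior $X^{(n)}:=S^3\setminus\nu(\partial\BD^n(\Sigma))$, and applying Lemma~\ref{lemma:JSJ-trees} iteratively shows that the JSJ tree of $X^{(n)}$ is obtained from the JSJ tree $T$ of $X$ by appending a chain of Borromean rings exteriors $v_Y-v_{E_1}-\cdots-v_{E_n}$, where $v_Y\in T$ is the piece containing $\partial\Sigma_1$ and each $v_{E_i}$ corresponds to the $i$th Bing doubling. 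Tracking boundary tori: the outermost piece $E_n$ has two boundary components on $\partial X^{(n)}$ (the last Bing pair) and one internal torus, while each $E_i$ with $i<n$ has one external and two internal tori.

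The key step is to show that $E_n$ is canonically distinguished in the JSJ decomposition of $X^{(n)}$. Under hypothesis (i), the pieces $E_1,\dots,E_n$ are the only Borromean exteriors of $X^{(n)}$, and $E_n$ is the unique one with two external boundary tori. Under hypothesis (ii), the existing Borromean piece $E\subseteq X$ also contributes a vertex; but its original two external tori include $\partial\nu(\partial\Sigma_1)$, which becomes internal once we Bing double, so $E$ has only one external torus in $X^{(n)}$, and $E_n$ remains the unique Borromean piece with two external boundary tori. The no 3-keychain assumption on $X$ ensures that $X^{(n)}$ has no 3-keychain piece either, since Bing doubling only introduces Borromean pieces, so no other JSJ piece of $X^{(n)}$ has the same 3-manifold type as $E_n$. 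By Theorem~\ref{thm:key-JSJ-fact}, every self-diffeomorphism of $X^{(n)}$ sends $E_n$ to itself; in particular $F$ preserves the unordered pair $\{\partial\BD^n(\Sigma)_1,\partial\BD^n(\Sigma)_2\}$, and after swapping labels if necessary we may assume that $F$ sends $\BD^n(\Sigma)_1$ to $\BD^n(\Sigma')_1$.

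Having identified compatible branching sets, we pass to the 2-fold branched cover of $B^4$ over $\BD^n(\Sigma)_1$ and the analogous cover over $\BD^n(\Sigma')_1$; both are again $B^4$, since iterated Bing doubling of a trivial disk produces trivial disks. Every remaining component of $\BD^n(\Sigma)$ has linking number zero with $\BD^n(\Sigma)_1$ in $S^3$, so the homological condition of Lemma~\ref{lemma:covering-surface-link-1} is satisfied and $F$ lifts to a diffeomorphism $\widetilde F\colon B^4\to B^4$ carrying the full preimage of $\BD^n(\Sigma)\setminus \BD^n(\Sigma)_1$ to that of $\BD^n(\Sigma')\setminus \BD^n(\Sigma')_1$. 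By Lemma~\ref{lemma:covering-link-of-BD-is-original-link}, selecting one lift of each non-branching component produces a copy of $\BD^{n-1}(\Sigma)$ (respectively $\BD^{n-1}(\Sigma')$); since any two such choices of lifts are ambiently isotopic, $\widetilde F$ yields a diffeomorphism of pairs $(B^4,\BD^{n-1}(\Sigma))\to(B^4,\BD^{n-1}(\Sigma'))$, which by the inductive hypothesis implies $\Sigma$ and $\Sigma'$ are smoothly equivalent. The hardest part of the argument is the JSJ identification of $E_n$ in the middle paragraph; once this is done, the covering-space lift and the induction reduce to assembling lemmas already established in this section.
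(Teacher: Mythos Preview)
Your first JSJ argument---that $F$ must preserve the outermost Borromean piece $E_n$ and hence the branching set---is correct and matches the paper's approach. The gap is in the final step: the assertion that ``selecting one lift of each non-branching component produces a copy of $\BD^{n-1}(\Sigma)$'' and that ``any two such choices of lifts are ambiently isotopic'' is not what Lemma~\ref{lemma:covering-link-of-BD-is-original-link} says, and it is false for $n\ge 2$.

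That lemma shows the lifts of $\BD^n(\Sigma)_3,\dots,\BD^n(\Sigma)_{n+2}$ fall into two \emph{split} copies $\widetilde\Sigma,\widetilde\Sigma'$ of $\BD^{n-1}(\Sigma)$ minus its first component, and only a \emph{coherent} choice (all of $\widetilde\Sigma$ or all of $\widetilde\Sigma'$, together with either lift of $\BD^n(\Sigma)_2$) yields a copy of $\BD^{n-1}(\Sigma)$. A mixed selection contains two components that are split in the cover but whose images downstairs are not split in $\BD^{n-1}(\Sigma)$, so it is not isotopic to $\BD^{n-1}(\Sigma)$. Now $\widetilde F$ certainly carries the full preimage to the full preimage, but nothing you have said rules out $\widetilde F$ sending the coherent half $\widetilde\Sigma$ to a mixture of the two halves on the other side.

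The paper closes this gap with a \emph{second} JSJ argument, this time on the exterior of the covering link in $S^3$. By Lemma~\ref{lemma:JSJ-tree-of-covering-link}, that exterior has a central 3-keychain piece joining two identical sub-trees, and the hypotheses (i)/(ii)---in particular the absence of 3-keychain pieces in $X$---force any self-diffeomorphism to fix or swap the two halves. This is precisely where the 3-keychain hypothesis is used; it plays no role in your first JSJ argument, since the Borromean and 3-keychain exteriors are already distinguished as hyperbolic versus Seifert fibered. Once the halves are preserved or swapped, $\widetilde F$ does carry a coherent copy to a coherent copy and the induction goes through.
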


\begin{proof}
Let $S:= \BD^n(\Sigma)$ and $S' := \BD^n(\Sigma')$.  The first components $S_1$ and $S'_1$ are both unknotted disks, since they arose from Bing doubling.
Consider the 2-fold branched coverings  $p \colon B^4 \to B^4$ and $p' \colon B^4 \to B^4$ with branching sets the first components $S_1$ and $S'_1$ of each surface link, respectively. Since first components are unknotted, the branched covering spaces are indeed again~$B^4$.

By Lemma~\ref{lemma:covering-link-of-BD-is-original-link}, $\BD^{n-1}(\Sigma)$ is a covering surface link of $S= \BD^n(\S)$. Moreover, $\BD^{n-1}(\Sigma)$ is a sublink of the inverse image $p^{-1}(S\sm S_1)$. Analogously, $\BD^{n-1}(\Sigma')$ is a sublink of the inverse image $p^{-1}(S'\sm S'_1)$.
% for $\S'$ in place of $\S$.

Let $F \colon (B^4,S) \to (B^4,S')$ be a diffeomorphism of pairs. We claim that it must send $S_1$ to either $S'_1$ or $S'_2$.

To prove the claim, we consider the JSJ decomposition and tree for $S^3 \sm \nu(\partial S)$. There is a unique univalent vertex in the JSJ tree for $S^3 \sm \nu(\partial S)$ that corresponds to a copy of the exterior~$E$ of the Borromean rings. This vertex must be sent to the analogous vertex in the JSJ tree for $S^3 \sm \nu(\partial S')$ by Theorem~\ref{thm:key-JSJ-fact}.  The two boundary components of $S^3 \sm \nu(\partial S')$ that lie in this copy of $E$ are $S'_1$ and $S'_2$. It follows that $F(S_1)$ is $S'_1$ or $S'_2$.  After potentially relabelling, we can and shall assume, without loss of generality, that the diffeomorphism $F$ in fact sends $S_1$ to $S'_1$.

% Restrict the diffeomorphism of pairs to $\BD^{n-1}(\S) \subseteq p^{-1}(\BD^n(\S))$.  This gives an equivalence of  $\BD^{n-1}(\S)$ with a sublink of $p^{-1}(\BD^n(\S))$

%Lift the diffeomorphism of pairs to a diffeomorphism of pairs $(B^4,p^{-1}(\BD^n(\S))) \to (B^4,p^{-1}(\BD^n(\S')))$.

Since the diffeomorphism $F \colon (B^4,S) \to (B^4,S')$ preserves the branching set (i.e.\ $F(S_1) = S'_1$), we may lift $F$ to a diffeomorphism of pairs
\[\wt{F} \colon (B^4,p^{-1}(\BD^n(\S))) \to (B^4,p^{-1}(\BD^n(\S'))).\]
In the inverse image, $p^{-1}(\BD^n(\S))$, we see two copies of each component of $\BD^{n-1}(\S)$.
Restrict the diffeomorphism $\wt{F}$ to \[\BD^{n-1}(\S) \subseteq p^{-1}(S \sm S_1) \subseteq p^{-1}(S) = p^{-1}(\BD^n(\S)).\]
The image is a sublink of $p^{-1}(S' \sm S'_1)$.
We claim that the image is a copy of $\BD^{n-1}(\S')$.  Assuming this claim, we are done: by induction, we see that $\S$ and $\S'$ are smoothly equivalent, as desired.

To prove the claim, we again argue by considering the effect of the diffeomorphism restricted to the link exteriors in $S^3$
\[\wt{F}| \colon S^3 \sm \nu (\partial (p^{-1}(S \sm S_1))) \to  S^3 \sm \nu (\partial (p^{-1}(S'\sm S'_1))). \]
Since the links on the boundary coincide, so do their covering links.
We will restrict how the diffeomorphism can permute components using the relative rigidity of $3$-manifold diffeomorphisms, in particular by studying the JSJ decomposition and applying Theorem~\ref{thm:key-JSJ-fact} again.

Lemma~\ref{lemma:JSJ-tree-of-covering-link} provides a description of the JSJ tree of the covering link obtained by forgetting the branching set and one of the lifts of $\partial S_2$ and $\partial S_2'$.
As above, we choose a lift of $S_2$, and then the lift of $S_2'$ is determined by the diffeomorphism $F$. The automorphisms of the JSJ tree of this covering link are given by composing automorphisms of the two copies of the JSJ tree for the exterior of $\partial S$ with either:
\begin{enumerate}[(i)]
  \item the identity, or
  \item a flip map that switches the two sub-trees that are copies of the JSJ tree for the exterior of $\partial S$.
\end{enumerate}
To see this, we consider the structure of the JSJ trees.  The JSJ tree $T(n-1)$ %needed a name; that's not a good one though
of the exterior of $\partial \BD^{n-1}(\S)$ is obtained by iterating the procedure pictured in Figure~\ref{fig:tree-plus-vE} and consists of the JSJ tree for $X$ with a tail of $(n-1)$ Borromean exteriors attached to the vertex corresponding to the JSJ piece containing $\partial \S_1$. The JSJ tree for the covering link is pictured in Figure~\ref{fig:long-stick-tree} and is obtained from two copies of  $T(n-1)$ by adding one more vertex corresponding to the 3-keychain link exterior, with two edges connecting this new vertex to the ends of both tails, as described in Lemma~\ref{lemma:JSJ-tree-of-covering-link} and shown in Figure~\ref{fig:covering-tree}.
By our assumptions on the JSJ decomposition of $X$, the Borromean and 3-keychain link exteriors cannot be replicated anywhere in the tree but at its centre, and the tails must get either fixed or swapped with each other by $\wt{F}$.  The link components which have boundaries of tubular neighborhoods that lie in JSJ pieces corresponding to one half of the tree form a copy of $\BD^{n-1}(\S)$ in the domain and form a copy of $\BD^{n-1}(\S')$ in the codomain. Determining how $\wt{F}$ acts on connected components of our surface links is enough to control how $\wt{F}$ acts on their boundary components. Therefore, we have completed the proof of the claim that the image $\wt{F} (\BD^{n-1}(\S))$ is a copy of $\BD^{n-1}(\S')$. This proves the inductive step and therefore completes the proof of the proposition.
\end{proof}

\noindent Finally, we prove the following, which is a more specific version of the statement of Theorem~\ref{thm:intro-any-no-components} in Section~\ref{section:Introduction}. The version stated in Section~\ref{section:Introduction} immediately follows.

\begin{intro-any-no-components}
Let $\Sigma$ and $\Sigma'$ be an exotic pair of Brunnian 2--component surface links constructed as in Theorem~\ref{thm:intro-disk-links} or Theorem~\ref{thm:intro-surface-links}, with orderings such that the first components are disks. For all $n\ge 3$, the $n$--component surface links $\BD^{n-2}(\Sigma)$ and $\BD^{n-2}(\Sigma')$ are an %Bing doubling the first disk component of $\Sigma$ and of $\Sigma'$ a total of $n-1$ times each yields a pair of
exotic pair of Brunnian surface links.
\end{intro-any-no-components}

%Changed to new statement, here is the old one.
%Let $n \geq 3$.  For any exotic pair $\Sigma$ and $\Sigma'$ of Brunnian $2$--component surface links from either Theorem~\ref{thm:intro-disk-links} orTheorem~\ref{thm:intro-surface-links}, let $\BD^{n-2}(\S)$ and $\BD^{n-2}(\S')$ be the pair of surface links obtained by $(n-2)$-fold iterated Bing doubling, applied with an ordering on $\Sigma$ and $\Sigma'$ for which one of the $($unknotted$)$ disks is $\Sigma_1$, respectively $\Sigma'_1$. Then $\BD^{n-2}(\S)$ and $\BD^{n-2}(\S')$ is an $n$--component Brunnian exotic pair of surface links.

%\SK{I'm not convinced that the next two paragraphs are needed. They are just a summary of where the pairs of surface links come from. The Theorems constructing them are hyperlinked, and so I'm not sure how much benefit there is? Not a strong opinion though, so feel free to ignore.}

The disk links in Theorem~\ref{thm:intro-disk-links} arose by taking two slice disks $D_1$ and $D_2$ in $B^4$ for a fixed knot $K$ in $S^3$, and taking the Bing doubles $\S := \BD(D_1)$ and $\S' := \BD(D_2)$. This is an exotic pair of Brunnian disk links. Theorem~\ref{thm:intro-any-no-components} states that iterated Bing doubling both of these links yields an exotic pair with any specified number of components.

The surface disk links in  Theorem~\ref{thm:intro-surface-links} were constructed by taking a particular 2--component surface link $\S$ consisting of a disk and a genus one surface, and applying 1-twisted rim surgery (using a knot $J$) along a curve $\alpha$ that bounds a locally flat disk in the exterior of $\S$ but does not bound any smoothly embedded disk in the exterior.  By varying $J$, we obtain exotic surface links.  Theorem~\ref{thm:intro-any-no-components} shows that applying Bing doubling to the disk component of $\S$ produces $n$--component surface links such that the same rim surgeries give rise to an infinite family of pairwise exotic surface links. \\

\noindent \textit{Proof of Theorem~\ref{thm:intro-any-no-components}.} By Lemma~\ref{lem:doubling-preserves-brunnian}, the surface links $\BD^{n-2}(\S)$ and $\BD^{n-2}(\S')$ are still Brunnian. Moreover, Lemma~\ref{lem:doubling-preserves-isotopy} (with $\CAT=\TOP$) ensures they are still topologically isotopic (we may assume the doubling is done such that the boundaries are the same link in $S^3$).

If the assumptions of Proposition~\ref{prop:exotic-implies-exotic-BD} hold, then $\BD^{n-2}(\Sigma)$ and $\BD^{n-2}(\Sigma')$ being smoothly equivalent implies that $\Sigma$ and $\Sigma'$ are too, which is a contradiction.
%If $\BD^{n-2}(\Sigma)$ and $\BD^{n-2}(\Sigma')$ were smoothly equivalent, then if it applies, Proposition~\ref{prop:exotic-implies-exotic-BD} ensures that $\S$ and $\S'$ would be smoothly equivalent, a contradiction.
Therefore we need to check that the hypotheses of the proposition hold.
For the disk links of Theorem~\ref{thm:intro-disk-links}, the JSJ decomposition of the exterior of $\partial \S = \partial \S'$ consists of a Borromean exterior and the exterior of a hyperbolic knot (Lemma~\ref{lem:hyperbolic}), and so Proposition~\eqref{item:JSJ-condition-ii}(ii) is satisfied, and therefore the proposition applies.

For the surface links of Theorem~\ref{thm:intro-surface-links}, we note that their boundary is a satellite of the knot $K$, as shown on the left of  Figure~\ref{fig:infinitefamilyJSJ}. In this case, the JSJ decomposition of $\partial S=\partial S'$ consists of the exterior of the hyperbolic (verified by SnapPy \cite{snappy}) 3--component link $L$ illustrated on the right of  Figure~\ref{fig:infinitefamilyJSJ} along with a JSJ decomposition of the exterior of $K$ in $S^3$. Since $L$ has pairwise zero linking numbers, the exterior of $L$ is not the exterior of a 3-keychain link. Moreover, as verified by SnapPy \cite{snappy}, the exterior of $L$ has symmetry group $\mathbb{Z}/2$, so is also not a Borromean rings exterior. Taking $K$ to be (for example) the positive untwisted Whitehead double of the trefoil, whose exterior admits a JSJ decomoposition consisting of a trefoil exterior and a Whitehead link exterior, we find that Proposition~\eqref{item:JSJ-condition-i}(i) is satisfied and the proposition applies. This completes the proof; see Appendix~\ref{appendixb} for further documentation of this use of SnapPy. \hfill $\Box$

\begin{figure}
\includegraphics[width=90mm]{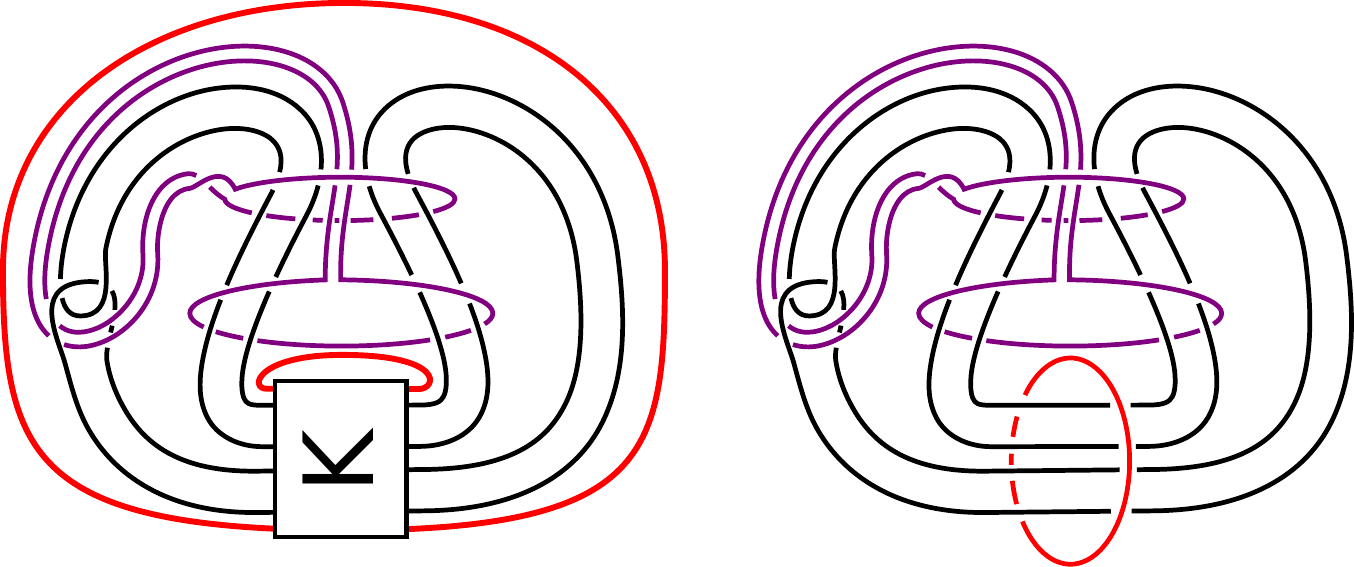}
\caption{  {Left:} an essential torus in the exterior of $\partial \S$, where $\S$ is the surface link from Theorem~\ref{thm:intro-surface-links}. This torus splits the exterior of $\partial\S$ into the exterior of $K$ glued to the exterior of a hyperbolic 3--component link $L$.   {Right:} the link $L$.} \label{fig:infinitefamilyJSJ}
\end{figure}

\section{Links of other Brunnian types}\label{section:nkBrunnian}

In this section, we apply our techniques to construct exotic surfaces of any Brunnian type, in the language of Debrunner \cite{Debrunner}.

\begin{definition}
Let $\Sigma$ be an $n$--component surface link in $B^4$.  Fix an integer $k$ with $1\le k\le n$.
%We say that $L$ is {\emph{$(n,k)$-Brunnian}} if every sublink of $L$ with fewer than $k$ components is the unlink, while every sublink of $L$ with at least $k$ components is not split.
The surface link $\Sigma$ is {\emph{$(n,k)$-Brunnian}} if every sublink of $\Sigma$ with fewer than $k$ components is an unlink of surfaces, while every sublink of $\Sigma$ with at least $k$ components is nontrivial.
\end{definition}

We adapt the Bing doubling construction of Brunnian links (in the usual sense) to produce $(n,k)$-Brunnian links.
%\footnote{Changed it away from being ``our" construction bc I don't think it was really our construction}
%Now we discuss how to form $(n,k)$-Brunnian links, given that we know how to produce Brunnian links in the usual sense.

\begin{procedure}\label{nkprocedure}{\emph{Constructing an $(n,k)$-Brunnian surface link.}} Let $D$ be a disk in $B^4$ whose boundary is a nontrivial knot in $S^3$. To simplify the exposition, we let $n_k$ denote $\binom{n}{k}$. Let $\Sigma_{n,k}(D)$ be the $n$--component surface link constructed as follows (see also Figures~\ref{fig:nkbrunnian1} and~\ref{fig:nkbrunnian2}).
\begin{enumerate}
\item For $i=1,2,\ldots,{n_k}$, let $\Sigma_i$ be a copy of $\BD^{k-1}(D)$. Arrange $\Sigma_1,\ldots,\Sigma_{n_k}$ from left to right, as ${n_k}$ split copies of $\BD^{k-1}(D)$.
\item Let $\mathcal{C}=\{C_1,\ldots, C_n\}$ indicate a set of $n$ distinct colors. For each $i$, color every component of $\Sigma_i$ a color in $\mathcal{C}$, so that no two components of $\Sigma_i$ are the same color and so that for $i\neq j$, the surface links $\Sigma_i$ and $\Sigma_j$ are not colored with the same size-$k$ subset of $\mathcal{C}$. See Figure~\ref{fig:nkbrunnian1} for an example.
\item Let $B_1,\ldots, B_{n_k}$ be disjoint 4-balls containing $\Sigma_1,\ldots, \Sigma_{n_k}$, respectively. We now band all components of $\Sigma_1\sqcup\cdots\sqcup \Sigma_{n_k}$ of each single color together to form an $n$--component link of disks, such that the bands are chosen to be trivial near their ends in the $B_i$'s and disjoint from other $B_j$'s.

Said differently: for each $i$ and $j$: when $\Sigma_i$ includes a color-$C_j$ component and there is a color-$C_j$ component in $\Sigma_s$ for some $s>i$, let $i_j=\min\{s>i\mid \Sigma_s$ has a color-$C_j$ component$\}$ and band the color-$C_j$ components in $\Sigma_i$ and $\Sigma_{i_j}$ together by a band that is trivial in $B_i$ and $B_{i_j}$ and does not intersect any other $B_l$. See Figure~\ref{fig:nkbrunnian2}.
\end{enumerate}

\begin{figure}
\labellist
\pinlabel {$\Sigma_1$} at 25 -8
\pinlabel {$\Sigma_2$} at 80 -8
\pinlabel {$\Sigma_{n_k}$} at 247 -8
\endlabellist
\includegraphics[width=120mm]{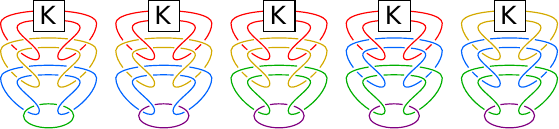}
\vspace{.15in}
\caption{To build $\Sigma_{n,k}(D)$, we start with a split union of $n_k$ copies $\Sigma_1,\ldots, \Sigma_{n_k}$ of $\BD^{k-1}(D)$. Here we draw the boundaries of $\Sigma_1\sqcup\cdots\sqcup \Sigma_{n_k}$, where $K=\partial D$. In addition, we color every component of each $\Sigma_i$ with a color in $\mathcal{C}$, such that (1) no two components of $\Sigma_i$ may be the same color, and (2) for $i\neq j$, the links $\Sigma_i$ and $\Sigma_j$ may not use the same subset of colors in $\mathcal{C}$. In this figure, $(n,k)=(5,4)$.}
\label{fig:nkbrunnian1}
\end{figure}

\begin{figure}
\labellist
\pinlabel {$B_1$} at 15 82
\pinlabel {$B_2$} at 70 82
\pinlabel {$B_{n_k}$} at 237 82
\endlabellist
\includegraphics[width=120mm]{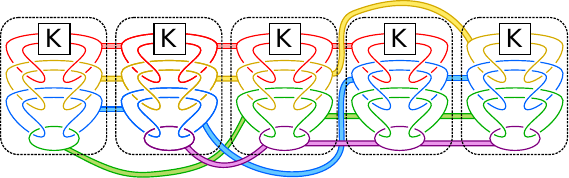}
\caption{Starting from Figure~\ref{fig:nkbrunnian1}, to complete our construction of $\Sigma_{n,k}(D)$ we band together all $C_i$--colored components of $\Sigma_1\sqcup\cdots\sqcup\Sigma_{n_k}$ for each $C_i\in\mathcal{C}$. The bands are taken to be trivial in the 4-balls $B_1,\ldots, B_{n_k}$ but otherwise may be chosen freely so long as they are disjoint. The result is an $n$--component link of disks that we call $\Sigma_{n,k}(D)$.}
\label{fig:nkbrunnian2}
\end{figure}
\end{procedure}

When restricted to $S^3$, this construction differs from Debrunner's \cite{Debrunner}; in particular, we elect to use Bing doubles to construct our generalized Brunnian links.
%, simply because we have already worked to understand Bing doubling in this paper so we elect to use Bing doubles to construct these generalized Brunnian links.
Notice that if $n=k$, $n_n =1$, hence $\Sigma_{n,n}(D) = \BD^{n-1}(D)$.  When $k=1$, $\Sigma_{n,1}(D)$ is a split union of $n$ copies of $D$. 
%MP seemed worth spelling out the extreme cases since we mention them in the introduction. 

\newpage
\begin{proposition}\label{prop:nkbrunnian}
The surface link $\Sigma:=\Sigma_{n,k}(D)$ from Procedure~\ref{nkprocedure} is $(n,k)$-Brunnian.
\end{proposition}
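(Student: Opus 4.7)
My plan is to fix a color subset $S \subseteq \mathcal{C}$ with $m := |S|$, and analyze the sublink $\Sigma_S$ of $\Sigma := \Sigma_{n,k}(D)$ indexed by $S$ in the cases $m < k$, $m = k$, and $m > k$. Two preliminary facts will be used throughout. Iterating Lemma~\ref{lem:doubling-preserves-brunnian} from the tautologically Brunnian one-component link $D$ shows that the disk link $\BD^{k-1}(D)$ is Brunnian in $B^4$, so every proper sublink of it is a split union of standardly embedded unknotted disks. Moreover, $\BD^{k-1}(K) \subseteq S^3$ is nontrivial: iterating Lemma~\ref{lemma:JSJ-trees} produces incompressible tori in $S^3 \setminus \BD^{k-1}(K)$, including one separating off the exterior of the nontrivial knot $K$, whereas the complement of an unlink is a handlebody and contains no incompressible torus. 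Since unlinked surface links have unlinked boundaries, $\BD^{k-1}(D)$ is itself not unlinked in $B^4$.

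For $m < k$, in each $B_i$ the intersection $\Sigma_S \cap B_i$ is a \emph{proper} sublink of $\Sigma_i \cong \BD^{k-1}(D)$, and hence a split union of standardly embedded disks. The full $\Sigma_S$ is assembled from these disks by attaching the subfamily of bands from Procedure~\ref{nkprocedure} whose endpoints are $S$-colored. After isotoping each $B_i$'s contribution so that its boundary lies in a small standard split union near the band endpoints, and after choosing the bands (within the constraints of Procedure~\ref{nkprocedure}) as disjoint unknotted bands in $S^3 \setminus \bigcup_j B_j$ that are pairwise unlinked, I can present $\Sigma_S$ as a standard split union of disks in $B^4$, showing that it is unlinked.

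For $m = k$, Procedure~\ref{nkprocedure} forces a unique index $i_0$ with $C(\Sigma_{i_0}) = S$, where $C(\Sigma_i)$ denotes the color set of $\Sigma_i$. In $B_{i_0}$ the sublink contains all of $\Sigma_{i_0} \cong \BD^{k-1}(D)$, while in every other $B_i$ it contains only a proper, hence unlinked, sublink of $\Sigma_i$. To show $\Sigma_S$ is nontrivial, it suffices to show its boundary $\partial \Sigma_S \subseteq S^3$ is not an unlink. This boundary is obtained from the split union of $\BD^{k-1}(K)$ (coming from $\Sigma_{i_0}$) with split unlinks (coming from the other $\Sigma_i$) via band surgery along the $S$-colored bands. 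Each such band joins the evolving link to a split unknot via a trivial unknotted band, and band-summing a link with a split unknot along a trivial band is an ambient isotopy in $S^3$: the unknot slides along the band and is absorbed into the target component without disturbing the rest of the link. Iterating, $\partial \Sigma_S$ is isotopic in $S^3$ to $\BD^{k-1}(K)$, which is nontrivial by the preliminary remark. The case $m > k$ then follows because any $k$-element subset $S' \subseteq S$ yields a sub-sublink $\Sigma_{S'}$ which is nontrivial; if $\Sigma_S$ were unlinked, every sub-sublink would also be unlinked, a contradiction.

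The main obstacle will be the band-sum reductions in the first two cases: certifying that the bands of Procedure~\ref{nkprocedure} can be laid out so that each individual band-sum with a split unknot is literally an ambient isotopy, and so that the full family of bands joining standard disks assembles into a standard split union in $B^4$. Procedure~\ref{nkprocedure} affords enough flexibility to place all bands as disjoint unknotted bands lying (say) in a single $2$-disk slice of $S^3 \setminus \bigcup_j B_j$; I expect to argue for this canonical choice rather than for all admissible placements.
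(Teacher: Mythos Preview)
Your outline is broadly workable for the proposition as stated, but it diverges from the paper's argument in two respects worth flagging.

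First, the paper does not fix a canonical band placement. Instead it runs an inductive sweep across the balls $B_1,\ldots,B_{n_k}$: in the $m<k$ case, since at least one component has been deleted from every $\Sigma_i$, the remaining disks in $B_1$ can be isotoped \emph{rel} $(\partial \overline{B_1})\cap\mathring{B}^4$ to trivial disks, which absorbs $B_1$'s contribution into the outgoing bands; one then repeats for $B_2$, and so on. This works for \emph{any} band choice allowed by Procedure~\ref{nkprocedure}, whereas your argument (as you note) commits to a single canonical placement. For $m=k$ the paper runs the same sweep inward from both ends toward the unique ball $B_m$ whose color set equals $S$.

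Second, and more consequential for the sequel: your $m=k$ step only analyzes the boundary, showing $\partial\Sigma_S$ is isotopic in $S^3$ to $\BD^{k-1}(K)$ and hence not an unlink. That suffices for Proposition~\ref{prop:nkbrunnian} itself, but the paper's sweep proves the stronger $4$-dimensional fact that every $k$-component sublink of $\Sigma_{n,k}(D)$ is smoothly isotopic in $B^4$ to $\BD^{k-1}(D)$. This stronger conclusion is used verbatim in the proof of Theorem~\ref{thm:intro-nk-brunnian}, where one needs each $k$-component sublink of $\Sigma_{n,k}(D_i)$ to be $\BD^{k-1}(D_i)$ in order to invoke Theorem~\ref{thm:intro-any-no-components}. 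If you follow your route you will have to upgrade the $m=k$ step from a $3$-dimensional boundary argument to an ambient isotopy of surface links in $B^4$; the same band-absorption idea does this, since the disks in each $B_i$ with $i\neq i_0$ are boundary-parallel and can be slid along their bands just as their boundaries can.
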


\begin{proof}
The proof of Proposition~\ref{prop:nkbrunnian} naturally divides into two claims.

%\footnote{\SK{general comment about the proof of this proposition: throughout, we say ``$=$", but to me, it seems like the correct (?) thing to say is ``$\approx$". Am I right? Are there other opinions about this? Also, can I delete ``along" in all the centered environments? We don't say that in the first paragraph, so we should pick one and be consistent.}MP:   Is $\approx$ private notation for smoothly isotopic? I think the first one is literally equals, but the rest of the time there could be a different symbol. It does say "up to smooth isotopy" a couple of times but later this peters out. Update: I have replaced some $=$ symbols with $\approx$, and I have defined this symbol where it is first used, because I don't think this usage is at all standard. I don't think the alongs are causing any harm, personally. \SK{Is it private? I thought it was common, haha. But I agree, some are $``="$ and some are $``\approx"$ so I'm happy if it's defined and then used throughtout as appropriate. I think I'm ok with the ``along"s now.}}

\begin{claim} \label{claim1}
\textit{Let $\Sigma'$ be a $(k-1)$--component sublink of $\Sigma$. Then $\Sigma'$ is an unlink of disks.}
\end{claim}

\noindent \textit{Proof of Claim~\ref{claim1}.}
%\begin{proof}
By construction, each component of $\S$ is colored by a different element of a size-$n$ set $\mathcal{C}$. Suppose $\Sigma'$ is obtained from $\Sigma$ by deleting the components of colors $\mathcal{C}_r\subseteq\mathcal{C}$, where $|\mathcal{C}_r|=n-k+1$. Since $\mathcal{C}$ has $n$ colors and each $\Sigma_i$ has $k$ distinct colors, we conclude that $\Sigma'$ is obtained from $\Sigma=\cup_{\text{bands}}\Sigma_i$ by deleting a nonzero number of components from each $\Sigma_i$.

Note that \[\Sigma'=\bigcup_{\text{along bands}}^{i\ge 1}\Sigma_i \setminus (\text{components of colors } \mathcal{C}_r).\]

In Figure~\ref{fig:nkbrunnian3} (left), we depict the ball neighborhood $B_1$ of $\Sigma_1$ (the leftmost copy of $\BD^{k-1}(D)$ in the construction of $\Sigma$). The ball $B_1$ meets the bands in $\Sigma'$, which are attached to $\Sigma_1$, trivially: all bands extend from $\Sigma_1$ towards the right. The surface $B_1\cap\S'$ is obtained from $B_1\cap \S_1$ by deleting at least one component of $\S_1$ (and any attached bands). Thus, in $B_1$, we may isotope $B_1\cap \Sigma'$ rel.\ $(\partial \overline{B_1})\cap\mathring{B}^4$ to obtain trivial disks, as indicated in Figure~\ref{fig:nkbrunnian3}. (Note that while we only draw the boundary of $\Sigma'$ in Figure~\ref{fig:nkbrunnian3}, once any component of $\Sigma'\cap B_1$ is deleted the remaining components are determined up to smooth isotopy rel.\ boundary as trivial disks in $B^4$.) We conclude \[\Sigma'\approx \bigcup_{\text{along bands}}^{i\ge 2}\Sigma_i \setminus (\text{components of colors } \mathcal{C}_r),\] where we use $``\approx"$ to denote smooth isotopy throughout.

Now proceed by induction, assuming 
\[\Sigma'\approx\bigg(\bigcup_{\text{along bands}}^{i\ge s}\Sigma_i \setminus (\text{components of colors } \mathcal{C}_r)\bigg)\cup (\text{an unlink of disks})\] 
for some $s>1$ (up to smooth isotopy). In Figure~\ref{fig:nkbrunnian3} (right), we depict the ball neighborhood $B_s$ of $\Sigma_s$. The ball $B_s$ meets bands in $\Sigma'$ trivially attached to $\Sigma_s$, with all bands extending from $\Sigma_s$ toward the right. Note that some components of $\Sigma_s$ may not have any bands extending to the right -- in the construction of $\Sigma$, this means that such a component is the last of its color. The surface $\S'\cap B_s$ is obtained from $\S_s\cup$(bands extending to the right$)$ by deleting at least one component (and any attached bands). Then in $B_s$, we may isotope $B_s\cap \Sigma'$ rel.\ $(\partial \overline{B}_s)\cap\mathring{B}^4$ to obtain trivial disks as indicated in Figure~\ref{fig:nkbrunnian3}. Note that some disks might be split in the interior of $B_s$. We conclude that
\[\Sigma'\approx \bigg(\bigcup_{\text{along bands}}^{i\ge s+1}\Sigma_i \setminus (\text{components of colors } \mathcal{C}_r)\bigg)\cup(\text{an unlink of disks}).\]
Inductively, we obtain
\begin{align*}
\Sigma'&\approx \bigg(\Sigma_{n_k}\setminus(\text{components of colors } \mathcal{C}_r)\bigg)\cup (\text{an unlink of disks})\\
&\approx \text{an unlink of disks},
\end{align*}
since $\Sigma_{n_k}$ is Brunnian and includes at least one color in $\mathcal{C}_r$. This completes the proof of the claim.
%\end{proof}

\begin{claim} \label{claim2}
\textit{Let $\Sigma''$ be a $k$--component sublink of $\Sigma$. Then $\Sigma''$ is nontrivial and in fact, $\Sigma''$ is smoothly isotopic to $\BD^{k-1}(D)$.}
\end{claim}

\noindent \emph{Proof of Claim~\ref{claim2}.}
%\begin{proof}
First, note that $\partial D$ is a nontrivial knot, so $\partial \BD^{k-1}(D)$ is not an unlink, and $\BD^{k-1}(D)$ is not an unlinked disk link.
%MP: this is part of the proof of the claim, so I moved it inside.

The proof of Claim~\ref{claim2} is similar to that of Claim~\ref{claim1}. Suppose that $\Sigma''$ is obtained from $\Sigma$ by deleting the components of colors $\mathcal{C}_r\subseteq\mathcal{C}$, where $|\mathcal{C}_r|=n-k$. Since $\mathcal{C}$ has $n$ colors and each $\Sigma_i$ has $k$ distinct colors, we conclude that $\Sigma''$ is obtained from $\Sigma=\cup_{\text{bands}}\Sigma_i$ by deleting a nonzero number of components from each $\Sigma_i$ for all $i\neq m$, where $m$ is the unique integer such that $\Sigma_m$ is colored with all of the colors in $\mathcal{C}-\mathcal{C}_r$. Again, we write \[\Sigma''=\bigcup_{\text{along bands}}^{i\ge 1}\Sigma_i\setminus (\text{components of colors } \mathcal{C}_r).\]

Applying the proof of the previous claim applied to $\Sigma_1$ and working from left to right, \[\Sigma''\approx\bigg(\bigcup_{\text{along bands}}^{i\ge m}\Sigma_i \setminus(\text{components of colors } \mathcal{C}_r)\bigg)\cup(\text{an unlink of disks}).\] 
By horizontally mirroring the argument and applying it to $\Sigma_{n_k}$ and working from right to left, we find
\begin{align*}\Sigma''&\approx\Big(\Sigma_m \setminus (\text{components of colors } \mathcal{C}_r)\Big)\cup(\text{an unlink of disks})\\
&\approx \Sigma_m\cup\text{an unlink of disks}\\
&\approx \Sigma_m \approx \BD^{k-1}(D),
\end{align*}
since $\Sigma''$ and $\Sigma_m$ each have $k$ components. Since $\BD^{k-1}(D)$ is a nontrivial disk link, this completes the proof of the claim.\\

%\end{proof}
This completes the proof of the proposition.
\end{proof}

\begin{figure}
\labellist
\pinlabel {\LARGE$\sim$} at 60 35
\pinlabel {\LARGE$\sim$} at 213 35
\endlabellist
\includegraphics[width=122mm]{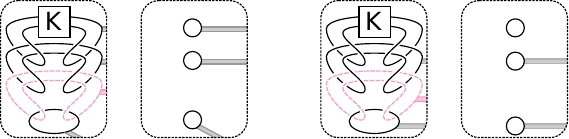}
\caption{Left: The ball $B_1$ intersecting $\Sigma'$. If we delete at least one component, e.g.\ the one highlighted, then we obtain a trivial link of disks with trivial bands extending out of $B_1$. Right: The ball $B_s$ intersecting $\Sigma'$ after applying an inductive hypothesis in the proof of Proposition~\ref{prop:nkbrunnian}. If we delete at least one component, e.g.\ the one highlighted, then we obtain a trivial link of disks with trivial bands extending from some components out of $B_s$.}
\label{fig:nkbrunnian3}
\end{figure}

\begin{nkbrunnian}
For any integers $n$ and $k$ with $n \geq 2$ and $1\le k\le n$, there exists a pair of $(n,k)$--Brunnian disk links in $B^4$ forming an exotic pair.
\end{nkbrunnian}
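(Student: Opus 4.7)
The plan is to apply Procedure~\ref{nkprocedure} to the slice disks $D_1$ and $D_2$ from Theorem~\ref{thm:intro-disk-links}, forming disk links $\Sigma_{n,k}(D_1)$ and $\Sigma_{n,k}(D_2)$. Proposition~\ref{prop:nkbrunnian} immediately guarantees that both of these links are $(n,k)$-Brunnian, so it remains to show they are topologically ambient isotopic rel.\ boundary but smoothly inequivalent.

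For the topological isotopy, Proposition~\ref{prop:initial-disks} establishes that $D_1$ and $D_2$ are topologically ambient isotopic rel.\ boundary. Iterating Lemma~\ref{lem:doubling-preserves-isotopy} with $\CAT = \TOP$ extends this to a topological ambient isotopy rel.\ boundary between $\BD^{k-1}(D_1)$ and $\BD^{k-1}(D_2)$. Since the bands attached in Procedure~\ref{nkprocedure} depend only on the boundary data in $S^3$, and these boundaries agree, choosing identical bands for the two constructions yields topologically ambient isotopic (rel.\ boundary) surface links $\Sigma_{n,k}(D_1)$ and $\Sigma_{n,k}(D_2)$; one can confine all isotopies to the $4$-balls $B_i$ in which the bands are trivial so that no interference with the bands occurs.

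The heart of the argument is the smooth inequivalence. Suppose toward contradiction that $F \colon (B^4, \Sigma_{n,k}(D_1)) \to (B^4, \Sigma_{n,k}(D_2))$ is a diffeomorphism of pairs. Fix any $k$-component sublink $T_1 \subseteq \Sigma_{n,k}(D_1)$, and let $T_2 := F(T_1)$, which is automatically a $k$-component sublink of $\Sigma_{n,k}(D_2)$ since $F$ permutes components. By the second claim in the proof of Proposition~\ref{prop:nkbrunnian}, every $k$-component sublink of $\Sigma_{n,k}(D_i)$ is smoothly ambient isotopic in $B^4$ to $\BD^{k-1}(D_i)$. Composing $F$ with the time-$1$ maps of these two ambient isotopies therefore produces a diffeomorphism of pairs $(B^4, \BD^{k-1}(D_1)) \to (B^4, \BD^{k-1}(D_2))$. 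For $k=1$ this contradicts the result of \cite{Hayden} that $D_1$ and $D_2$ are smoothly inequivalent; for $k=2$ it contradicts Theorem~\ref{thm:intro-disk-links}; and for $k \geq 3$, since $\BD^{k-1}(D_i) = \BD^{k-2}(\BD(D_i))$ is an iterated Bing double of one of the exotic 2-component disk links of Theorem~\ref{thm:intro-disk-links}, it contradicts Theorem~\ref{thm:intro-any-no-components}(i).

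The main conceptual content is already packaged into Proposition~\ref{prop:nkbrunnian} and Theorem~\ref{thm:intro-any-no-components}; the contribution here is organizational, recognizing that Procedure~\ref{nkprocedure} interpolates cleanly between the extreme cases $k=1$ and $k=n$ and that smoothly reducing to a $k$-component sublink extracts the needed contradiction uniformly in $k$. The one step requiring mild care is the sublink reduction: $F$ is a self-diffeomorphism of $B^4$ as a whole, so the ambient isotopies used to identify $T_i$ with $\BD^{k-1}(D_i)$ must be inserted on each side without regard for the remaining components, which is automatic because a diffeomorphism of pairs of a surface link imposes no constraint on the complement of any sublink we choose to isolate.
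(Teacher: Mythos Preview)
Your proof is correct and follows essentially the same approach as the paper: apply Procedure~\ref{nkprocedure} to $D_1$ and $D_2$, invoke Proposition~\ref{prop:nkbrunnian} for the $(n,k)$-Brunnian property, use Lemma~\ref{lem:doubling-preserves-isotopy} and the common band choices for the topological isotopy, and obtain the smooth obstruction by restricting a hypothetical diffeomorphism to a $k$-component sublink and applying the second claim in the proof of Proposition~\ref{prop:nkbrunnian}. Your explicit case split at the end ($k=1$, $k=2$, $k\ge 3$) is slightly more careful than the paper's single citation of Theorem~\ref{thm:intro-any-no-components}, but the content is identical.
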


\begin{proof}
Let $D_1$ and $D_2$ be the exotic pair of disks of Proposition~\ref{prop:initial-disks}. Recall that $D_1$ and $D_2$ are topologically isotopic rel.\ boundary. We construct  $\Sigma:=\Sigma_{n,k}(D_1)$ and $\Sigma':=\Sigma_{n,k}(D_2)$ as in Procedure~\ref{nkprocedure}, such that all choices of colors and bands agree in $S^3$, and $\partial \Sigma=\partial \Sigma'$. By Proposition~\ref{prop:nkbrunnian}, $\Sigma$ and $\Sigma'$ are $(n,k)$-Brunnian.

The disks $D_1$ and $D_2$ are topologically isotopic rel.\ boundary; applying Lemma~\ref{lem:doubling-preserves-isotopy}, so are $\BD^{k-1}(D_1)$ and $\BD^{k-1}(D_2)$. As $\S$ and $\S'$ are obtained by banding together copies of $\BD^{k-1}(D_1)$ and $\BD^{k-1}(D_2)$ with the same choices of bands in $S^3$, we conclude that $\S$ and $\S'$ are topologically isotopic rel.\ boundary.

Suppose $f\colon (B^4,\S)\to (B^4,\S')$ is a diffeomorphism, and let $\S_k$ be a $k$--component sublink of $\S$. Thus, $f(\S_k)$ is a $k$--component sublink of $\S'$. By the proof of Claim~\ref{claim2} within Proposition~\ref{prop:nkbrunnian}, we know that $\S_k$ is smoothly isotopic to $\BD^{k-1}(D_1)$ and $f(\S_k)$ is smoothly isotopic to $\BD^{k-1}(D_2)$. This contradicts Theorem~\ref{thm:intro-any-no-components}; we conclude there is no diffeomorphism from $(B^4,\S)$ to $(B^4,\S')$.
\end{proof}

\section{Closed surfaces}\label{section:closed-surfaces}

We show that in some cases, our exotic Brunnian surface links may be promoted to exotic links of closed surfaces in $4$-manifolds with positive second Betti number.

\begin{theorem}\label{thm:trace}
Let $\S_1$ and $\S_2$ be the 2--component exotic disk links in $B^4$ from Proposition~\ref{prop:base-case}. Fix a non-negative integer $n$. Let $X^4$ be the 4-manifold obtained from $B^4$ by attaching a 0-framed 2-handle along every boundary component of $\BD^n(\S_i)$. Let $S_i\subseteq X$ be the $(n+1)$--component link of 2-spheres obtained from $\BD^n(\S_i)$ by gluing a core of each 2-handle to the corresponding component of $\BD^n(\S_i)$. Then $S_1$ and $S_2$ are topologically isotopic in $X$, but $(X,S_1)$ is not diffeomorphic to $(X,S_2)$.
\end{theorem}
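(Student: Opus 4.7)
For topological isotopy, I would iterate Lemma~\ref{lem:doubling-preserves-isotopy} in the $\TOP$ category, starting from the topological ambient isotopy rel.\ boundary between $D_1$ and $D_2$ given by Proposition~\ref{prop:initial-disks}, to obtain a topological ambient isotopy of $B^4$ carrying $\BD^n(\Sigma_1)$ to $\BD^n(\Sigma_2)$ rel.\ boundary. Since such an isotopy restricts to the identity on $\partial B^4 = S^3$, it extends by the identity across each attached $0$-framed $2$-handle to a topological ambient isotopy of $X$. The extension fixes each $2$-handle core pointwise, so it carries $S_1 = \BD^n(\Sigma_1) \cup (\text{cores})$ to $S_2 = \BD^n(\Sigma_2) \cup (\text{cores})$.

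For smooth non-equivalence, suppose for contradiction that $f \colon (X, S_1) \to (X, S_2)$ is a diffeomorphism of pairs. Each sphere $S_i^j$ has self-intersection $0$: its natural $4$-dimensional neighborhood $\overline{\nu} D_j \cup h_j \subseteq X$ is diffeomorphic to $S^2 \times D^2$ via the gluing $D^2 \times D^2 \cup_{S^1 \times D^2} D^2 \times D^2$. Since $\pi_2(SO(2)) = 0$, the framing of this trivial normal bundle is unique, so there is a canonically defined $0$-surgery on each $S_i^j$. Let $X^{(0)}_i$ denote the $4$-manifold obtained by performing $0$-surgery on every component of $S_i$; then $\partial X^{(0)}_i = \partial X$, and $f$ lifts (via uniqueness of tubular neighborhoods and framings) to a diffeomorphism $X^{(0)}_1 \cong X^{(0)}_2$ rel.\ $\partial X$. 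The crux is then the identification $X^{(0)}_i \cong B^4 \setminus \nu \BD^n(\Sigma_i)$, which immediately yields $B^4 \setminus \nu \BD^n(\Sigma_1) \cong B^4 \setminus \nu \BD^n(\Sigma_2)$, contradicting Proposition~\ref{prop:bingcomp}.

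To verify the identification, I would argue componentwise: removing the natural neighborhood $\overline{\nu} D_j \cup h_j$ from $X$ returns precisely $B^4 \setminus \nu D_j$ (with the remaining disks and $2$-handles unaffected), and replacing this $S^2 \times D^2$ with $D^3 \times S^1$ matches the standard local model of $0$-surgery on the unknotted sphere $S^2 \times \{0\} \subseteq S^2 \times D^2$, which yields $D^3 \times S^1 \cong B^4 \setminus \nu(\text{unknotted disk})$. Iterating over every $j$ produces the desired global identification, by locality of the construction around each sphere. The main technical obstacle is that the natural neighborhood $\overline{\nu} D_j \cup h_j$ meets $\partial X$ nontrivially (along the free boundary $D^2 \times S^1$ of $h_j$), so it is not a proper tubular neighborhood of $S_i^j$; I would reconcile this by isotoping to a proper tubular neighborhood disjoint from $\partial X$ and carefully tracking how the $D^3 \times S^1$ filling collars back to reproduce the disk exterior with its correct boundary structure, handling first the model case of a single slice disk and its $0$-trace before extending componentwise.
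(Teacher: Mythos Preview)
Your argument is correct and follows essentially the same strategy as the paper: extend the rel.\ boundary topological isotopy of the disk links over the $2$-handles, and for the smooth inequivalence surger out the spheres to recover the disk link exteriors and invoke Proposition~\ref{prop:bingcomp}. The paper dispatches the identification $X^{(0)}_i \cong B^4 \setminus \nu \BD^n(\Sigma_i)$ in one line via the standard Kirby calculus $0$-to-dot swap, whereas you unpack the local model by hand; both are fine. One small overstatement: your lifted diffeomorphism $X^{(0)}_1 \cong X^{(0)}_2$ need not be rel.\ $\partial X$, since $f$ is not assumed to fix $\partial X$, but this is irrelevant because Proposition~\ref{prop:bingcomp} only asserts that the complements are non-diffeomorphic, not non-diffeomorphic rel.\ boundary.
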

\begin{proof}
Since $S_1$ and $S_2$ agree outside $\BD^n(\S_1)$ and $\BD^n(\S_2)$, which are topologically isotopic rel.\ boundary, it is immediate that $S_1$ and $S_2$ are topologically isotopic.

Let $Y^4_i$ be the 4-manifold obtained from $X$ by surgering each component of $S_i$ (i.e.\ for each component $F$ of $\S_i$, delete $\nu(F)\cong S^2\times \mathring{D}^2$ and reglue $B^3\times S^1$ in order to obtain $Y_i$). We claim that $Y_i\cong B^4\setminus \nu(\BD^n(\S_i))$. This can be seen via Kirby calculus~\cite[Section~5.4]{GompfStipsicz}: attaching a 0-framed 2-handle along an unknot and then surgering along a 2-sphere consisting of a trivial disk in $B^4$ glued to a core of that 2-handle has the effect in a Kirby diagram of first adding a 0-framed circle (the 2-handle) and then changing the 0 to a dot. The latter represents the result of carving out a trivial disk from $B^4$ with the specified boundary.
To see that the 0-dot exchange on an unknot realizes the desired surgery, note that surgery on $S^2 \times \{0\} \subseteq S^2 \times D^2$ yields $S^2 \times S^1 \times [0,1] \cup_{S^1 \times S^1 \times \{1\}} B^3 \times S^1 \cong B^3 \times S^1$. 

If $(X,S_1)$ were diffeomorphic to $(X,S_2)$, then $Y_1$ would be diffeomorphic to $Y_2$. But Proposition~\ref{prop:bingcomp} says precisely that $Y_1$ is not diffeomorphic to $Y_2$, so we conclude that $(X,S_1)$ and $(X,S_2)$ are not diffeomorphic.
\end{proof}

In the 2--component case we can obtain the analogous result to Theorem~\ref{thm:trace} in a closed 4-manifold $Z$.

\begin{theorem}
The 2--component disk links $\S_1 =\BD(D_1)$ and $\S_2 =\BD(D_2)$ from Proposition~\ref{prop:base-case}  smoothly embed into 2--component sphere links $S_1$ and $S_2$, respectively, in a closed 4-manifold $Z$ such that
\begin{enumerate}[label=(\roman*)]
\item  $S_1$ and $S_2$ are topologically isotopic in $Z$, and
\item  $(Z,S_1)$ is not diffeomorphic to $(Z,S_2)$.
\end{enumerate}
\end{theorem}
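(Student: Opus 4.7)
The plan is to extend Theorem~\ref{thm:trace} to the closed setting. Let $X$ denote the trace of $0$-surgery on $\BD(K) = \partial \BD(\S_i)$ in $B^4$, so that the $2$-component sphere links $S_i \subseteq X$ arise as $\BD(\S_i)$ capped off by cores of the attached $0$-framed $2$-handles, exactly as in Theorem~\ref{thm:trace}. To produce $Z$, I would choose a compact oriented $4$-manifold $N$ with $\partial N \cong -\partial X$; such an $N$ exists since every closed oriented $3$-manifold bounds a compact oriented $4$-manifold. Set $Z := X \cup_{\partial X} N$. Part (i) then follows from Theorem~\ref{thm:intro-disk-links}: any topological ambient isotopy from $\BD(\S_1)$ to $\BD(\S_2)$ rel.\ boundary extends to a topological ambient isotopy of pairs in $Z$ by the identity on $N$ and on the capping cores.

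For (ii), I would argue by contradiction, mimicking the surgery argument of Theorem~\ref{thm:trace}. A diffeomorphism of pairs $F \colon (Z, S_1) \to (Z, S_2)$ descends, via simultaneous surgery on each sphere component, to a diffeomorphism between closed $4$-manifolds $Z_1$ and $Z_2$. A Kirby-calculus computation identical to that in the proof of Theorem~\ref{thm:trace} identifies $Z_i \cong Y_i \cup_{\partial X} N$, where $Y_i := B^4 \setminus \nu\BD(\S_i)$. The distinguishing ingredient is expected to be the curve $\gamma \subseteq S^3 \setminus \nu\BD(K) \subseteq \partial Y_i \subseteq Z_i$ from Proposition~\ref{prop:base-case}: since $\gamma$ bounds a smooth disk in $Y_2$, it also bounds one in $Z_2$, whereas the aim is to show that $\gamma$ still does not bound a smooth disk in $Z_1$. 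An isotopy-rigidity argument (following the JSJ and Mostow rigidity analysis in the proofs of Proposition~\ref{prop:base-case} and Proposition~\ref{prop:bingcomp}) applied to the boundary of a neighborhood of $\partial X$ inside $Z$ then ensures that $F$ sends $\gamma$ to itself up to isotopy, producing the desired contradiction.

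The main obstacle is choosing $N$ so that the Stein-fillability obstruction of Proposition~\ref{prop:base-case} survives after closing the boundary. Recall that attaching to $Y_1$ the three $-1$-framed $2$-handles of Figure~\ref{fig:add-handles} together with a hypothetical $-1$-framed $2$-handle along $\gamma$ produces a $-1$-sphere inside a $4$-manifold $W$ that admits a Stein structure, contradicting \cite{lisca-matic}. To preserve this obstruction for $Z_1$, one would take $N$ to be a Kirby-diagrammatically explicit handlebody whose handles can be attached along Legendrian curves with framings equal to $tb - 1$, as in \cite{yasha:stein}, compatibly with the Legendrian handle structure appearing in Figures~\ref{fig:add-handles} and~\ref{fig:BDhandles}. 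A direct Kirby-calculus simplification analogous to those figures would then verify that the $4$-manifold obtained from $Z_1$ by attaching the extra handles is Stein and thus cannot contain a smooth $-1$-sphere. The technical heart of the proof is therefore the explicit selection of $N$ and the verification that the combined handle diagram admits a Stein structure; this is where I expect the bulk of the work to lie.
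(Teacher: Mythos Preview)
Your overall strategy---cap off to a closed $Z$, surger along the $S_i$, and distinguish the resulting closed manifolds via the curve $\gamma$---matches the paper's, but there is a genuine gap in how you propose to choose the cap $N$ and in the rigidity step.

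First, the rigidity argument is both unnecessary and ill-posed in the closed setting. Once you surger $Z$ along $S_i$ to get closed manifolds $Z_i$, a diffeomorphism $(Z,S_1)\to(Z,S_2)$ gives a diffeomorphism $Z_1\to Z_2$ of abstract $4$-manifolds; you win as soon as $Z_1\not\cong Z_2$. There is no need to track $\gamma$ through the diffeomorphism, and indeed you cannot: $\partial X$ is not a canonical hypersurface in $Z$, so there is no reason $F$ should preserve it or the isotopy class of $\gamma$ inside it. The JSJ/Mostow argument from Proposition~\ref{prop:base-case} lived on a fixed boundary $3$-manifold and does not transplant here.

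Second, your plan to build $N$ by Stein handle attachment cannot work as stated: $Z_1$ is closed, so it cannot be Stein, and ``attaching extra handles to $Z_1$'' has no meaning. The paper avoids this by using the Lisca--Mati\'c embedding theorem \cite{lisca-matic:embed}: the Stein domain $W$ (obtained from $Y_1=B^4\setminus\nu\BD(D_1)$ by the three $2$-handles of Figure~\ref{fig:add-handles}) embeds holomorphically in a closed \emph{minimal} K\"ahler surface $Q$. One then takes the cap to be $C:=Q\setminus Y_1$ and sets $Z=X\cup C$. Surgering $Z$ along $S_1$ returns exactly $Q$, which is minimal and hence contains no smooth $(-1)$-sphere. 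Surgering along $S_2$ gives $Y_2\cup C$, and since $\gamma$ bounds a smooth disk in $Y_2$ and $C$ contains the $(-1)$-framed $2$-handle along $\gamma$, this manifold \emph{does} contain a smooth $(-1)$-sphere. Thus $Z_1\not\cong Z_2$. The point you are missing is that the correct closed-manifold obstruction is minimality of a K\"ahler (or symplectic) structure, not Stein-ness, and Lisca--Mati\'c provides the cap abstractly rather than by an explicit handle construction.
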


\begin{proof}
Much as in the proof of Theorem~\ref{thm:trace}, we begin by attaching 0-framed 2-handles to $B^4$ along the two unknotted components of the link $\BD(K)$, where $K$ is the underlying slice knot from the proof of Proposition~\ref{prop:base-case}. Denote this 4-manifold by $X$. As in the proof of Theorem~\ref{thm:trace}, the disk links $\BD(D_1)$ and $\BD(D_2)$ give rise to a pair of exotic sphere links  $S_1$ and $S_2$ in $X$ such that surgering $S_i$ turns $X$ into the disk link exterior $B^4 \sm \nu(\BD(D_i))$.

Our strategy will be to embed $X$ into a closed 4-manifold $Z$ by constructing a well-chosen 4--dimensional ``cap'' $C$ with $\partial C = \partial X$ (but oriented so that $\partial C = -\partial X$). The induced sphere links $S_1$ and $S_2$ will remain topologically isotopic in the larger 4-manifold $Z$. To show that $S_1$ and $S_2$ also remain smoothly distinct, we will distinguish the 4-manifolds  $B^4 \sm \nu(\BD(D_1)) \cup C$ and $B^4 \sm \nu(\BD(D_2)) \cup C$ obtained by surgering $Z$ along $S_1$ and $S_2$, respectively. The cap $C$ will be constructed with this goal in mind.

To this end, recall from the proof of Proposition~\ref{prop:base-case} that we can attach three 2-handles to $B^4 \sm \nu(\BD(D_1))$ to produce a 4-manifold $W$ that admits a Stein structure; see Figure~\ref{fig:add-handles}. For later use, we note that one of these is a $-1$-framed 2-handle attached along the curve $\gamma$ from Figure~\ref{fig:BD-exteriors}. By \cite{lisca-matic:embed}, we can then further embed the Stein domain $W$ into a closed, minimal K\"ahler surface, which we denote by $Q$; here \emph{minimality} implies that $Q$ contains no smoothly embedded 2-spheres of self-intersection $-1$. We define the desired cap $C$ to be the exterior of $B^4 \sm \nu(\BD(D_1))$ in $Q$, and let $Z=X \cup C$ as discussed above.

It remains to distinguish the 4-manifolds obtained from $Z$ by surgering $S_1$ and $S_2$.  The resulting 4-manifolds are $B^4 \sm \nu(\BD(D_1)) \cup C$ (which is the minimal K\"{a}hler surface $Q$) and $B^4 \sm \nu(\BD(D_2)) \cup C$.  To distinguish them, we first recall that the curve $\gamma$ bounds a smoothly embedded disk in the complement of $\BD(D_2)$. After gluing $C$ to $B^4 \sm \nu(\BD(D_2))$, this disk can be glued to the core of the 2-handle attached along $\gamma$ to yield a smoothly embedded 2-sphere of self-intersection $-1$ in $B^4 \sm \nu(\BD(D_2)) \cup C$. It follows that surgering $Z$ along $S_1$ and $S_2$ yields distinct 4-manifolds, so the pairs $(Z,S_1)$ and $(Z,S_2)$ are not diffeomorphic.
\end{proof}

\bigskip

\appendix

\section{The isometry group of $K$}\label{appendixa}

We began by drawing $K$ in SnapPy's link editor, as depicted in Figure~\ref{fig:snappy}, and extracted a Dowker-Thistlethwaite code for this knot projection:
\begin{align*}
\text{DT: }
[(&-70,20,-84,-82,-80,42,-32,56,-66,2,-44,18,36,-52,40,76,-16,-64,28,-60,-78,
\\
& \quad 68,-22,-12,-30,62,-26,-74,14,50,-38,54,72,46,-4,-24,34,-58,48,-10,-8,-6)]
\end{align*}

\begin{figure}[h]\center
\includegraphics[width=.4\linewidth]{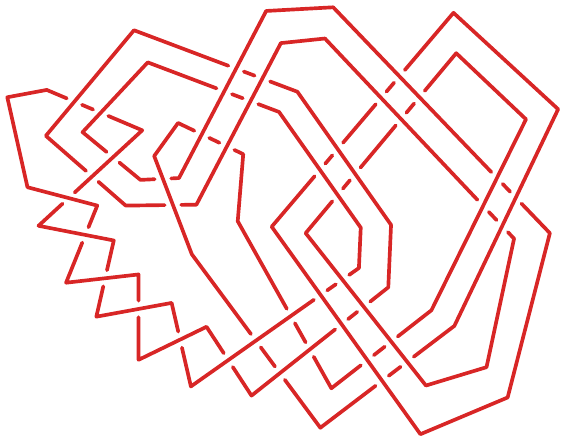}
\caption{}\label{fig:snappy}
\end{figure}

Running SnapPy inside Sage, we then verified that $K$ is a hyperbolic knot whose complement has trivial isometry group.

%DT: [(-70,20,-84,-82,-80,42,-32,56,-66,2,-44,18,36,-52,40,76,-16,-64,28,-60,-78,68,-22,-12,-30,62,-26,-74,14,50,-38,54,72,46,-4,-24,34,-58,48,-10,-8,-6)]

%PD:[(1,19,2,18),(19,3,20,2),(17,23,18,22),(12,31,13,32),(4,84,5,83),(82,6,83,5),(6,82,7,81),(80,8,81,7),(8,80,9,79),(78,10,79,9),(75,31,76,30),(70,23,71,24),(84,69,1,70),(68,3,69,4),(16,65,17,66),(71,65,72,64),(13,57,14,56),(74,57,75,58),(55,15,56,14),(52,25,53,26),(54,73,55,74),(53,63,54,62),(26,51,27,52),(61,51,62,50),(48,29,49,30),(49,59,50,58),(46,11,47,12),(47,77,48,76),(41,11,42,10),(40,77,41,78),(39,29,40,28),(38,59,39,60),(27,37,28,36),(60,37,61,38),(32,15,33,16),(35,25,36,24),(33,73,34,72),(34,63,35,64),(44,21,45,22),(45,67,46,66),(20,43,21,44),(67,43,68,42)]

\bigskip

\noindent {\tt
sage: import snappy
}

\smallskip
\smallskip

\noindent{\tt
sage: K = snappy.Manifold(`DT:[(-70,20,-84,-82,-80,42,-32,56,-66,2,-44,18,36,}

{\tt \qquad \  -52,40,76,-16,-64,28,-60,-78,68,-22,-12,-30,62,-26,-74,14,50,-38,54,72,}

{\tt \qquad \ 46,-4,-24,34,-58,48,-10,-8,-6)]')
}

\smallskip
\smallskip

\noindent{\tt
sage: K.solution\_type()
\\
`all tetrahedra positively oriented'}

\smallskip
\smallskip

\noindent{\tt sage: K.verify\_hyperbolicity() %\qquad \# This verifies the hyperbolicity of the knot complement.
\\
(True,
 [0.69085717467? + 0.50830991237?*I,
  -0.09695795674? + 0.91647294852?*I,\\
  0.97074783390? + 0.28150095915?*I,
  1.26374636309? + 0.55883319058?*I, \\
  0.13653313161? + 0.57669418470?*I,
  0.18676207068? + 0.42212622171?*I,\\
  1.71203281718? + 0.96878529641?*I,
  0.16077780284? + 2.5757092416?*I, \\
  -0.35397677376? + 0.58988805254?*I,
  -0.03195090969? + 1.66462568439?*I, \\
  -0.7299955531? + 1.9313492842?*I,
  0.16909543612? + 0.10116074904?*I, \\
  -1.1477637844? + 1.2694777149?*I,
  0.23479520751? + 0.83739365476?*I,\\
  -0.6690405348? + 0.68508313707?*I,
  1.6959300535? + 2.5175042223?*I,\\
  -0.9638799118? + 2.3142283467?*I,
  1.11640893998? + 0.55523026497?*I,\\
  0.7891261692? + 1.31673271484?*I,
  0.98323205817? + 1.45469490101?*I,\\
  -0.56189716658? + 0.44115682612?*I,
  -0.4883656525? + 2.1993281961?*I,\\
  0.19274256226? + 0.71697136107?*I])}

  \smallskip
  \smallskip

 \noindent {\tt
 sage: R = K.canonical\_retriangulation(verified = True)}

 \smallskip
 \smallskip

 \noindent {\tt
sage: len(R.isomorphisms\_to(R)) \qquad \#This gives the size of the isometry group.
\\
1}

\bigskip

\noindent The size of the isometry group is 1, so the identity is the unique isometry of $S^3 \setminus K$.

\section{The isometry group of $L$}
\label{appendixb}

As in Appendix~\ref{appendixa}, we began by drawing $L$ in SnapPy's link editor, as depicted in Figure~\ref{fig:linkL}, and extracted a Dowker-Thistlethwaite code for this knot projection:
\begin{align*}
\text{DT: }
[(&16,34,-64,54,40,68,-44,-60,32,50,42,-66,-38,-56,70,-48),
\\
&\quad (30,-58,8,-24,-12,20,52,-2,-62,-14,-36,-26,6,46,18),(-4,28,10,-22)]
\end{align*}

\begin{figure}[h]\center
\includegraphics[width=.3\linewidth]{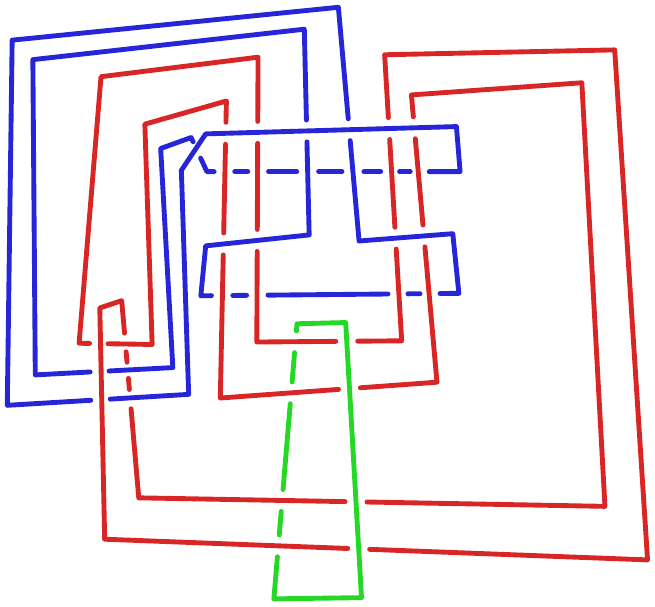}
\caption{}\label{fig:linkL}
\end{figure}

Running SnapPy inside Sage, we then verified that $L$ is a hyperbolic link whose complement has isometry group $\mathbb{Z}/2\mathbb{Z}$.

\bigskip

\noindent {\tt
sage: import snappy
}

\smallskip
\smallskip

\noindent{\tt
sage: L = snappy.Manifold(`DT:[(16,34,-64,54,40,68,-44,-60,32,50,42,-66,-38,-56,}

{\tt \qquad \ 70,-48), (30,-58,8,-24,-12,20,52,-2,-62,-14,-36,-26,6,46,18),(-4,28,10,}

{\tt \qquad \ -22)]')
}

\smallskip
\smallskip

\noindent{\tt
sage: L.solution\_type()
\\
`all tetrahedra positively oriented'}

\smallskip
\smallskip

\noindent{\tt sage: L.verify\_hyperbolicity() %\qquad \# This verifies the hyperbolicity of the knot complement.
\\
(True,
  [0.63324006006? + 1.10322026006?*I,
  0.10993343042? + 0.47600086029?*I,\\
  0.16413577877? + 0.79246676123?*I,
  0.66490644413? + 1.23999412266?*I,\\
  0.38972085196? + 0.84475722622?*I,
  -0.02052870871? + 0.71226998786?*I,\\
  -1.09458686743? + 1.17956342451?*I,
  1.08258208868? + 0.80923399264?*I,\\
  0.07133786888? + 0.68448305451?*I,
  0.63642037823? + 0.39539813066?*I,\\
  0.31403980641? + 1.10759837099?*I,
  0.40489568556? + 0.46123907137?*I,\\
  0.16243243753? + 0.76871625189?*I,
  0.38972085196? + 0.84475722622?*I,\\
  0.66413540183? + 0.62635898843?*I,
  0.50355806795? + 0.72289062001?*I,\\
  0.50203604063? + 0.63579926026?*I,
  1.13580695569? + 1.15030349850?*I,\\
  0.07522244611? + 0.83274531566?*I,
  -0.09975044831? + 0.76471739458?*I,\\
  -0.38903974680? + 0.93113075790?*I,
  0.20490074235? + 1.28331759380?*I,\\
  1.63503890307? + 1.75198642689?*I,
  0.25525121024? + 0.99618331677?*I,\\
  0.38848526399? + 0.87511804068?*I,
  0.81713558269? + 0.50449818986?*I,\\
  0.33037527178? + 0.55709566053?*I,
  0.6350389031? + 1.75198642689?*I,\\
  0.33037527178? + 0.55709566053?*I])}

  \smallskip
  \smallskip

 \noindent {\tt
 sage: R = L.canonical\_retriangulation(verified = True)}

 \smallskip
 \smallskip

 \noindent {\tt
sage: len(R.isomorphisms\_to(R)) \qquad \#This gives the size of the isometry group.
\\
2}

\bigskip
 \rm

\noindent We conclude that $L$ is hyperbolic and not the Borromean rings.

\bibliographystyle{alpha}
\bibliography{biblio-exotic-surfaces}

%\vspace{1in}

%\section*{Epilogue}

%{\center{

%\centerline{\begin{minipage}{.4\linewidth}%\raggedleft

%\textsc{Epilogue}  \vspace{1em}

%\noindent\textit{Peculiar surfaces live in $B^4$,\\
%delicately entangled sorts.\\
%Drab if untethered\\
%but exotic together.\\
%Bing doubling yields them in scores!}
%\end{minipage}}

%}}

 %\begin{minipage}{.4\linewidth}%\raggedleft

%\textsc{Epilogue}  \vspace{1em}

%\noindent\textit{Peculiar surfaces live in $B^4$,\\
%delicately entangled sorts.\\
%Drab if untethered\\
%but exotic together.\\
%Bing doubling yields them in scores!}
%\end{minipage}

%\vspace{.5in}
\renewcommand*\footnoterule{}
\newcommand\blfootnote[1]{%
  \begingroup
  \renewcommand\thefootnote{}\footnote{#1}%
  \addtocounter{footnote}{-1}%
  \endgroup
}
\blfootnote{\hspace{-0.4cm}\large\textsc{Epilogue}  \vspace{1em}

\normalsize\noindent\textit{Peculiar surfaces live in $B^4$,\\
delicately tangled sorts:\\
drab if untethered,\\
but exotic together.\\
Bing doubling yields them in scores!}}

\end{document}